\newcommand\del[1]{}
\newcommand\cadlag{c{\`a}dl{\`a}g\,\,}
\newcommand\ind{\rm ind}
\newtheorem{notation}{Notation}[section]
\newtheorem{thm}{Theorem}[section]
\newtheorem{rem}{Remark}[section]
\newtheorem{prop}{Proposition}[section]
\newtheorem{cor}{Corollary}[section]
\newtheorem{ex}{Example}[section]
\newtheorem{defn}{Definition}[section]
\newcommand{\eps}{\varepsilon}
\newcommand\sou[1]{}
\numberwithin{equation}{section}
\newcommand{\bcase}{\begin{cases}}
\newcommand{\ecase}{\end{cases}}
\newcommand{\pmat}{\begin{pmatrix}}
\newcommand{\epmat}{\end{pmatrix}}
\newcommand{\sgn}{\mbox{sgn}}
\newcommand{\levy}{L\'evy }
\newcommand{\barray}{\begin{array}{rcl}}
\newcommand{\earray}{\end{array}}
\newcommand{\lqq}{\lefteqn}
\newcommand{\la} {{\langle}}
\newcommand{\ra} {{\rangle}}
\newcommand{\CE} {{\mathcal{E}}}
\newcommand{\OPER} {{\Theta}}
\newcommand{\CI} {{\mathcal{I}}}
\newcommand{\CBB} {{\mathcal{B}}}
\newcommand{\CL} {{\mathcal{L}}}
\newcommand{\CX} {{\mathcal{X}}}
\newcommand{\CSS} {{\mathcal{S}}}
\newcommand{\lk}{\left}
\newcommand{\rk}{\right}
\newcommand{\ep} {\varepsilon }
\newcommand{\be} {\begin{enumerate} }
\newcommand{\ee} {\end{enumerate} }
\newcommand{\vr} {{ \varrho }}
\newcommand{\CF}{{ \mathcal{ F } }}
\newcommand{\CG}{{ \mathcal{ G } }}
\newcommand{\CA}{{ \mathcal{ A } }}
\newcommand{\CM}{{\mathbb{M}}}
\newcommand{\RR}{{\mathbb{R}}}
\newcommand{\DD}{\mathbb{D}}
\newcommand{\QQ}{\mathbb{Q}}
\newcommand{\NN}{\mathbb{N}}
\newcommand{\MA}{\mathfrak{A}}
\newcommand{\CY}{\mathcal{Y}}
\newcommand{\PP}{{\mathbb{P}}}
\newcommand{\EE}{ \mathbb{E} }
\newcommand{\TT}{{\rm I \kern -0.2em T}}
\newcommand{\DEQS}{\begin{eqnarray*}}
\newcommand{\EEQS}{\end{eqnarray*}}
\newcommand{\DEQSZ}{\begin{eqnarray}}
\newcommand{\EEQSZ}{\end{eqnarray}}
\begin{document}

\begin{frontmatter}

\title{Nonlinear filtering with correlated L\'evy  noise characterized by copulas}

\runtitle{Nonlinear Filtering with correlated L\'evy noise}

\begin{aug}

\author{\fnms{B. P. W.} \snm{Fernando}\ead[label=e1]{bandhisattambige.fernando@unileoben.ac.at} }
\and
\author{\fnms{E.} \snm{Hausenblas}\ead[label=ehier]{erika.hausenblas@unileoben.ac.at}}

\address{Lehrstuhl f\"ur Angewandte Mathematik, \\Montanuniversit\"at
  Leoben,\\
Franz Josef Stra\ss e 18, 8700 Leoben, \\Austria,\\
          \printead{e1} }

\address{Lehrstuhl f\"ur Angewandte Mathematik, \\Montanuniversit\"at
  Leoben,\\
Franz Josef Stra\ss e 18, 8700 Leoben, \\Austria,\\
          \printead{ehier} }


\affiliation{Montanuniversitaet Leoben}

 \runauthor{B. P. W. Fernando  and E. Hausenblas}

\end{aug}

\del{  \runauthor{E. Hausenblas  et al.}

  \affiliation{Montanuniversitaet Leoben}

  \address{Franz Josefstra\ss e 18, 8700 Leoben, Austria,\\
          \printead{e1,e2}}

\end{aug}
}
\begin{abstract}

The objective in stochastic  filtering  is to reconstruct
the information about an unobserved (random) process, called the
signal process, given the current available observations of a
certain noisy transformation of that process.

Usually $X$ and $Y$ are modeled by stochastic differential equations driven by
a Brownian motion or a jump (or L\'evy) process.
 We are interested in the situation where both
the state process $X$ and the observation process $Y$ are
perturbed by coupled \levy processes.
More precisely,
$L=(L_1,L_2)$ is a $2$--dimensional \levy process in which the
structure of dependence is described by a \levy copula.
We derive the associated Zakai equation for the density process and establish sufficient conditions depending on the copula and $L$
for the solvability of the corresponding solution to the Zakai equation.
In particular, we give conditions of existence and uniqueness of the density process, if one is interested to estimate quantities like
$\PP( X(t)>a)$, where $a$ is a threshold.
\end{abstract}

\begin{keyword}[class=MSC]
\kwd[Primary ]{60H15, 35R30}
\kwd{60K35}
\kwd[; secondary ]{46G10, 28B05}
\end{keyword}

\begin{keyword}
\kwd{Nonlinear filtering} \kwd{\levy processes} \kwd{\levy copula}
\end{keyword}

\end{frontmatter}


\date{\today}

\section{Introduction}

The objective in stochastic  filtering  is to reconstruct
information about an unobserved (random) process, called the
signal process, given the current available observations of a
certain noisy transformation of that process.
Here, the underlying problem is, that the unobserved problem may
be corrupted by noise, and in addition, the observations made are
usually again corrupted by some noise or
random errors. The main objective of stochastic filtering is to estimate an
evolving dynamical system usually called signal.
That is, to extract the most precise information
about the underlying system and to
filter out the ``noise'' in the observations.
These kind of problem appears in physics, engineering, and
finance among others.


This measurement noise is modeled very often by a
stochastic process of Gaussian or Poisson type.
In particular, the signal and the
observation process can be modeled  either by a discontinuous or
continuous random process.
When both the signal $X$ and the
observation $Y$ have discontinuous paths, one can distinguish three main frameworks. The first one is the
case in which $Y$ is driven by a counting process or a marked point process.
We can refer to \cite{Bremaud,Ceci-2006,Frey+Runggaldier-2001,Klieman et al-1990,Shiryaev+Lipster-1989}, and
 \cite{Shiryaev+Lipster-2001-II} among others for the results and advances made in
this situation. The second framework is the case in which $Y$ is driven by a
 mixed type process, that it, $Y$ can be viewed as a sum of marked point
process and a diffusion process. This case is the subject of
recent papers   \cite{Ceci+Colaneri-2012-ADV,Frey+Runggaldier-2010,Frey+Schimdt-2012,Frey et al-2011}.
Finally, one can model  the signal $X$ and the observation $Y$  by a jump-diffusion processes, which is done e.g.\ in
 \cite{Ceci+Colaneri-2012-ADV}.
 In that work, they also allow processes $X$ and $Y$ to be
correlated and have common jump times.

 In the present paper we consider the filtering problem similar to the model in \cite{Ceci+Colaneri-2012-ADV} but address the
 difficult situation where the signal and observation process are driven by two \levy processes which are
 correlated. To be more precise, in our model the state $X$
and the observable $Y$ solve a stochastic equation driven by
general \levy processes. The Brownian part in $X$ may be degenerate. In addition both
processes are corrupted by a pair of two purely discontinuous \levy processes,
where the dependence structure is given by a \levy copula. Here
$X$ is corrupted by the first  process and the observation process is corrupted by the
second process.
By using the change of measure method we
derive the associated Zakai equation. Using copula, we were able to calculate the diffusion
coefficient in front of the random driving process in the Zakai
equation explicitly. We treat the case of finite and infinite
\levy measure separately  in Theorem \ref{copula_finite} and
 Theorem \ref{copula_inf}. As mentioned in the abstract, we were mainly interested in the case where one would like to estimate entities like
 $\PP\lk( X(t)>a\mid Y(s),\, 0\le s\le t\rk)=\EE [1_{(a,\infty)}(X(t))\mid Y(s),\, 0\le s\le t]$, $a\in\RR$.
 Here the main difficulty is that the function $\RR\ni x\mapsto 1_{(a,\infty)}(x)$ is not twice differentiable and one has
to use the smoothing property of the infinitesimal generater of
the driving \levy process of $X$ (see \cite{hsym}). Because of this, we also use the change of measure transformation and consider the Zakai Equation.
In this paper, we were able to specify in Theorem \ref{measurevalued}
the exact conditions under which
the density process exists and is uniquely defined. In addition, we investigated the regularity of the process.

The organization of the paper is as follows. In Section 2, we
introduce the problem and derive the Zakai Equation for finite and
infinite \levy measures. In Section 3 we consider the case where
one is interested to estimate an entity like $\PP\lk( X(t)>a\rk)$,
$a\in\RR$. Here, the main result is Theorem \ref{measurevalued}.
Corollary \ref{spec_ex} is an example which illustrates the
applicability of Theorem \ref{measurevalued}. In the appendix we
summarize results that are necessary for the proofs of our main
results. In particular, in \ref{aA} we introduce the
Zakai equation as an evolution equation taking values in Sobolev
spaces. In \ref{aB} we introduce \levy copulas and give
known results necessarily for the proofs of our main results.


\begin{notation}
We denote by $\RR_+$ the positive real half line, i.e.\ $\RR_+=(0,\infty)$, and by
$\RR^0_+$ the positive real have line including zero,  i.e.\ $\RR^0 _+=[0,\infty)$.
For  a measurable space $(E,\CE)$ we denote by $B_b(E)$ the Banach space  of all bounded, real--valued, $\CE$--measurable functions
equipped with the supremum norm.  For  a metric space $(E,\CE)$ we denote by $C_b(E)$ the Banach space  of all bounded, real--valued and continuous  functions
equipped with the supremum norm. Let us denote by $\CSS$ the Schwartz space of all  rapidly decreasing functions and $\CSS^\prime$ its dual.
For $s\in\RR$ and $p\ge 1$ we denote by $H_p^s(\RR^d )$ the Bessel Potential Spaces (or Sobolev spaces of fractional order), i.e.\
$$
H^s_p(\RR^d):=\{ f\in \CSS^\prime: |f|_{H^s _p}:= |\CF^{-1}(1+|\xi|^2 )^\frac s2 \CF f|_{L^p}<\infty\}.
$$
Here, $\CF$ denotes the Fourier transform given by
 $$
\mathcal{F} f(\xi) =  \hat f(\xi)=(2\pi) ^{-d} \, \int_{\RR^d} e^ {i\xi ^Tx} f(x)\, dx, \quad f\in L^ 2 (\RR^d).
$$
The space $C^{(n)}_b(\RR)=\{f:\RR\rightarrow\RR:f\;\text{is}\;n\;\text{times}\;\text{continuously}\;\text{differentiable}\;\text{and}\;\text{bounded}\}.$
\end{notation}

\section{Problem setting and the Zakai equation}

As mentioned in the introduction, we consider the filtering problem with \levy noise.
In particular, the state and observation processes are both perturbed by a
\levy noise. Since in practice the noises in the state process and the observation process
are usually depending on each other, so we allow our model to have certain dependence structure.

In the case of Gaussian variables the dependence structure is described via a correlation matrix.
However for the non-Gaussian random variables, the use of correlation coefficients is often misleading. Hence, we must choose
the right tool to describe the dependence structure for non-Gaussian noise.
Here, copulas are nowadays widely used in finance to express
dependence of non-Gaussian
random variables. In Apendix \ref{levy_copula} we give a short summary on
copula and some facts that we need for the proof of our
main results. For a more detailed  introduction, we refer to the books Cherubini et al.\ \cite{cher}, Nelsen \cite{nelsen},  Malvergne and Sornette \cite{mal}.

Let $(X_1,X_2,\ldots,X_n)$ be a random vector with marginal distribution functions $F_i$, i.e. $F_i(x)=\PP\lk( X_i\le x\rk)$.
By assuming $F_1, \ldots, F_n$ are continuous, one can show that up to a transformation the random vector
$$
( F_1(X_1), F_2(X_2),\ldots, F_n(X_n))
$$
has uniformly  distributed  margins. The cumulative distribution function $
(U_1, U_2,\ldots, U_n)
$ associated to $(X_1,\ldots, X_n)$ is defined by \del{
$$
C:[0,1]^ n \to \RR^ +_0
$$
is defined as  of
$$
(U_1, U_2,\ldots, U_n)
$$
with} $U_i=F_i(X_i)$, $i=1,\ldots, n$.
For any random vector $(X_1,X_2,\ldots,X_n)$  with distribution $F:\RR ^n\to[0,1]$ and continuous marginal distribution functions $F_i$, the function
$$
C:[0,1]^ n \to [0,1]
$$
such that $$C(u_1,\ldots, u_n)= F(F_1 ^{-1}(u_1),\ldots , F_n ^{-1}(u_n)),\quad u_i\in[0,1],\, i=1,\ldots ,n,
$$
is called the copula. The existence of a copula $C$ associated to given
marginal distribution $F_1, \ldots, F_n$ is ensured by following
theorem.
\begin{thm}\label{sklar_theo}[Sklar's  Theorem] Given an  $n$-dimensional distribution
function $F$ with continuous (cumulative) marginal distributions $F_1,\ldots , F_n$,
there exists a unique $n$-copula $C : [0, 1]^n\rightarrow [0, 1]$ such that
$$
F(x_1, \ldots, x_n) = C(F_1(x_1), . . . , F_n(x_n)) ,\quad \forall (x_1,\ldots,x_n)\in\RR^n .
$$
\end{thm}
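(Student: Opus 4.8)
The plan is to prove the two halves separately: first the existence of a copula representation, then its uniqueness. For existence, I would begin with the case where all marginals $F_i$ are not merely continuous but strictly increasing, hence genuine homeomorphisms of $\RR$ onto $(0,1)$. In that case each $F_i^{-1}$ is well defined, and one simply sets $C(u_1,\ldots,u_n):=F(F_1^{-1}(u_1),\ldots,F_n^{-1}(u_n))$ for $(u_1,\ldots,u_n)\in(0,1)^n$, extending to the closed cube $[0,1]^n$ by continuity (setting $C=0$ if any coordinate is $0$ and using the obvious limits when coordinates equal $1$). One then checks that $C$ is a bona fide copula: it is grounded, it has uniform one--dimensional margins because $F_i(F_i^{-1}(u))=u$, and it is $n$--increasing because $F$ is (the $n$--increasing property is inherited through the coordinatewise monotone substitution). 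Plugging $u_i=F_i(x_i)$ then recovers $F(x_1,\ldots,x_n)=C(F_1(x_1),\ldots,F_n(x_n))$, which is the claimed identity.

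To remove the strict--monotonicity assumption and handle general continuous $F_i$, I would use the probabilistic route via the \emph{quantile (generalized inverse) function} $F_i^{-1}(u):=\inf\{x:F_i(x)\ge u\}$. Let $(X_1,\ldots,X_n)$ be a random vector with joint law $F$; since each $F_i$ is continuous, the random variable $U_i:=F_i(X_i)$ is uniformly distributed on $[0,1]$, and moreover $F_i^{-1}(F_i(X_i))=X_i$ almost surely (this last equality is precisely where continuity of $F_i$ is used — it fails at atoms). Hence almost surely $(X_1,\ldots,X_n)=(F_1^{-1}(U_1),\ldots,F_n^{-1}(U_n))$, so the joint law of $(U_1,\ldots,U_n)$, call its distribution function $C$, is a copula, and
$$
F(x_1,\ldots,x_n)=\PP(F_1^{-1}(U_1)\le x_1,\ldots,F_n^{-1}(U_n)\le x_n)=\PP(U_1\le F_1(x_1),\ldots,U_n\le F_n(x_n))=C(F_1(x_1),\ldots,F_n(x_n)),
$$
where the middle step uses the standard equivalence $F_i^{-1}(u)\le x\iff u\le F_i(x)$ valid for continuous $F_i$. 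This establishes existence.

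For uniqueness, suppose $C$ and $\tilde C$ both satisfy the identity. Fix $(u_1,\ldots,u_n)\in[0,1]^n$. Because each $F_i$ is continuous, its range is all of $(0,1)$ (together with possibly the endpoints $0,1$ in the limit), so for each $u_i$ we can choose $x_i$ with $F_i(x_i)=u_i$ (for $u_i\in\{0,1\}$ use the groundedness/margin normalization of copulas directly). Then
$$
C(u_1,\ldots,u_n)=C(F_1(x_1),\ldots,F_n(x_n))=F(x_1,\ldots,x_n)=\tilde C(F_1(x_1),\ldots,F_n(x_n))=\tilde C(u_1,\ldots,u_n),
$$
so $C=\tilde C$ on all of $[0,1]^n$. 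The main obstacle — and the only place real care is needed — is the almost--sure identity $F_i^{-1}\circ F_i(X_i)=X_i$ and the surjectivity of $F_i$ onto $(0,1)$; both hinge essentially on the continuity hypothesis on the marginals, and it is exactly the failure of these facts in the presence of atoms that makes the discontinuous case genuinely different (there uniqueness holds only on $\prod\mathrm{Ran}(F_i)$). Everything else is a routine verification of the copula axioms (groundedness, uniform margins, $n$--increasingness) for the candidate $C$, which I would not spell out in full detail.
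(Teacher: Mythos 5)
The paper does not actually prove this statement: Theorem \ref{sklar_theo} is quoted as a classical result from the copula literature (Nelsen, Cherubini et al.), so there is no in-paper argument to compare against. Judged on its own, your proposal is the standard probabilistic proof of Sklar's theorem in the continuous-marginal case and is essentially correct. The key steps all hold: $U_i=F_i(X_i)$ is uniform on $[0,1]$ precisely because $F_i$ is continuous; $F_i^{-1}(F_i(X_i))=X_i$ almost surely since $X_i$ falls into the interior of a flatness interval of $F_i$ with probability zero (countably many such intervals, each of $F_i$-measure zero); the equivalence $F_i^{-1}(u)\le x\iff u\le F_i(x)$ is valid for the left-continuous generalized inverse of any right-continuous distribution function; and surjectivity of $F_i$ onto $(0,1)$ gives uniqueness there. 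Two small points you should tighten if you write this out in full: for the boundary values $u_i=1$ in the uniqueness argument, the cleanest route is to invoke the Lipschitz continuity of copulas (a consequence of the $n$-increasing property together with uniform margins) and pass to the limit from $(0,1)^n$, rather than appealing vaguely to ``margin normalization''; and your first paragraph (strictly increasing marginals) is subsumed by the second and could be dropped. Neither affects correctness.
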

\del{ I do not understand!!!
In addition one can show that
a copula defined as above satisfies certain properties {WHAT KIND OF PROPERTIES}.
Conversely, if a function satisfy these properties, then it describes
the dependence structure of certain random variables.}
There are several different types of copulas. The ones very
frequently seen in the literature are the independent copula
defined by
$$ C(u_1, u_2, \ldots , u_n) = u_1 u_ 2 \cdots  u_n,
$$
the Clayton copula defined for $\theta \in  [-1,\infty)\setminus\{0\}$  by
$$ C(u_1, u_2, \ldots , u_n) = \max\lk([u_1^{-\theta}+ u_ 2 ^{-\theta} +\cdots+u_n^{-\theta} -(n-1)]^{-\frac 1 \theta},0\rk),\,
$$
and the Gumpel copula defined for $\theta \in  [1,\infty)$  by
\DEQS \lqq{ C(u_1, u_2, \ldots , u_n) }&&\\&=&  \exp\lk(-\lk[
(-\ln u_1)^{\theta}+ (-\ln u_ 2) ^{\theta} +\cdots +(-\ln
u_n)^{\theta} \rk]^{\frac 1 \theta}\rk). \EEQS

In a similar way we can define the L\'evy copulas which is a
general concept to capture jump dependence in multivariate L\'evy
processes. The L\'evy copula  is described in terms of the \levy
measure. For more detailed introduction to \levy copula, we refer
to the works of Cont and Tankov \cite{tankov,tankov1} and Tankov
and Kallson \cite{tankov2}. In addition we summarize some basic facts in appendix \ref{levy_copula}.
Since the L\'evy measure is usually $\sigma$--finite, the
definition of a copula has to be extended to a function acting on
$[-\infty,\infty]$.

For this purpose, let $\nu$ be a \levy measure on $\RR ^n$ with marginal intensities
$\nu_1,\nu_2,\ldots,\nu_n$.
Let $\CI:\RR\setminus\{0\}\to\CBB (\RR)$ be given by
\DEQS
\mathcal{I} (x) =\bcase (x,\infty)\,& x>0,
\\
(-\infty,x), & x<0.\ecase
\EEQS
Let $U_i$ be the tail integral defined by
\DEQSZ\label{tail1}
U_i(z)=\bcase \sgn(z)\nu_i( \CI(z)), & \mbox{for} \, z\in \RR\setminus\{0\}\\
0 & \mbox{for} \, z=\infty \mbox{ or } z=-\infty\\
\infty & \mbox{for} \, z=0,\quad i=1,2,\ldots,n \\
\ecase
\EEQSZ
and
\DEQSZ\label{def_tail_n}
U(z_1,z_2,\ldots ,z_n)=\bcase \lk( \prod_{i=1}^n \sgn(z_i)\rk)\,  \nu\lk( \prod_{i=1}^n \CI(z_i)\rk), &
\\
\quad  \mbox{for} \, z_1,z_2,\ldots ,z_n\in \RR\setminus\{0\}
\\0, \quad \mbox{for} \, |z_i|= \infty,i=1,\ldots, n\\
\nu(\RR^n), \quad  \mbox{for} \, z_i= 0,\,  i=1,\ldots,n. \ecase
\EEQSZ Now, for an $n$--dimensional L\'evy process $L$, one can
associate a \levy copula $ H:[-\infty,\infty] ^n
\to[-\infty,\infty] $ as
$$
U(z_1,\ldots, z_n) = H(U_1(z_1),\ldots, U_n(z_n)),\quad z_1,\ldots,z_n\in\RR.
$$
In fact, thanks again to Sklar--type Theorem (see \cite[Theorem
3.6]{tankov2}) for each $n$--dimensional
\levy process with intensity $\nu$ and  marginal intensities 
$\nu_i$, $i=1,\ldots,n$,  
there exists  a \levy copula $H$ such that
\DEQSZ\label{eqn1}
U(z_1,\ldots, z_n) = H(U_1(z_1), \ldots ,U_n(z_n)),\quad z_1,\ldots, z_n\in \RR. 
\EEQSZ

\medskip

Now, let us proceed with the setting of our main problem. Let $H$
be a \levy copula and $L=\{ L(t)=(L_1(t),L_2(t))\in\RR^2 \, :t\ge
0\}$ be a two dimensional
pure jump  L\'evy process with its marginal intensities $\nu_1$ and $\nu_2$. 
Let $L_0$ be a compensated pure jump  \levy process  and $W_2=\{W_2(t):t\ge 0\}$ be a Brownian motion. We assume that
all these objects are defined on a probability space $\MA=(\Omega,\CF,(\CF_t)_{t\ge 0},\PP)$.
We also assume that $L$,
 $L_0$ and $W_2$ are mutually  independent.

\medskip

 Let the signal process  $X$ be the solution of the following SDE with random initial data $X_0$:
 \DEQSZ\label{eqn-x}
\lk\{ \barray  dX(t) &=& b(X(t))\, dt + dL_0(t)+ d L_1(t),\quad t>0,
\\ X(0)&=& X_0.\earray\rk.
\EEQSZ
Here $b:\RR\to\RR$ is a Lipschitz continuous function. Also we
suppose that the observable process $Y$ solves the following SDE with random initial data $Y_0$.
 \DEQSZ\label{eqn-y}
\lk\{ \barray  dY(t) &=& 
g(X(t) )\, dt + dL_2(t)+ dW_2(t) ,\quad t>0,\\ Y(0)&=&
Y_0.\earray\rk.
 \EEQSZ
where $g:\RR\to\RR$ is a twice differentiable mapping. Let $\{
\CX_t: t\ge 0\}$ and $\{ \CY_t: t\ge 0\}$ be\del{the usual
augmentation of} the filtration \del{of $X$ and $Y$, respectively,
} defined by $\CX_t=\sigma(\{ X(s), s\le t\})$ and
$\CY_t=\sigma(\{ Y(s), s\le t\})$, respectively. In addition, let
$\CX=( \cup_{t\ge 0} \CX_t)$ and $\CY=( \cup_{t\ge 0}
\CY_t)$.

The filtering problem consists of determining at a fixed time $t>0$
the conditional distribution $\pi_t$ of the signal $X$ given the information accumulated
from observing $Y$ in the time interval $[0,t]$; that is, for $f\in C ^{(2)}_b(\RR)$, we are aiming to compute the Bayes estimator 
$$
\pi_t( f) =
\EE \lk[ f(X(t))\mid \CY_t\rk], \quad t\ge 0.
$$
\del{The Bayes estimator of the density process is then given by
$$
\pi_t f = {\EE \lk[ f(X(t))\mid \CY_t\rk]\over \EE \lk[ f(X(t))\mid \CY_t\rk]}, \quad t\ge 0.
$$
}

In order to study about the normalized conditional density $\pi=\{ \pi_t:t\ge 0\}$,
one can mainly use two different methods. The first one is probability measure transformation and obtain Zakai equation which solves the un-normalized conditional density associated with normalized density $\pi$. Then discuss about $\pi$ using Kallianpur-Striebel formula (see \cite[Proposition 3.16]{BaC09}). The second method is called innovation approach which directly gives Fujisaki-Kallianpur-Kunita equation (called "FKK equation").  Normalized density $\pi$ is the solution of FKK equation. In this paper we use the former method.

In the first step we apply the Girsanov's Theorem to get a new
measure $\QQ$ which is chosen in such a way that $Y$ is a
\levy process over the probability space
$(\Omega,\CY,(\CY_t)_{t\ge 0},\QQ)$.
For this purpose let $Z=\{Z(t):t\ge 0\}$ be given by
\DEQSZ\label{zsolves} Z(t)
&:=& \exp\lk( - \int_0^t g(X(s))\, dW_2(s)-\frac 12 \int_0^t g^2
(X(s))\, ds\rk), \quad t\ge 0.
\EEQSZ \sou{crisan, p. 52 (3.18)}
Note, that $Z$ solves
\DEQS \lk\{\barray dZ(t) &=& Z({t^-})
g(X({t^-}))\, dW_2(t),\\ Z(0)&=&1. \earray\rk. \EEQS over
$(\Omega,\CF,(\CF_t)_{t\ge 0},\PP)$. Let $\QQ$ be a new probability measure given by
\DEQSZ\label{defineQ} { d \QQ\over d\PP}\Big|_{\CF_t}&=&
Z(t),\quad t\ge 0. \EEQSZ
As in the Brownian case, one can show the following proposition.
\begin{prop}\label{above}
If
\DEQS
\EE \lk[ \int_0^t \| g(X(s))\| ^2 \, ds \rk] <\infty,\quad
\EE \lk[ \int_0^t Z(s)\, \| g(X(s))\|  \, ds \rk] <\infty,\quad
t\ge 0,
\EEQS
then under $\QQ$ the observation process $Y$ is a L\'evy process.
In particular, the $\sigma$-field $\CY ^+_t=\sigma( Y(r)-Y(s), t\le s\le r)$ is independent to $\CY_t$.
\end{prop}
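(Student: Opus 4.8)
The plan is to adapt the classical change-of-measure argument from Brownian filtering; the one genuinely new feature here is that the jump driver $L_2$ is correlated with the signal $X$ (through $L_1$ and the \levy copula $H$), so it is not clear a priori that $Y$ keeps stationary independent increments after the measure change, which acts only on the Brownian part $W_2$.

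First I would check that $\QQ$ is a probability measure. Since $Z$ is the stochastic exponential \eqref{zsolves}, it is a nonnegative $\PP$-local martingale, hence a supermartingale with $\EE_\PP[Z(t)]\le 1$. The hypothesis $\EE_\PP\big[\int_0^t\|g(X(s))\|^2\,ds\big]<\infty$ guarantees that $\int_0^{\cdot}g(X(s))\,dW_2(s)$ is a true $L^2$-martingale, so the exponent is well defined, and a localization argument together with the hypothesis $\EE_\PP\big[\int_0^t Z(s)\|g(X(s))\|\,ds\big]<\infty$ upgrades $Z$ to a genuine $(\CF_t)$-martingale (these are exactly the two conditions used in the Brownian case; see e.g.\ \cite{BaC09}). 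Consequently $\EE_\PP[Z(t)]=1$ for every $t$, $\QQ$ is a probability measure on each $\CF_t$, and $\EE_\PP[Z(r)\mid\CF_s]=Z(s)$ for $s\le r$.

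The heart of the proof is a direct computation of the conditional characteristic function of the increments of $Y$. Fix $0\le t\le s<r$ and $\theta\in\RR$. By the Bayes formula for conditional expectations under an absolutely continuous change of measure,
$$\EE_\QQ\big[e^{i\theta(Y(r)-Y(s))}\mid\CF_s\big]=\tfrac{1}{Z(s)}\,\EE_\PP\big[Z(r)\,e^{i\theta(Y(r)-Y(s))}\mid\CF_s\big].$$
I would then insert $Z(r)=Z(s)\exp\!\big(-\!\int_s^r g(X(u))\,dW_2(u)-\tfrac12\!\int_s^r g^2(X(u))\,du\big)$ and, from \eqref{eqn-y}, $Y(r)-Y(s)=\int_s^r g(X(u))\,du+(L_2(r)-L_2(s))+(W_2(r)-W_2(s))$, and condition first on the larger $\sigma$-field $\CG$ generated by $\CF_s$, the path of $X$ on $[s,r]$, and the increments of $L_2$ on $[s,r]$. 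Because $W_2$ is independent of $(L_0,L_1,L_2,X_0)$, the Brownian increments on $[s,r]$ are independent of $\CG$, so (by disintegration over the integrand, which is $\CG$-measurable) the pair $\big(\int_s^r g(X(u))\,dW_2(u),\,W_2(r)-W_2(s)\big)$ is, conditionally on $\CG$, centred Gaussian with $\CG$-measurable covariance, and a one-line computation of its moment generating function makes the Girsanov density $\exp\!\big(-\!\int_s^r g\,dW_2-\tfrac12\!\int_s^r g^2\,du\big)$ cancel exactly against the drift factor $\exp\!\big(i\theta\!\int_s^r g(X(u))\,du\big)$, leaving
$$\EE_\PP\Big[\tfrac{Z(r)}{Z(s)}\,e^{i\theta(Y(r)-Y(s))}\,\Big|\,\CG\Big]=e^{i\theta(L_2(r)-L_2(s))}\,e^{-\theta^2(r-s)/2}.$$
Since $L_2(r)-L_2(s)$ is an increment of the \levy process $L=(L_1,L_2)$ it is independent of $\CF_s$ under $\PP$, so applying $\EE_\PP[\,\cdot\mid\CF_s]$ and the tower property yields $\EE_\QQ\big[e^{i\theta(Y(r)-Y(s))}\mid\CF_s\big]=\exp\!\big((r-s)(\psi_2(\theta)-\theta^2/2)\big)$, where $\psi_2$ is the \levy exponent associated with the marginal intensity $\nu_2$.

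The right-hand side is deterministic and depends on $s,r$ only through $r-s$; a standard successive-conditioning argument then shows that under $\QQ$ the process $Y$ has stationary increments, that each increment $Y(r)-Y(s)$ is independent of $\CF_s$, and hence that $Y$ is a \levy process (with possibly random starting point $Y_0$) with respect to $(\CF_t)$, a fortiori with respect to $(\CY_t)$; in particular $\CY^+_t=\sigma(Y(r)-Y(s):t\le s\le r)$ is independent of $\CY_t$ under $\QQ$. I expect the main obstacle to be the bookkeeping in the third paragraph: keeping straight which factors are $\CG$-measurable so that the Gaussian moment generating function is applicable and the cancellation of the $\int_s^r g^2(X(u))\,du$ and $i\theta\int_s^r g(X(u))\,du$ terms is rigorous; a lesser technical point is making the localization argument for the martingale property of $Z$ precise under only the two stated $L^1$ and $L^2$ moment conditions.
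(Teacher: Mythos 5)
Your proof is correct, but it is considerably more explicit than the one in the paper. The paper's proof of Proposition \ref{above} is a three-line sketch: it invokes the It\^o--L\'evy decomposition to split $Y$ into its continuous part $Y^c$ and the jump part $L_2$, asserts that $Y^c$ becomes a Brownian motion under $\QQ$ (Girsanov), that the pure jump part is ``not affected by the change of measure'', and that the two parts are independent. Your route --- computing $\EE_\QQ\lk[e^{i\theta(Y(r)-Y(s))}\mid\CF_s\rk]$ via the Bayes formula, freezing $\CF_s$, the path of $X$ on $[s,r]$ and the increments of $L_2$ in a larger $\sigma$-field $\CG$, and letting the Gaussian moment generating function cancel the Girsanov density against the drift --- establishes all three of these assertions in a single computation. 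This buys something real: the claim that the jump part is unaffected is exactly the point that is not obvious in this setting, since the density $Z$ is built from $X$, which is driven by $L_1$ and hence correlated with $L_2$ through the \levy copula; your conditioning on $\CG$, combined with the independence of $W_2$ from $(X_0,L_0,L_1,L_2)$ (and the fact that $X$ in \eqref{eqn-x} does not involve $W_2$), is precisely what makes that claim rigorous. The paper's version is shorter but leaves this step entirely to the reader. Since both arguments ultimately rest on the same two facts --- martingality of $Z$ under the stated moment conditions, and independence of $W_2$ from the jump structure --- yours is best described as a fully worked-out version of the argument the paper only gestures at, rather than a fundamentally different proof.
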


\begin{proof}
Let $\QQ$ be defined as in equation \eqref{defineQ}. Firstly, note that by the It\^o-L\'evy
decomposition the continuous and discontinuous parts of $Y$ are independent. In addition, under the new probability measure $\QQ$, the continuous part of $Y$ is a Brownian motion. We can also see that the pure jump process is not affected by the change of measure.
\end{proof}
{Setting $V(t)=Z(t)^{-1}$, we obtain as
in \cite[Eq. (3.30) page 56]{BaC09} that \DEQS { d \PP\over
d\QQ}\Big|_{\CF_t}&=& V(t),\quad t\ge 0. \EEQS
\begin{rem}
The process $V=\{V(t):t\ge 0\}$ defined by $V(t)=Z(t)^{-1}$ solves
on $(\Omega,\CF,\PP)$ the equation
\DEQSZ\label{eqn-vnoep}
\lk\{\barray
dV(t) &{=}& V({t}) g(X({t}))\,\lk[  dW_2(t) +g(X(t))\, dt \rk]
\\ & =& V({t}) g(X({t}))\, dY^ c(t),\\ V(0)&=&1.
\earray\rk.
\EEQSZ
($Y^c$ denotes the continuous part of $Y$, i.e.\ the part of $Y$ without jumps).
Since the process $ W_2(t) +\int_0^t g(X(s))\, ds$ becomes a Brownian motion over $(\Omega,\CF,(\CF_t)_{t\ge 0},\QQ)$,
$V$ is a $(\Omega,\CF,(\CF_t)_{t\ge 0},\QQ)$--martingale.
\end{rem}}
The following result is an immediate consequence of Proposition \ref{above}. We also refer to \cite[Proposition 3.15, page 56]{BaC09}.
\begin{cor}\label{cor23}
If $U$ is $\CF_{t^-}$--measurable,  
then the law of the two random variables
$
\EE^ \QQ \lk[ U \mid \CY \rk]$ and $\EE^ \QQ \lk[ U \mid \CY_{t^-}\rk]
$ are the same over $(\Omega,\CF,\QQ)$.
In particular, we have $\QQ$-a.s.\ 
$$
\EE^ \QQ \lk[ U \mid \CY \rk] 
{=} \EE^ \QQ \lk[ U \mid \CY_{t^-}\rk].
$$
\end{cor}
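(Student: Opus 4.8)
The plan is to establish the pointwise identity $\EE^\QQ[U\mid\CY] = \EE^\QQ[U\mid\CY_{t^-}]$ holding $\QQ$-a.s., from which equality in law is immediate, and we may take $U$ bounded (the general case by truncation). Since $\QQ\sim\PP$ and each of the L\'evy processes $L_0,L_1,L_2$ a.s.\ has no jump at the fixed instant $t$, we have $\CY_{t^-}=\CY_t$, $\CF_{t^-}=\CF_t$ and $\CY=\CY_{t^-}\vee\CY_t^+$ up to $\QQ$-null sets, where $\CY_t^+:=\sigma(Y(r)-Y(s):t\le s\le r)$ as in Proposition \ref{above}; moreover $\CY_{t^-}\subseteq\CF_{t^-}$ because $Y$ is $(\CF_t)$-adapted. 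Put $M:=U-\EE^\QQ[U\mid\CY_{t^-}]$: it is bounded, $\CF_{t^-}$-measurable, and $\EE^\QQ[M\mid\CY_{t^-}]=0$, and the claim is equivalent to $\EE^\QQ[M\,\eta]=0$ for every bounded $\CY$-measurable $\eta$. By a functional monotone class argument it then suffices to take $\eta=\xi\zeta$ with $\xi$ bounded nonnegative $\CY_{t^-}$-measurable and $\zeta$ bounded nonnegative $\CY_t^+$-measurable.

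The crux is the independence
\begin{equation}\label{eq-cor23-key}
\CY_t^+ \mbox{ and } \CF_{t^-} \mbox{ are independent under } \QQ .
\end{equation}
Granting \eqref{eq-cor23-key}, the conclusion is quick: $M\xi$ is $\CF_{t^-}$-measurable and $\zeta$ is $\CY_t^+$-measurable, so $\EE^\QQ[M\xi\zeta]=\EE^\QQ[M\xi]\,\EE^\QQ[\zeta]$, while $\EE^\QQ[M\xi]=\EE^\QQ[\xi\,\EE^\QQ[M\mid\CY_{t^-}]]=0$; hence $\EE^\QQ[M\eta]=0$, finishing the proof (and in particular $\EE^\QQ[U\mid\CY]$ and $\EE^\QQ[U\mid\CY_{t^-}]$ have the same law). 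To prove \eqref{eq-cor23-key} one re-examines the change of measure \eqref{defineQ}; this is a genuine strengthening of the displayed ``in particular'' in Proposition \ref{above} (which only gives independence of $\CY_t^+$ from $\CY_t\subsetneq\CF_t$), but it rests on the same analysis of the change of measure that underlies that proposition. Two facts are needed. First, $\QQ$ does not change the law of the jump data: $W_2$ is independent of $(X_0,L_0,L_1,L_2)$ and the integrand $g(X(\cdot))$ in \eqref{zsolves} is adapted to the filtration of those processes, so conditioning on $\CG:=\sigma(X_0,L_0,L_1,L_2)$ turns $\int_0^t g(X(s))\,dW_2(s)$ into a centred Gaussian of variance $\int_0^t g^2(X(s))\,ds$, whence $\EE^\PP[Z(t)\mid\CG]=1$ and $(X,L_0,L_1,L_2)$ has the same law under $\QQ$ as under $\PP$. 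Second, as observed in the proof of Proposition \ref{above}, $Y^c:=W_2+\int_0^\cdot g(X(s))\,ds$ is a $\QQ$-Brownian motion, and the same conditioning (a conditional Girsanov argument given $\CG$) shows $Y^c$ is $\QQ$-independent of $(X,L_0,L_1,L_2)$. Consequently the post-$t$ observation increments $Y(r)-Y(t^-)=(L_2(r)-L_2(t^-))+(Y^c(r)-Y^c(t))$, $r\ge t$, are generated under $\QQ$ by the post-$t$ increments of the L\'evy process $(L_1,L_2)$ — independent of $\CF_{t^-}$ by the independent-increments property together with $(L_1,L_2)\perp_\QQ(L_0,Y^c,X_0)$ — and of the Brownian motion $Y^c$ — independent of $\CF_{t^-}$ by the Brownian property, and jointly so because $Y^c$ is independent of all the jump processes. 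This yields $\CY_t^+\perp_\QQ\CF_{t^-}$, i.e.\ \eqref{eq-cor23-key}.

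The main obstacle is exactly \eqref{eq-cor23-key}. One must resist the (false) suspicion that the L\'evy-copula coupling between the noise $L_1$ driving $X$ and the noise $L_2$ driving $Y$ should destroy independence of the past $\CF_{t^-}$ from the future observation increments $\CY_t^+$: the copula couples the two coordinates at equal times, not across time, and the Girsanov transform \eqref{defineQ} alters only the continuous martingale part $Y^c$, so this independence survives the passage to $\QQ$. Everything else — the reduction to products $\xi\zeta$ and the final two-line computation — is routine.
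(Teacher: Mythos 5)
Your proof is correct and rests on the same underlying mechanism as the paper's --- independence, under $\QQ$, of the future observation increments from the past --- but it is organized around a genuinely stronger key lemma than the one the paper states. The paper's proof is two sentences: $Y$ is a L\'evy process under $\QQ$ (Proposition \ref{above}), hence $\CY_{t^-}^+$ is independent of $\CY_{t^-}$, and the assertion ``follows from'' Kallenberg's Proposition 6.6. That proposition, however, requires the \emph{conditional} independence of $\sigma(U)$ and $\CY_{t^-}^+$ given $\CY_{t^-}$, which does not follow from the unconditional independence $\CY_{t^-}^+\perp\CY_{t^-}$ alone when $U$ is merely $\CF_{t^-}$--measurable; what is actually needed is precisely your key claim $\CY_t^+\perp_{\QQ}\CF_{t^-}$. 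You prove this by returning to the change of measure \eqref{defineQ}: conditioning on the jump data $\sigma(X_0,L_0,L_1,L_2)$ shows that $\QQ$ leaves the law of those processes unchanged and turns $Y^c$ into a Brownian motion independent of them, after which the independence of $\CY_t^+$ from all of $\CF_{t^-}$ is assembled from the independent-increments property of the two-dimensional L\'evy process $(L_1,L_2)$ (your remark that the copula couples the coordinates at equal times, not across time, is exactly the right point). Your monotone-class reduction to products $\xi\zeta$ then replaces the citation of Kallenberg. The net effect is that your argument is longer but self-contained and closes the gap between ``$Y$ has independent increments under $\QQ$'' and the statement being proved, whereas the paper buys brevity at the cost of leaving the passage from $\CY_{t^-}$--independence to $\CF_{t^-}$--independence implicit. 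One minor caveat: your derivation implicitly takes $(\CF_t)_{t\ge0}$ to be the filtration generated by the driving noises; if $\CF_t$ is allowed to be larger, one should add the standard hypothesis that the driving L\'evy processes have increments independent of $\CF_t$, after which your argument goes through unchanged.
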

\begin{rem}\label{rem2_2}
Similarly it can be shown that
if $U$ is $\CF_{t}$--measurable,
then $\QQ$-a.s.\ 
$$
\EE^ \QQ \lk[ U \mid \CY \rk] 
{=} \EE^ \QQ \lk[ U \mid \CY_{t}\rk].
$$
\end{rem}
\begin{proof}
Since $Y$ is a \levy process over $(\Omega,\CF,(\CF_t)_{t\ge 0},\QQ)$, its
increments are independent. Hence, for all $t>0$, the $\sigma$--algebra
$\CY_{t^-}^ +$ generated by $Y(s)-Y({t^-})$, $s > t$ is
independent to  $\CY_{{t^-}}$ under the measure $\QQ$. From
\cite[Proposition 6.6, page\ 110]{kallenberg} the assertion follows.
\end{proof}
Fix $t\ge 0$. Let $\pi_t$ be the conditional distribution of $X(t)$ at time $t\ge 0$. The Kallianpur-Striebel formula  gives
for $t\ge 0$ (see \cite[Proposition 3.16]{BaC09})
$$
\pi_t(f)=\EE \lk[ f(X(t))\mid \CY_t\rk] = \int_\RR \pi_t (x)\, f(x)\, dx={\EE  ^\QQ\lk[ f(X(t)) V(t)\mid \CY_t\rk]\over \EE ^\QQ\lk[V(t)\mid \CY_t\rk]
}.
$$
Now, we introduce the density process of the un-normalized
conditional distribution $\rho=\{\rho_t:t\ge 0\}$ which is the measure valued
process defined by
$$
\rho_t(f)=\la \rho_t,f\ra=\EE ^ \QQ  \lk[ V(t)\, f(X(t))\mid\CY_t\rk]  = \int_\RR \rho_t (x)\, f(x)\, dx,\quad t> 0, \quad \rho_0=\pi_0.
$$
We will see later on, that the process $\rho=\{\rho_t:t\ge 0\}$ is very useful to calculate  $\pi=\{\pi_t:t\ge 0\}$.

By Corollary \ref{cor23}, we have 
$$
\EE^ \QQ[f(X(t))V(t)\mid\CY] 
{=}  \EE^ \QQ[f(X(t))V(t)\mid\CY_t] =\la \rho_t,f\ra,\quad t\ge 0,\quad\QQ-a.s..
$$
We also introduce the process $\xi=\{\xi(t):t\ge 0\}$ defined by
\DEQSZ\label{xiintro}
\xi(t)=\EE^\QQ\lk[ V(t)\mid \CY_t\rk],\quad t\ge 0.
\EEQSZ
Since $V$ is a $\CF_t$--martingale over $(\Omega,\CF,\QQ)$ and $\CY_t\subset\CF_t
$, it follows that for $0\le s<t$
$$
\EE^ \QQ [\xi(t)\mid \CY_s]  =\EE^ \QQ\lk[ \EE^ \QQ [ V(t)\mid \CF_s]\mid \CY_s\rk] = \EE^ \QQ[ V(s)\mid \CY_s]=\xi(s).
$$
Moreover,
$$
\xi(t) \pi_t(f)=\rho_t(f),\quad t\ge 0,
$$ and
$$
 \pi_t(f)=\rho_t(f)\xi^ {-1}(t),\quad t\ge 0.
$$
For these two formulas, we refer to \cite[Definition 3.17 \& Corollary 3.19, pages\ 58-59]{BaC09}.
\del{Observe, $\xi$ solves
\DEQS
\xi(t)= 1+\int_0^t \EE^  \QQ [ V(s^-)g(X(s^-))\mid \CY_{s^-}]\, dY_s^c,\quad t\ge 0.
\EEQS
The inverse $\varsigma=\{ \varsigma_t:t\ge 0\}$ is given by}

In the next theorem, we will derive the Zakai equation which is solved by the un-normalized density
process $\rho=\{ \rho_t:t\ge 0\}$. To do that, we need to
introduce some additional notations. A \levy process $L$ is characterized by
 its
characteristic function. In particular, there exists a function $\psi:\RR \mapsto \mathbb{C}$ such that
$$
\ln(\EE e^{i\xi L(t)}) = t\psi(\xi),\quad \xi\in\RR.
$$
The infinitesimal generator of the Markovian semigroup of $L$ is the so called pseudo--differential operator
given by 
%
\DEQSZ\label{pseudo_def} A_0\, f := - \int_\RR e^{i \xi x }
\psi(\xi) \CF f (\xi)\, d\xi, \quad f\in C^{(2)}_b(\RR). \EEQSZ
Here $\CF f$ denotes the Fourier transform of the function $f$. The
function $\psi$ is called the \levy symbol of the \levy process
$X$, for more details on $A_0$ and its properties we refer to \cite{hsym}.
The following theorem associates with the case where the \levy measure of the two dimensional \levy process $L$ is finite.
\begin{thm}\label{copula_finite}
Let $L_0$ be a \levy process with \levy symbol $\psi$ and $A_0$ be the infinitesimal generator of $L_0$.
Let $\nu_1$ and $\nu_2$ be two finite \levy measures defined on the positive half real line, i.e.\ on $\RR_+$.
Let $H$ be a twice differentiable copula. Let us denote  the conditional \levy measure of jumps of $L_1$ given the jumps of $L_2$  by
\DEQS
\nu_{1,{z_2}}(U) &=& \int_U h(z_1,z_2)\, \nu_1(dz_1),
\quad U\in\CBB (\RR_+),
\EEQS
where
$$
h(z_1,z_2):=
{ {\partial ^ 2 \over \partial u_1\partial u_2} H(u_1,u_2) \Big|_{u_1=U_1 (z_1)\atop u_2=U_2 (z_2)}
 },
$$
and $U_1$, $U_2$ are the tail integrals of $\nu_1$ and $\nu_2$, respectively.
Let $g:\RR\to\RR$ and $\sigma:\RR\to\RR$ be Lipschitz continuous mappings.
Then the un-normalized conditional density estimator $\rho=\{ \rho_t:t\ge 0\}$ is a solution to the following equation
\DEQSZ\label{variational}
 \la \rho_t,f\ra  &=& \la \rho_0,f\ra +  \int_0^ t  \la
\rho_{{s^-}}\, ,f\cdot g \ra \, dY_s^ c
\\\nonumber  && {}+ \int_0^t \la \rho_{{s^-}},\CA_0  f\ra\, ds
 +\int_0^t   
 \la \rho_{{s^-}}, \OPER _{z_2}  f\, \ra\, \eta_2(dz_2,ds) 
,\quad \forall f\in C ^{(2)}_b(\RR),
\EEQSZ
where $\eta_2$ denotes the Poisson random measure associated to $L_2$ with intensity $\nu_2$,
the operators  $\OPER _{z}$ and $\CA_0$ are defined by
$$\OPER _{z} f(x)=   \int_{\RR_+}
\lk[f(x+z_1)-f(x) \rk] \nu_{1,z}(dz_1),\quad z\in\RR_+, x\in
\mathbb{R}, f\in C ^{(2)}_b(\RR),
$$
and
$$
\CA_0 f (x)=  b(x)f'(x) + A_0 f(x), \quad x\in \mathbb{R}, f\in C
^{(2)}_b(\RR),
$$
where the operator $A_0$ is the infinitesimal generator of the Markovian semigroup of $L_0$ which is a pseudo--differential operator and defined through $\eqref{pseudo_def}$.
\end{thm}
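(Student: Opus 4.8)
The plan is to derive \eqref{variational} by a change–of–measure/It\^o argument carried out on $(\Omega,\CF,(\CF_t)_{t\ge0},\QQ)$: one expands the real semimartingale $t\mapsto V(t)f(X(t))$ by the It\^o--L\'evy formula, applies the conditional expectation $\EE^\QQ[\,\cdot\mid\CY_t]$, reads off $\la\rho_t,f\ra=\EE^\QQ[V(t)f(X(t))\mid\CY_t]$ on the left, and identifies the four summands on the right. The structural facts that make this work are already at hand: by Proposition~\ref{above}, $Y$ is a L\'evy process under $\QQ$, so its continuous part $Y^c$ is a $(\CY_t)$–Brownian motion and its jump measure $\eta_2$ is a $(\CY_t)$–Poisson random measure with intensity $\nu_2$ under $\QQ$; by Corollary~\ref{cor23} and Remark~\ref{rem2_2}, for $\CF_s$– (resp.\ $\CF_{s^-}$–) measurable $U$ and $t\ge s$ one has $\EE^\QQ[U\mid\CY_t]=\EE^\QQ[U\mid\CY_s]$ (resp.\ with $\CY_{s^-}$); and the disintegration of the bivariate L\'evy measure furnished by the copula (recorded in Appendix~\ref{levy_copula}) turns the unobservable jumps of $L_1$ into an object driven by $\eta_2$.

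First I would expand $f(X(t))$. Since $\nu_1$ is finite, $L_1$ is compound Poisson with jump measure $\eta_1$ of intensity $\nu_1$, and \eqref{eqn-x} together with the It\^o--L\'evy formula gives, for $f\in C^{(2)}_b(\RR)$,
\begin{equation*}
f(X(t))=f(X_0)+\int_0^t\CA_0 f(X(s))\,ds+M^0_t+\int_0^t\!\!\int_{\RR_+}\bigl[f(X(s^-)+z_1)-f(X(s^-))\bigr]\eta_1(dz_1,ds),
\end{equation*}
where $\CA_0 f=bf'+A_0 f$ and $M^0$ is the $\QQ$–martingale generated by the Gaussian and jump parts of $L_0$. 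Combining this with $dV(t)=V(t)g(X(t))\,dY^c_t$ (the Remark after \eqref{eqn-vnoep}) through the product rule, and using that the covariation $[V,f(X)]$ vanishes because $V$ is continuous and driven by $W_2$, which is independent of $L_0$ and $L_1$, one gets
\begin{align*}
V(t)f(X(t))={}&f(X_0)+\int_0^t V(s^-)\CA_0 f(X(s))\,ds+\int_0^t V(s)f(X(s))g(X(s))\,dY^c_s\\
&{}+\int_0^t V(s^-)\,dM^0_s+\int_0^t\!\!\int_{\RR_+}V(s^-)\bigl[f(X(s^-)+z_1)-f(X(s^-))\bigr]\eta_1(dz_1,ds).
\end{align*}

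Next, apply $\EE^\QQ[\,\cdot\mid\CY_t]$ term by term. The constant contributes $\la\rho_0,f\ra$, since $V(0)=1$ and $\rho_0=\pi_0$. The $ds$–integral becomes $\int_0^t\la\rho_{s^-},\CA_0 f\ra\,ds$ by a conditional Fubini argument and Corollary~\ref{cor23}. For the integral against $Y^c$ one uses the commutation identity $\EE^\QQ[\int_0^t\phi_s\,dY^c_s\mid\CY_t]=\int_0^t\EE^\QQ[\phi_s\mid\CY_s]\,dY^c_s$ for predictable $\phi$ (valid since $Y^c$ is a $(\CY_t)$–Brownian motion under $\QQ$; proved by approximation with simple integrands and Corollary~\ref{cor23}), with $\phi_s=V(s)f(X(s))g(X(s))$, giving $\int_0^t\la\rho_{s^-},f\cdot g\ra\,dY^c_s$. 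The term $\int_0^t V(s^-)\,dM^0_s$ drops out: it is a $\QQ$–$(\CF_t)$–martingale null at $0$, strongly orthogonal to $Y$ under $\QQ$ (as $M^0$ lives on $L_0$, independent of $W_2$ and of $L_2$), so its $\CY$–optional projection is a $(\CY_t)$–martingale null at $0$ whose covariations with $Y^c$ and $\tilde\eta_2$ vanish, hence, by martingale representation for the $\QQ$–L\'evy process $Y$, is identically zero.

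The crux is the last term. Differentiating the copula relation $U(z_1,z_2)=H(U_1(z_1),U_2(z_2))$ once in each variable yields, on $\RR_+\times\RR_+$, the disintegration $\nu(dz_1,dz_2)=h(z_1,z_2)\,\nu_1(dz_1)\,\nu_2(dz_2)=\nu_{1,z_2}(dz_1)\,\nu_2(dz_2)$ of the joint L\'evy measure of $(L_1,L_2)$ (this is \eqref{eqn1} together with \cite{tankov2}; see Appendix~\ref{levy_copula}). Accordingly the jump measure of $(L_1,L_2)$ decomposes so that the jumps of $L_1$ accompanying a jump of $L_2$ of size $z_2$ are governed by the kernel $\nu_{1,z_2}$, while the remaining purely compensated part is again $\CY$–orthogonal and drops out as above; since $\eta_2$ is $(\CY_t)$–adapted under $\QQ$, one more conditional–Fubini/tower step (handling the compensated part through $\tilde\eta_2$ and the compensator through Corollary~\ref{cor23}) gives
\begin{equation*}
\EE^\QQ\Bigl[\int_0^t\!\!\int_{\RR_+}V(s^-)\bigl[f(X(s^-)+z_1)-f(X(s^-))\bigr]\eta_1(dz_1,ds)\,\Big|\,\CY_t\Bigr]=\int_0^t\la\rho_{s^-},\OPER_{z_2}f\ra\,\eta_2(dz_2,ds),
\end{equation*}
with $\OPER_{z_2}f(x)=\int_{\RR_+}[f(x+z_1)-f(x)]\,\nu_{1,z_2}(dz_1)$; collecting the four contributions gives \eqref{variational}. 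I expect this last step to be the main obstacle: converting the unobservable jump measure $\eta_1$ into an integral against the observable $\eta_2$ with the correct kernel requires the precise disintegration furnished by the twice–differentiable copula and a careful separation of the jumps of $L_1$ into the part carried by the atoms of $\eta_2$ and the part strongly orthogonal to $\CY$, together with the integrability checks (finiteness of $\nu_1,\nu_2$, boundedness of $f,f',f''$, Lipschitz growth of $b$ and $g$, and the moment bounds on $V$ from Proposition~\ref{above}) needed to legitimise the conditional–Fubini interchanges, the commutation of $\EE^\QQ[\cdot\mid\CY_t]$ with the stochastic integrals against $Y^c$ and $\tilde\eta_2$, and the use of martingale representation for $Y$.
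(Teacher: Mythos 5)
Your overall strategy coincides with the paper's: expand $V(t)f(X(t))$ by the product/It\^o--L\'evy formula under $\QQ$, take $\EE^\QQ[\,\cdot\mid\CY_t]$, and identify the four terms, the only delicate point being the conversion of the unobservable jump contribution of $L_1$ into an integral against the observable $\eta_2$. Where you differ is in how that crux is organized. The paper never works with the marginal jump measure $\eta_1$: it represents $L_1$, conditionally on the jumps of $L_2$, as $L_1(t)=\sum_{i=1}^{N(t)}Y^{1,i}_{Y_{2,i}}$ --- a sum over the (observable) jump times of $L_2$ in which each summand has the conditional law determined by the copula kernel $\nu_{1,z_2}$ --- and then shows by an explicit tower-property computation (\eqref{newmar}--\eqref{mart}), conditioning on $\{N(t)=k,\,(Y_{2,1},\dots,Y_{2,k})\}\subset\CY_t$, that the residual $\tilde M=J-R$ has vanishing $\CY_t$-conditional expectation. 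Your route via the disintegration $\nu(dz_1,dz_2)=\nu_{1,z_2}(dz_1)\,\nu_2(dz_2)$ is the same identity in different clothing, but the sentence ``the remaining purely compensated part is again $\CY$-orthogonal and drops out'' is doing real work that you have not supplied: $J(t)-R(t)$ is not a compensated integral against a noise independent of $\CY$, and its $\CY_t$-conditional expectation vanishes only because (i) the model implicitly assumes $L_1$ and $L_2$ have only common jumps (this restriction is stated in Appendix~\ref{levy_copula}), so there is no leftover drift coming from jumps of $L_1$ unaccompanied by jumps of $L_2$, and (ii) the conditional mean of each increment $f(X(s_i^-)+Y^{1,i}_{Y_{2,i}})-f(X(s_i^-))$ given $\CY_t$ is exactly the $\nu_{1,Y_{2,i}}$-integral, which is precisely the content of \eqref{newmar}. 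You correctly identify this as the main obstacle, but the orthogonality/martingale-representation language should be replaced by (or supplemented with) that explicit conditioning computation. The remaining terms match the paper up to presentation: the commutation of $\EE^\QQ[\cdot\mid\CY_t]$ with $\int\cdot\,dY^c$ is taken from \cite[Lemma 1.2]{Bor89} plus Corollary~\ref{cor23}, and the elimination of $\int_0^tV(s^-)\,dM(s)$ is done by testing against the total set of exponential martingales as in \cite{BaC09} rather than by your strong-orthogonality argument; note also that in the paper $L_0$ is a compensated pure-jump process, so $M(t)=\int_0^tf'(X(s))\,dL_0(s)$ carries no Gaussian part.
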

\del{
\begin{rem}
The operator $A_0$ is the infinitesimal generator of the Markovian semigroup of the \levy process, see appendix \ref{symbol} for a more detailed description.
\end{rem}
}
\begin{rem}
Since $\nu_1$ and $\nu_2$ are finite \levy measures, the operator
$\OPER _z:H_2^s(\RR)\to H_2^s(\RR)$ is bounded for all $z\in\RR$
and $s\in\RR$. This can be seen by analyzing the symbol $\phi_z$
associated to $\OPER_z$ defined as
$$
 \phi_{z}(\xi)=  \int_{\RR_+} \lk( e^ {iz_1\xi} -1\rk)\, h(z_1,z)\,\nu_1(dz_1).
 $$
In fact, calculating the modulus of the symbol $\phi_z$
$$\lvert \phi_{z}(\xi)\rvert := \lk|\int_{\RR_+} \lk( e^ {iz_1\xi} -1\rk)\, h(z_1,z)\,\nu_1(dz_1)\rk| \le 2\, \int_{\RR_+} \, |h(z_1,z)|\,\nu_1(dz_1)<\infty,
$$
we see that $|\phi_{z}(\xi)|\le C$ for all $\xi\in\RR$. Therefore, $ \Phi_{z}:L^2(\RR)\to L^2 (\RR)$ defined by
$$
\lk( \Phi_z u \rk)(\xi):=\phi_z(\xi)\, u(\xi),\quad \xi\in \RR, \,\, u\in L^2 (\RR),
$$
is a bounded operator.
Using the
spectral Theorem (see e.g.\ \cite[Theorem 4.9, p.\ 30]{engel}) one
sees, that $\Phi_{z}$ acting on $L^2(\RR)$ as a multiplication operator corresponds via the
Fourier transform to $\OPER _z$ acting on $L^2(\RR)$.
Next, the operator $\CF^ {-1} (1+|\xi|^ 2 )^ \frac s2\CF$ is an isometry
from $H^ s_2(\RR)$ to $L^ 2(\RR)$.
Hence,  $\Phi_{z}$ is also bounded on
$H_2^s(\RR)$.
This
implies that $\OPER_z:H_2^s(\RR)\to H_2^s(\RR)$ is bounded for all
$z\in\RR$ and $s\in\RR$.
\end{rem}
\begin{proof}
Let  $\lambda_1=\nu_1(\RR_+)$ and $\lambda_2=\nu_2(\RR_+)$.
Next, let us denote the number of jumps of $L_2 $ in the time interval $[0,t]$ by $N(t)$, the jumps themselves  by
$\{ Y_{2,i}:i=1,\ldots, N(t)\}$ and the jump times by $\{ s_i: i=1,\ldots,{N(t)}\}$.
Then, given the jumps of $L_2$ in the time interval $[0,t]$, $L_1(t)$ can be represented by
$$
L_1(t) = \sum_{i=1}^ {N(t)} Y_{Y_{2,i}}^ {1,i},\quad t\ge 0,
$$
where for $z\in\RR\setminus \{0\}$ the random variable  $Y^ {1}_z$ is distributed as $\nu_{1,z}/\lambda_{1,z}$, $\lambda_{1,z}=\nu_{1,z}(\RR^+)$. {More rigorously, conditioned on the jumps of $L_2(t)$, $L_1(t)$ can be viewed as a compound Poisson process having same jump times of $L_2(t)$ and the size of each jump $Y^ {1,i}$ of $L_1(t)$ depends on the size of the jump $Y_{2,i}$ at time $s_i$.

By conditioning the process $L_1$ given $L_2$},  we can write
\DEQSZ\label{jumps_re}
\nonumber 
&&f(X (t)) = f(X _0) + \int_0 ^t\lk( \CA_0\,f\rk) (X (s))\, ds + M(t)
\\ && {} +  \sum_{1\le i\le N (t)} f( X ({s_i ^-}) +  Y_{Y_{2,i}}^ {1,i}) -f( X ({s_i ^-}) )
\nonumber\\&=& f(X _0) + \int_0 ^t\lk( \CA_0\,f\rk) (X (s))\, ds + M(t)
\nonumber\\ && {} +  \int_0^t \int_{\RR^+_0} \int_{\RR^+_0} \lk[f( X ({s ^-}) +z_1) -f( X ({s^-}) )\rk]\nu_{1,z_2}(dz_1)\, \eta_2(dz_2,ds)
\nonumber\\
&&{}+\sum_{1\le i\le N (t)}f( X ({s_i ^-}) +  Y_{Y_{2,i}}^ {1,i}) -f( X ({s_i ^-}) )\\
&&{}-\int_0^t \int_{\RR^+_0} \int_{\RR^+_0}\lk[f( X ({s ^-}) +z_1) -f( X ({s^-}) )\rk]\nu_{1,z_2}(dz_1)\, \eta_2(dz_2,ds)
\nonumber\\
\nonumber
&=& f(X _0) + \int_0 ^t\lk( \CA_0\,f\rk) (X (s))\, ds + M(t)+ \tilde M(t)
\nonumber\\
\nonumber
&&{}+\int_0^t \int_{\RR^+_0} \int_{\RR^+_0}\lk[f( X ({s ^-}) +z_1) -f( X ({s^-}) )\rk]\nu_{1,z_2}(dz_1)\, \eta_2(dz_2,ds)
,
\EEQSZ
where
$$ M(t)=\int_0^t f'(X(s))\, dL_0(s),\quad t\ge 0,
$$
and
\DEQS
\lqq{ \tilde M(t)=J(t)-R(t)= \sum_{1\le i\le N (t)}  f( X ({s_i ^-}) +  Y_{Y_{2,i}}^ {1,i}) -f( X ({s_i ^-}) ) }
&&\\
&&{}-  \int_0^t \int_{\RR^+_0} \int_{\RR^+_0} \lk[f( X ({s ^-}) +z_1)
-f( X ({s^-}) )\rk]\nu_{1,z_2}(dz_1)\, \eta_2(dz_2,ds) ,\quad t\ge 0.
\EEQS
Since $L_0$ be a compensated pure jump  \levy process, the process  $M=\{M(t): t\geq 0\}$ is a martingale over $(\Omega,\CF,(\CF_t)_{t\ge 0},\QQ)$. First, observe that we can write for a function $\phi$ 
$$
\sum_{i=1}^ {N(t)} \phi(Y_{2,i})= \sum_{i=1}^ {N(t)} \int_{\RR^+} \phi(z_2) \eta_2(dz_2,\{s_i\}).
$$
In addition, we have by the tower property
\DEQSZ\label{newmar}
&&
 \EE\lk[J(t)\,\Big|\, k=N(t), (z_{2,1},\ldots,z_{2,k})=(Y_{2,1},\ldots,Y_{2,k})\rk]
 \\&=&
 \nonumber
 \EE\lk[\sum_{1\le i\le k}  f( X ({s_i ^-}) +  Y_{Y_{2,i}}^ {1,i}) -f( X ({s_i ^-}) )\,\Big|\, k=N(t), (z_{2,1},\ldots,z_{2,k})=(Y_{2,1},\ldots,Y_{2,k})\rk]
\\&=&
\nonumber
\EE\lk[ \sum_{1\le i\le k} \EE\lk[   f( X ({s_i ^-}) +  Y_{Y_{2,i}}^ {1,i}) -f( X ({s_i ^-}) )\,\big|\, Y_{2,i}= z_{2,i} \rk]\,\Big|\, k=N(t)\rk]
\\&=&
\nonumber
 \EE\lk[ \sum_{1\le i\le k}  \int_{\RR^+} \lk\{  f( X ({s_i ^-}) + z_1 ) -f( X ({s_i ^-})) \rk\} \nu_{1,z_{ 2,i}} (dz_1)\,\Big|\,  k=N(t)\rk].
\EEQSZ
Using the representation above, we get
\DEQS
\ldots &=&
\sum_{i=1}^ {N(t)} \int_{\RR^ +}  \int_{\RR^ +} \lk\{  f( X ({s_i ^-}) + z_1 ) -f( X ({s_i ^-})) \rk\} \nu_{1,z_2} (dz_1) \eta_2(dz_2,\{s_i\}).
\EEQS
Replacing the summation by the integral with respect to the time we get
\DEQS
\ldots &=& \int_0^t \int_{\RR^+_0} \int_{\RR^+_0} \lk[f( X ({s ^-}) +z_1)
-f( X ({s^-}) )\rk]\nu_{1,z_2}(dz_1)\, \eta_2(dz_2,ds)=R(t)
.\EEQS
Now we want to show that $\EE^\QQ\lk[\tilde M(t)\mid \CY_t\rk]=0$, $t\ge 0$. Fix $t\ge 0$. Then
\DEQSZ\label{mart}
\EE^\QQ\lk[\tilde M(t)\mid \CY_t\rk]&=&\EE^\QQ\lk[J(t)-R(t)\mid \CY_t\rk]=\EE^\QQ\lk[J(t)\mid \CY_t\rk]-\EE^\QQ\lk[R(t)\mid \CY_t\rk]
\nonumber\\&=&\EE^\QQ\lk[\EE^\QQ\lk[J(t)\mid\CF_{1}\rk]\mid \CY_t\rk]-\EE^\QQ\lk[R(t)\mid \CY_t\rk]
\nonumber\\&=&\EE^\QQ\lk[R(t)\mid \CY_t\rk]-\EE^\QQ\lk[R(t)\mid \CY_t\rk]=0,
 \EEQSZ
where $\CF_{1}=\{k=N(t), (z_{2,1},\ldots,z_{2,k})=(Y_{2,1},\ldots,Y_{2,k}):k\in\mathbb{N}\}\subseteq \CY_t$.
 \del{This result implies that $\tilde{M}=\{ \tilde{M}(t):t\ge 0\}$ is  a martingale on $(\Omega,\CF,\CF_t,\QQ)$. In addition, since $L$ is a \levy process one can show that under $\QQ$, by applying Corollary \ref{cor23} and Remark \ref{rem2_2},
$\EE^\QQ\lk[
M(t)\mid \CY\rk]=0$, $t\ge 0$.
}
Under the new probability measure $\QQ$, the process $V=\{ V (t):t\ge 0\}$ solves the following SDE
$$
dV (t) = V (t)\,g(X(t))\,  dY^ {c} (t),\, \quad t>0, \quad V (0)=1,
$$
where $Y^ {c} $ denotes the continuous part of $Y$ which is a Brownian motion under $\QQ$, adapted to $(\CY_t)_{t\ge 0}$.
Since $V$ is driven by the continuous part of $Y$, and $L_0$ independent from $W_2$, no correlation terms involving the process $V$ appears.
Thus, we get
\DEQS
\lqq{
 f(X (t))\, V (t)=  f(X_0)+ \int_0^ t  V ({{s^-}})\,dM (s)+ \int_0^ t  V ({{s^-}})\,d\tilde{M} (s)
  }
&&
\\
&+& 
 \int_0^t \int_{\RR^+_0}  V ({{s^-}})\, \int_{\RR^+_0} \lk[ f( X ({s^-}) + z_1) -f( X ({s^-}) )\rk] \, \nu_{1, z_2} (dz_1)\, \eta_2(dz_2,ds)
\\ &+&{}
\int_0^ t V ({{s}})\, g(X ({{s}}))\, f(X ({{s}}))\, dY^ {c}(s)
 + \int_0^ t V ({{s}}) \lk( \CA_0 f\rk) (X ({{s}})) \,
 ds
 .
\EEQS
%
Taking into account that $M$ is a martingales over  $(\Omega,\CF,(\CY_t)_{t\ge 0},\QQ)$ with \eqref{mart} and taking conditional expectation together with the Fubini Theorem \cite[Theorem 1.1.8]{applebaum} to the entity above, we get
\DEQS
\lqq{
 \EE^ {\QQ} \lk[ f(X (t)) V (t)\mid \CY_t\rk] =  \EE^ {\QQ} \lk[f(X_0)\mid \CY_0\rk]+ \underbrace{\EE^ {\QQ} \lk[ \int_0^ t  V ({{s^-}})\,dM (s)\mid \CY_t\rk]}_{=0} }
&&
\\
& &{}
 + \underbrace{\EE^ {\QQ} \lk[ \int_0^ t  V ({{s^-}})\,d\tilde{M} (s)\mid \CY_t\rk]}_{=0} +\EE^ {\QQ} \lk[ \int_0^t V ({{s}})  \, \lk( \CA_0 f\rk) (X ({{s}}))\, ds\mid \CY_t\rk]
  \\ && + \EE^ {\QQ} \lk[ \int_0^ t V ({{s}})\,  g(X ({{s}}))\,f(X ({{s}}))\, dY^c_s\mid \CY_t\rk]+ \EE^ {\QQ}\lk[  \int_0^t  \int_{\RR^+_0}\rk.
\\ && \lk.V ({{s^-}})\, \int_{\RR^+_0}\lk[ f( X ({s ^-}) + z_1) -f( X ({s ^-} ))\rk]  \, \nu_{1,z_2} (dz_1)\eta_2(dz_2,ds) \mid \CY_t\rk]
\EEQS
 \DEQS
& =&
 f(X_0) +   \int_0^ t\EE^ {\QQ} \lk[ V ({{s}} ) \lk( \CA_0 f\rk) (X ({{s}}))\mid \CY_t\rk]\, ds+\EE^ {\QQ}\lk[ \int_0^t  \int_{\RR^+_0}\rk.
 \\
&& {} \lk. V ({{s^-}})\,\int_{\RR^+_0}
 \lk[ f( X ({s ^-}) + z_1) -f( X ({s ^-} ))\rk]  \, \nu_{1,z_2} (dz_1)\eta_2(dz_2,ds) \mid \CY_t\rk]
 \\
&&{}+ \EE^ {\QQ} \lk[ \int_0^ t V ({{s}})\, g(X ({{s}}))\,f(X ({{s}}))\, dY^c_s\mid \CY_t\rk]
. \EEQS
By imitating the calculations \eqref{newmar} and  \eqref{mart} for $\int_0^ t  V ({{s^-}})\,d\tilde{M} (s)=\int_0^ t  V ({{s^-}})\,dJ (s)-\int_0^ t  V ({{s^-}})\,dR (s)$, we could show that
$$
\mathrm{E}^ {\mathrm{Q}}\big[ \int_0^ t  V ({{s^-}})\,d\tilde{M }(s)| \mathcal{Y}_t\big]=0.
$$
In the next step  we show that
$$\EE^ {\QQ} \lk[ \int_0^ t  V ({{s^-}})\,dM (s)\mid \CY_t\rk]=0.
$$
Since $\int_0^ t  V ({{s^-}})\,dM (s)$ is $\CF_t$-measurable, it follows from  Remark 2.2
\begin{equation}
\begin{split}
\label{zero1}
\EE^ {\QQ}\big[ \int_0^ t  V ({{s^-}})\,dM (s)| \mathcal{Y}_t\big]=\EE^ {\QQ}\big[ \int_0^ t  V ({{s^-}})f'(X(s^-))\, dL_0(s)| \mathcal{Y}_t\big]
\\=\EE^ {\QQ}\big[ \int_0^ t  V ({{s^-}})f'(X(s^-))\, dL_0(s)| \mathcal{Y}\big].
\end{split}
\end{equation}
By following to  \cite[p.\ 60,
the proof of the part (ii) of Lemma 3.21 ]{BaC09} similar arguments we get
\DEQSZ
\label{zero2}
&&\EE^ {\QQ}\big[ \ep_t\EE^ {\QQ}\big[ \int_0^ t  V ({{s^-}})f'(X(s^-))\, dL_0(s)| \mathcal{Y}\big]\big]=\EE^ {\QQ}\big[ \ep_t \int_0^ t  V ({{s^-}})f'(X(s^-))\, dL_0(s)\big]
\nonumber
\\&=&\EE^ {\QQ}\big[ \int_0^ t  V ({{s^-}})f'(X(s^-))\, dL_0(s)\big]
\nonumber
\\&+&\EE^ {\QQ}\langle\int_0^ t i\ep_sr_s\, dY^c(s),\int_0^ .V ({{s^-}})f'(X(s^-))\, dL_0(s)\rangle_t
\nonumber
\\&=&\EE^ {\QQ}\big[ \int_0^ t  V ({{s^-}})f'(X(s^-))\, dL_0(s)\big]
\nonumber
\\&+&\EE^ {\QQ}\int_0^ ti\ep_sr_s\int_0^ .V ({{s^-}})f'(X(s^-))\langle\, dY^c(s),\, dL_0(s)\rangle_t
=0,
\EEQSZ
where $\ep_t=1+\int_0^ ti\ep_sr_s\, dY^c(s)$ is a member of the total set define in \cite[p.\ 355,
(B.19)]{BaC09}. This implies that $\EE^ {\QQ} \lk[ \int_0^ t  V ({{s^-}})\,dM (s)\mid \CY_t\rk]=0$ for any $s\in [0,t]$.
 Since $ X ({{s}})$ and $ V({{s}})$ are  $\CF_{{s}}$--measurable  we have
 \DEQS
 \EE ^ {\QQ} \lk[ V ({{s}}) \,\lk( \CA_0 f\rk) (X ({{s}}))\mid \CY_t\rk]
 &
{=}&\EE ^{\QQ} \lk[ V ({{s}}) \,\lk( \CA_0 f\rk) (X ({{s}}))\mid \CY_{{s}}\rk].
\EEQS
 Since $Y^ {c}(t)$ is $\CY_t$-measurable and is a $\QQ$--Brownian motion, it follows from \cite[Lemma 1.2]{Bor89} and Corollary \ref{cor23}
 \DEQSZ
\lqq{ \EE^ {\QQ}\lk[ \int_0^ t V ({{s}})g(X ({{s}}))\, f(X ({{s}}))\,dY^c_s\mid \CY_t\rk]}
 \nonumber
\\
&=&\int_0^ t  \EE^ {\QQ} \lk[V({{s}})g(X ({{s}}))\, f(X ({{s}}))\rk. \lk.\,\mid \CY_t\rk]dY^c_s
 \nonumber
 \\&=&
 \int_0^ t  \EE^ {\QQ} \lk[ V ({{s}})g(X ({{s}}))\, f(X ({{s}}))\,\mid \CY_{{s}}\rk] dY^c_s.
\EEQSZ
\del{ Due to the fact that $f'(X(s^-))$ is a  $\mathcal{F}_{{s^-}}$--measurable random variable and $\Delta L_0=L_0(s)-L_0(s^-)$ is independent from $\mathcal{F}_{{s^-}}$,
again it follows  by Corollary 2.1, we get
\DEQSZ
\EE^ {\QQ}\big[ \int_0^ t  V ({{s^-}})f'(X(s^-))\, dL_0(s)| \mathcal{Y}_t\big]=\int_0^ t \EE^ {\QQ}\big[ V ({{s^-}})f'(X(s^-))| \mathcal{Y}_t\big]\, dL_0(s)
\\&=&\int_0^ t \EE^ {\QQ}\big[ V ({{s^-}})f'(X(s^-))| \mathcal{Y}_s\big]\, dL_0(s)
\EEQSZ
}
 Due to the fact that $V({{s^-}})\int_{\RR_0^+} \lk[ f( X ({s ^-} )+ y) -f( X ({s ^-} ) )\rk]\, \nu_{1,z_2}(dy)$ is a  $\CF_{{s^-}}$--measurable random variable and $\Delta L_2=L_2(s)-L_2(s^-)$ is independent from $\CF_{{s^-}}$,
 it follows again by Corollary \ref{cor23} 
 \DEQS
\lqq{ \EE^{\QQ}\lk[ \int_0^t \int_{\RR_0^+} V ({{s^-}})\,
 \int_{\RR_0^+} \lk[ f( X ({s ^-}) + z_1) -f( X ({s ^-} ))\rk]  \, \nu_{1,z_2} (dz_1)\eta_2(dz_2,ds) \mid \CY_t\rk]}
  \\
\lqq{= \int_0^t \int_{\RR_0^+}  \EE^ {\QQ}\lk[ V({{s^-}})\,
  \rk.}
   \\&& \lk.
\int_{\RR_0^+} \lk[ f( X ({s ^-}) + z_1) -f( X ({s ^-} ))\rk]  \, \nu_{1,z_2} (dz_1)\mid \CY_t\rk]  \eta_2(dz_2,ds)
\\
\lqq{ =\int_0^t  \int_{\RR_0^+}\int_{\RR_0^+} \EE^ {\QQ}\lk[  V({{s^-}})\,
  \rk.} \\&& \lk.
   \lk[ f( X ({s ^-}) + z_1) -f( X ({s ^-} ))\rk]  \, \nu_{1,z_2}(dz_1) \mid \CY_{{s^-}}\rk] \eta_2(dz_2,ds).
 \EEQS
 %
By collecting all the results, one can conclude the theorem.
\end{proof}
In the case where the \levy measure of $L$ is $\sigma$-finite,
the copula  has to satisfy certain scaling properties. Namely, we have to take $H$ such that 
\DEQSZ\label{scaling}
\lim_{\gamma\to\infty} { H(\gamma u,\gamma v )\over H(\gamma,\gamma)} =  H(u,v),\quad u,v\in\RR.
\EEQSZ

Now we can formulate the following Theorem for the case where \levy measure of $L$ is $\sigma$-finite.
\begin{thm}\label{copula_inf}
Let $L_0$ be a \levy process with symbol $\psi$.
Let $\nu_1$ and $\nu_2$ be two $\sigma$-finite \levy measures  such that
\DEQSZ\label{asslevym}
\int_{|z|\le 1}|z|\,\nu_1(dz)+\int_{|z|\le 1}|z|\,\nu_2(dz)<\infty
.\EEQSZ

Let $H$ be a twice differentiable copula which satisfies the scaling property \eqref{scaling}.
Let
\DEQS
\nu_{1,{z_2}}(U) &=& \int_U h(z_1,z_2)\, \nu_1(dz_1),
\quad U\in\CBB (\RR\setminus \{ 0\}),
\EEQS
where
$$
h(z_1,z_2):=
{ {\partial ^ 2 \over \partial u_1\partial u_2} H(u_1,u_2) \Big|_{u_1=U_1 (z_1)\atop u_2=U_2 (z_2)}
 },
$$
and $U_1$, $U_2$ are the tail integrals of $\nu_1$ and $\nu_2$, respectively.
Let $g,b:\RR\to\RR$ and $\sigma:\RR\to\RR$ are Lipschitz continuous mappings and $g\in C^{(2)}_b(\RR)$.
The un-normalized conditional density $\rho$ is a unique solution to the equation,
\DEQSZ\label{variational1}
\\
\nonumber
 \la \rho_t,f\ra  &=& \la \rho_0,f\ra +  \int_0^ t  \la
\rho_{{s^-}}\, ,f\cdot g \ra \, dY_s^ c
\\\nonumber  && {}+ \int_0^t \la \rho_{{s^-}},\CA_0  f\ra\, ds
 + \int_0^ t\int_\RR   
 \la \rho_{{s^-}}, \OPER _{z_2}  f\, \ra\, \eta_2(dz_2,ds) 
, \quad \forall f\in C ^{(2)}_b(\RR),\EEQSZ
where $\eta_2$ is the Poisson random measure associated to $L_2$ and
the operators $\OPER=\{\OPER_z: z\in\RR\setminus\{0\}\}$, $\CA_0$ are given  by
$$\OPER_{z} f(x)=   \int_\RR
\lk[f(x+z_1)-f(x)\rk] \nu_{1,z}(dz_1),\quad x\in\RR,
\,\,z\in\RR\setminus\{0\},\,\,
$$
 $$
\CA_0 f (x)= b(x)f'(x) + A_0 f(x),  \quad x\in \mathbb{R},\quad f\in C ^{(2)}_b(\RR).
$$
Here $A_0$ is the pseudo--differential operator associated with $L_0$.
%
\end{thm}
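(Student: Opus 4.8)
The plan is to prove \eqref{variational1} for existence by rerunning, with more care, the computation in the proof of Theorem~\ref{copula_finite}, and to obtain uniqueness by casting \eqref{variational1} as a linear stochastic evolution equation in a Bessel potential space, in the spirit of Appendix~\ref{aA}.

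For existence I would first exploit the integrability hypothesis \eqref{asslevym}: it says that the pure jump process $L=(L_1,L_2)$ has finite variation near the origin, so that for every $f\in C^{(2)}_b(\RR)$ the contributions of the jumps of $L_1$ to $f(X(t))$ can be summed absolutely and need no compensation; consequently the conditional It\^o--L\'evy decomposition of $f(X(t))$ used in the proof of Theorem~\ref{copula_finite} (in which $L_1$ is treated conditionally on $L_2$ via the copula $H$) carries over word for word, provided $\OPER_{z_2}f$ is well defined for $\nu_2$--a.e.\ $z_2$ and $z_2\mapsto\OPER_{z_2}f$ is suitably integrable against $\eta_2$. This is where the scaling property \eqref{scaling} enters: since $U_1(z_1)\to\pm\infty$ as $z_1\to0$ and $U_2(z_2)\to\pm\infty$ as $z_2\to0$, the regular--variation behaviour of $H$ near the boundary of $[-\infty,\infty]^2$ encoded in \eqref{scaling} yields a quantitative decay estimate for $h(z_1,z_2)$; combined with \eqref{asslevym} and the elementary bound $|f(x+z_1)-f(x)|\le\|f'\|_\infty(|z_1|\wedge1)+2\|f\|_\infty 1_{\{|z_1|>1\}}$, this makes $\OPER_{z_2}f(x)=\int_\RR[f(x+z_1)-f(x)]\,\nu_{1,z_2}(dz_1)$ absolutely convergent, with $\|\OPER_{z_2}f\|_\infty$ bounded by a quantity integrable in $z_2$ against $\nu_2$ in both $L^1$ and $L^2$. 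Granting this, the rest of the argument is verbatim that of Theorem~\ref{copula_finite}: build $Z$, $V$ and the equivalent measure $\QQ$ through \eqref{zsolves}--\eqref{defineQ}; apply Girsanov's theorem (Proposition~\ref{above}); expand $f(X(t))V(t)$ by It\^o's formula; and take $\EE^\QQ[\,\cdot\mid\CY_t]$, invoking Corollary~\ref{cor23}, Remark~\ref{rem2_2}, Fubini's theorem, and the $\QQ$--martingale property of the martingale parts. Since $V$ is driven by the continuous part $Y^c$ of $Y$ while $L_0$ and $L_2$ are pure jump processes independent of $W_2$, all quadratic--covariation corrections vanish and \eqref{variational1} follows.

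For uniqueness I would place \eqref{variational1} in the evolution--equation framework of Appendix~\ref{aA}: regard $t\mapsto\rho_t$ as an $H_2^{-m}(\RR)$--valued process, with $m$ chosen so that bounded finite measures on $\RR$ embed into $H_2^{-m}(\RR)$, and read \eqref{variational1} weakly as
$$
d\rho_t=\CA_0^\ast\rho_t\,dt+g\,\rho_t\,dY_t^c+\int_\RR\OPER_{z_2}^\ast\rho_{t^-}\,\eta_2(dz_2,dt),\qquad\rho_0=\pi_0 .
$$
Here $\CA_0^\ast$ is the adjoint of $\CA_0=b\,\partial_x+A_0$; since $A_0$ is the pseudo--differential operator with symbol $\psi$, which has non-positive real part, $\CA_0^\ast$ satisfies the dissipativity-type estimates on the Sobolev scale recalled in Appendix~\ref{aA} and \cite{hsym}; multiplication by $g$ is bounded on $H_2^{-m}(\RR)$ because $g\in C^{(2)}_b(\RR)$; and, as in the Remark following Theorem~\ref{copula_finite}, each $\OPER_{z_2}$ is the Fourier multiplier with symbol $\phi_{z_2}(\xi)=\int_\RR(e^{iz_1\xi}-1)\,h(z_1,z_2)\,\nu_1(dz_1)$, which under \eqref{asslevym} and \eqref{scaling} obeys $|\phi_{z_2}(\xi)|\le C(z_2)(1+|\xi|)$ with $C(\cdot)\in L^2(\nu_2)$, so that the $\eta_2$--integral above is a genuine $H_2^{-m}(\RR)$--valued stochastic integral. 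Uniqueness then follows by the standard energy argument: for two solutions $\rho^{(1)},\rho^{(2)}$ with the same initial value, the difference $\delta_t$ solves the homogeneous equation, and applying It\^o's formula to $|\delta_t|_{H_2^{-m}}^2$, using the dissipativity of $\CA_0^\ast$, the $L^2(\nu_2)$--bound on the $\OPER_{z_2}^\ast$, the boundedness of multiplication by $g$, the Burkholder--Davis--Gundy inequality for the $dY^c$ and $\eta_2(dz_2,dt)$ martingale terms, and Gronwall's lemma, gives $\EE\,|\delta_t|_{H_2^{-m}}^2=0$ for every $t\ge0$, that is $\rho^{(1)}=\rho^{(2)}$.

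I expect the main obstacle to be precisely this interplay between the infinite activity of $\nu_1,\nu_2$ and the copula: extracting from the single scaling hypothesis \eqref{scaling} the quantitative control on the mixed second derivative $h$ of $H$ near the boundary of $[-\infty,\infty]^2$ that legitimises $\OPER_{z_2}$ as an operator with $L^1(\nu_2)$-- and $L^2(\nu_2)$--integrable norm. That single estimate is what both justifies the conditional It\^o formula in the infinite--measure regime and feeds the dissipativity/Gronwall argument for uniqueness; once it is available, the rest is a rerun of the proof of Theorem~\ref{copula_finite} or of the classical variational theory of linear filtering equations collected in the appendix.
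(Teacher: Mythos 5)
Your strategy for existence --- rerunning the finite-activity computation directly under \eqref{asslevym} --- diverges from the paper's, which instead truncates the small jumps, applies Theorem \ref{copula_finite} to the truncated system $(X^\ep,Y^\ep,V^\ep,\QQ_\ep)$ with $\nu_i^\ep=\nu_i(\cdot\cap\RR\setminus(-\ep,\ep))$, and then passes to the limit $\ep\to0$. This is not merely a stylistic difference: the proof of Theorem \ref{copula_finite} hinges on the compound-Poisson representation $L_1(t)=\sum_{i=1}^{N(t)}Y^{1,i}_{Y_{2,i}}$ and on conditioning on the event $\{k=N(t),\,(z_{2,1},\dots,z_{2,k})=(Y_{2,1},\dots,Y_{2,k})\}$ (display \eqref{newmar}), both of which require $\nu_2(\RR_+)<\infty$ so that $L_2$ has finitely many jumps on $[0,t]$. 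In the $\sigma$-finite regime $N(t)=\infty$ a.s., and the decomposition does not carry over ``word for word'' even though \eqref{asslevym} gives absolute summability of the jumps; your argument would have to rebuild the conditional structure of $L_1$ given $L_2$ from the bivariate Poisson random measure rather than from a finite jump configuration, and no substitute for that step is offered. Moreover, the limit passage that your approach avoids is precisely the technically substantial part of the paper's proof: one must show that $\EE^{\QQ_\ep}[f(X^\ep(t))V^\ep(t)\mid\CY_t^\ep]$ converges to $\EE^{\QQ}[f(X(t))V(t)\mid\CY_t]$, where the measure, the integrand \emph{and} the conditioning $\sigma$-field all vary with $\ep$; this is handled by Theorem \ref{mainC} of Appendix \ref{aC} (L\'evy upward theorem plus Robertson's identity for reciprocals of conditional expectations), together with Burkholder--Davis--Gundy and dominated convergence estimates in which \eqref{asslevym} kills the contribution of jumps in $(-\ep,\ep)$. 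So there is a genuine gap: the ``verbatim'' claim conceals exactly the difficulty that forces the paper into the approximation scheme.

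On uniqueness, be aware that the paper's displayed proof of Theorem \ref{copula_inf} does not actually establish uniqueness; that question is deferred to Theorem \ref{measurevalued}, where the Zakai equation is recast as an $H^{1/2}_2(\RR)$-valued evolution equation much as you propose. However, your claimed multiplier bound $|\phi_{z_2}(\xi)|\le C(z_2)(1+|\xi|)$ with $C\in L^p(\nu_2)$ does \emph{not} follow from \eqref{scaling} and \eqref{asslevym} alone: it is an additional hypothesis, imposed in Theorem \ref{measurevalued} through the Blumenthal--Getoor conditions \eqref{app1}--\eqref{app2} and verified by hand for the Clayton copula in Corollary \ref{spec_ex}. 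Your energy/Gronwall argument is therefore in the spirit of the paper's later development, but it cannot be run under the hypotheses of Theorem \ref{copula_inf} as stated, and the single estimate you identify as ``the main obstacle'' is in fact not derivable from those hypotheses.
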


\begin{rem}\label{xiinverse}
\del{We already mentioned that
$$
\xi(t) \pi_t(f)=\rho_t(f),\quad t\ge 0,
$$ and
$$
 \pi_t(f)=\rho_t(f)\xi^ {-1}(t),\quad t\ge 0.
$$
where $\xi$ is defined in $\eqref{xiintro}$.}
\del{\DEQSZ\label{xiintro}
\xi(t)=\EE^\QQ\lk[ V(t)\mid \CY_t\rk],\quad t\ge 0.
\EEQSZ}
By taking $f=1$ in \eqref{variational1} and taking into account that $\CA_0 1=0$, $\OPER _z1=0$, it follows that  $\xi$ solves
\DEQS
\xi(t) &=& 1+\int_0^t \rho_{s}(g) \, dY_s^c=1+\int_0^t \rho_{s^-}(1) \, \pi_{{s}}(g) \, dY_s^c
\\
&=& 1+\int_0^t \xi(s) \, \pi_{{s}}(g) \, dY_s^c ,\quad t\ge 0.
\EEQS
Second and third equalities hold due to Kallianpur-Streibel formula and the fact that $\rho_{s}(1)=\xi(s)$ respectively.
Hence, the inverse $\varsigma=\{ \varsigma(t):t\ge 0\}$ of $\xi$ is given by
\DEQS
\varsigma (t) &=&\varsigma(0)+ \int_0^t \varsigma({s^-})\pi_s(g)^2\, ds -\int_0^t \pi_s(g) dY_s^c
\\ &=& \varsigma(0)+ \int_0^t \varsigma({s})^3\rho_s(g)^2\, ds -\int_0^t \varsigma({s})^2 \rho_s(g) dY_s^c
. \EEQS
Since $g\in C^{(2)}(\RR)$, one can easily show that $\rho(g)=\{
\rho_t(g):t\ge 0\}$ is bounded by $|
g|_{C_b}$ and is well defined. Due to this fact and the Novikov condition, we can see that the process $\varsigma$ exists and well defined.
\end{rem}

\begin{proof}
To start with the proof, firstly let us cut off the small jumps from the \levy process $L$.
For any $\ep>0$, let $\nu_1^\ep=\nu_1(\cdot \cap
\RR\setminus(-\ep,\ep))$, $\nu_2^\ep=\nu_2(\cdot
\cap \RR\setminus(-\ep,\ep))$, and
$\lambda_1^\ep=\nu_1^\ep(\RR)$, $\lambda_2^\ep=\nu_2^\ep(\RR)$. We denote by $L_1^\ep$ and $L_2^
\ep$ the \levy processes corresponding to the \levy measures
$\nu_1^\ep$ and $\nu_2^\ep$, respectively. As before, $\QQ_\eps$
be a probability measure such that \DEQS { d \PP\over
d\QQ_\eps}\Big|_{\CF_t}&=& V^\eps (t),\quad t\ge 0, \EEQS where
$X^\ep$ solves
\DEQSZ\label{eqn-xep}
\lk\{ \barray  dX^ \ep(t) &=& b(X^ \ep(t))\, dt + dL_0(t)+ d L^
\ep_1(t),\quad t>0,
\\ X^\ep(0)&=& X^\ep_0\earray\rk.
\EEQSZ
and $V^\ep$ solves
\DEQSZ\label{eqn-vep} \lk\{\barray dV^ \ep(t) &{=}& V^ \ep ({{t}}) g(X^
\ep({{t}}))\,\lk[  dW_2(t) +g(X^ \ep(t))\, dt \rk]
\\ V^ \ep(0)&=&1.
\earray\rk. \EEQSZ
 Let $\rho^ \ep=\{ \rho_t^\ep:t\ge 0\}$ be the un-normalized conditional
density process given by
$$
\rho_t^ \ep(f)=\EE ^ {\QQ_\ep}  \lk[ V^ \ep(t)\, f(X^
\ep(t))\mid\CY_t^\ep\rk] ,
$$
%
and $Y^ \ep=\{ Y^ \ep(t):t\ge 0\}$ be the solution to 
 \DEQSZ\label{eqn-yep}
\lk\{ \barray  dY^ \ep(t) &=& 
g(X^ \ep(t) )\, dt + dL^ \ep_2(t)+ dW_2(t) ,\quad t>0,\\ Y^
\ep(0)&=& Y^\ep_0.\earray\rk. \EEQSZ
Notice that under the probability measure $\QQ_\ep$, the
continuous part of $Y^ \ep$ is a Brownian motion.

Let us denote the number of jumps of $L_2^ \ep$ in the time interval
$[0,t]$ by $N_\ep(t)$, the jumps themselves by $\{ Y_
{2,\ep,i}:i=1,\ldots, N_\ep(t)\}$, and the jump times by $\{
s^\ep_i: i=1,\ldots,{N_\ep(t)}\}$.
Then, 
$$
L^\ep_1(t) = \sum_{i=1}^ {N_\ep(t)} Y_{Y_
{2,\ep,i}}^ {1,\ep,i},\quad t\ge 0,
$$
where $\{ Y^ {1,\ep,i}_{Y_
{2,\ep,i}} :i=1,\ldots, N_\ep(t)\}$ is a family of independent random variables. For any $i=1,\ldots,N_\ep(t)$,
the random variable $Y^ {1,\ep,i}_{Y_
{2,\ep,i}}$ is distributed by $\nu^\ep_{1,z}/\lambda_1^\ep$ with $z=Y_
{2,\ep,i}$.
Now following the same calculations as in the proof of Theorem
\ref{copula_finite}, we get
\DEQS f(X^ \ep (t)) &=& f(X^ \ep _0) +
\int_0 ^t\lk( \CA_0\,f\rk) (X^ \ep (s))\, ds+M_\ep(t)
\\ && {} +  \sum_{1\le i\le N_\ep (t)}  f( X^ \ep ({s_i ^-}) +  Y_{Y_
{2,\ep,i}}^ {1,\ep,i}) -f( X^ \ep ({s_i ^-}) ),
\EEQS
where $M_\ep$ is a martingale and $\EE^\QQ\lk[ M_\ep(t)\mid \CY_t\rk]=0$.
Put
\DEQS
\nu^ \ep_{1,{z_2}}(U) &=& \int_{U\cap \lk[ (-\infty,-\ep]\cup [\ep,\infty)\rk]} h(z_1,z_2)\, \nu_1(dz_1),
\quad U\in\CBB (\RR).
\EEQS
Similarly as in Theorem 2.2, we denote the Poisson random measure corresponding to $L_2^\ep$ by $\eta_2^\ep$.
Thus, we can write
\DEQS
\lqq{ f(X^ \ep (t)) = f(X ^ \ep_0) + \int_0 ^t \lk( \CA_0 f\rk) (X ^ \ep(s))\,  ds+M_\ep(t)} &&
\\ && +  \sum_{1\le i\le N_\ep(t)}  f( X ^ \ep({s_i ^-}) + Y_{Y_
{2,\ep,i}}^ {1,\ep,i}) -f( X^ \ep ({s_i ^-}) )
\\ &=&  f(X ^ \ep_0) + \int_0 ^t \lk( \CA_0 f\rk) (X ^ \ep(s))\,  ds+M_\ep(t)
\\ && +  \sum_{1\le i\le N_\ep(t)}  f( X ^ \ep({s_i ^-}) + Y_{Y_
{2,\ep,i}}^ {1,\ep,i}) -f( X^ \ep ({s_i ^-}) )
\\
&{} - & \int_0^t  \int_\RR  \int_\RR \lk[ f( X^ \ep({s_i ^-})  + z_1) -f( X^ \ep ({s_i ^-}) )\rk] \, \nu^ \ep_{1, z_2 } (dz_1)\, \eta_2^\ep(dz_2,ds)
\\
& {} + &\int_0^t  \int_\RR  \int_\RR \lk[ f( X^ \ep({s_i ^-})  + z_1) -f( X^ \ep ({s_i ^-}) )\rk] \, \nu^ \ep_{1, z_2 } (dz_1)\, \eta_2^\ep(dz_2,ds)
\\
&=& f(X^ \ep _0) +  \int_0 ^t \CA_0 f(X^ \ep (s))\, ds+ M_\ep (t)+\tilde M _\ep (t)
\\ &&{}+\int_0^t  \int_\RR  \int_\RR \lk[ f( X^ \ep({s_i ^-})  + z_1) -f( X^ \ep ({s_i ^-}) )\rk] \, \nu^ \ep_{1, z_2 } (dz_1)\, \eta_2^\ep(dz_2,ds)
,
\EEQS
 By using same arguments in the proof of Theorem \eqref{copula_finite}, we can show that  for $t\ge 0$ we have $\EE^{\QQ_\ep}[\tilde M _\ep(t)|\CY_t^\ep]=0$.
 \del{and $\tilde{M}  _\ep=\{ \tilde{M}_\ep(t):t\ge 0\}$ is also a martingale on $
(\Omega,\CF,\CF_t,\QQ_\ep)$}
Next, the process $V^  \ep=\{ V^  \ep (t):t\ge 0\}$ satisfies under $\QQ_\ep$ the stochastic differential equation
$$
dV^  \ep (t) = V^  \ep (t)\,g(X^\ep ({{t}}))\,  dY^ {c} (t),\, \quad t>0, \quad V^  \ep (0)=1,
$$
where $Y^ {c} $ denotes the continuous part of $Y^ \ep $ and it does not depend up on $\ep$.
Since $V^ { \ep} $ is driven by the continuous part of $Y^ \ep $ and the jumps times are given, there will be no correlation terms in the formula for $V^ { \ep}(t)$.
Thus, we get
\DEQS
\lqq{
 f(X^  \ep (t))\, V^  \ep (t)=  f(X^  \ep_0)+ \int_0^ t  V^  \ep ({{s^-}})\,dM _\ep(s)+ \int_0^ t  V^  \ep ({{s^-}})\,d\tilde{M} _\ep(s)
  }
&&
\\
&+& 
  \int_0^t   \int_\RR \int_\RR V^  \ep ({{s^-}})
 \lk[ f( X^  \ep ({s ^-}) + z_1) -f( X^  \ep ({s ^-} ))\rk]  \, \nu^ \ep_{1,z_2} (dz_1)\eta_2^\ep(dz_2,ds)
\\ &+&{}
\int_0^ t V^  \ep ({{s}})\, f(X^  \ep ({{s}}))\, dY^ {c}(s)
 + \int_0^ t V^  \ep ({{s}}) \lk( \CA_0 f\rk) (X^  \ep ({{s}})) \,
 ds
 .
\EEQS
Note that $$\CY_t^\ep=\sigma\{Y_r:0\le r \le t, \ep\le|\Delta L_2(r)|<\infty\}.$$
Taking into account that $M_\ep$ is a martingale over  $(\Omega,\CF,(\CY_t^\ep)_{t\ge 0},\QQ_\ep)$, the fact that  $\EE^{\QQ_\ep}[\tilde M _\ep(t)|\CY_t^\ep]=0$ and taking the conditional expectation together with the Fubini Theorem \cite[Theorem 1.1.8]{applebaum} we get 
\DEQS
\lqq{
 \EE^ {\QQ_\ep} \lk[ f(X^  \ep (t)) V^  \ep (t)\mid \CY_t^\ep\rk] = \EE^ {\QQ_\ep} \lk[f(X^\ep_0)\mid \CY_0\rk]}
&&
\\
& &{}
 + \underbrace{\EE^ {\QQ_\ep} \lk[ \int_0^ t  V^  \ep ({{s^-}})\,dM_ \ep (s)\mid \CY_t^\ep\rk]}_{=0} +\underbrace{\EE^ {\QQ_\ep} \lk[ \int_0^ t  V^  \ep ({{s^-}})\,d\tilde{M}_ \ep (s)\mid \CY_t^\ep\rk]}_{=0}
  \\ && {}+
  \EE^ {\QQ_\ep} \lk[ \int_0^ t V^  \ep ({{s^-}})\, f(X^  \ep ({{s^-}}))\, dY^{c,\ep}(s)\mid \CY_t^\ep\rk]+\EE^ {\QQ_\ep} \lk[  \int_0^ t V^  \ep ({{s^-}})  \, \lk( \CA_0 f\rk) (X^  \ep ({{s^-}}))\, ds\mid \CY_t^\ep\rk]
  \\\lqq{ 
\hspace{-0.5cm}
{}+ \EE^ {\QQ_\ep}\lk[ \int_0^t   \int_\RR \int_\RR V^  \ep ({{s^-}})\,
  \lk[ f( X^  \ep ({s ^-}) + z_1) -f( X^  \ep ({s ^-} ))\rk]  \, \nu^ \ep_{1,z_2} (dz_1)\eta_2^\ep(dz_2,ds) \mid \CY_t^\ep\rk]
 .} &&
\del{\\
& =&
 f(X^  \ep_0) +   \int_0^ t\EE^ {\QQ_\ep} \lk[ V^  \ep ({{s^-}} ) \lk( \CA_0 f\rk) (X^  \ep ({{s^-}}))\mid \CY_t^\ep\rk]\, ds
 \\\lqq{
+  \int_\RR \EE^ {\QQ_\ep}\lk[  V^  \ep ({s_i-})\, }
\\
\lqq{ \lk[ f( X^  \ep ({s_i ^-} )+ y) -f( X^  \ep ({s_i ^-}) )- f'(X^  \ep ({s_i ^-}) )y\rk] \mid \CY_t^\ep\rk] \, \nu^ \ep_{1,Y_i^{2,\ep}} (dy)/\lambda_\ep}
 &&
\\
&&{}+ \EE^ {\QQ_\ep} \lk[ \int_0^ t V^  \ep ({{s}})\, f(X^  \ep ({{s}}))\, dY(s)^ c\mid \CY_t^\ep\rk]
}\EEQS
By imitating the calculation \eqref{newmar} and  \eqref{mart} for $\int_0^ t  V^ \ep ({{s^-}})\,d\tilde{M}_ \ep (s)=\int_0^ t  V^ \ep ({{s^-}})\,dJ^ \ep (s)-\int_0^ t  V^ \ep ({{s^-}})\,dR^ \ep (s)$, we can again  show that
$$\mathrm{E}^ {\QQ_\ep}\big[ \int_0^ t  V ^ \ep({{s^-}})\,d\tilde{M }_\ep(s)| \mathcal{Y}_t^\ep\big]=0.
$$
Next by following the same calculations done in $\eqref{zero1}$ and $\eqref{zero2}$, we can  prove that
$$\mathrm{E}^ {\QQ_\ep}\big[ \int_0^ t  V ^ \ep({{s^-}})\,dM_\ep(s)| \mathcal{Y}_t^\ep\big]=0.
$$
 Now,  since $ X^  \ep ({{s}})$ is  $\CF_{{s}}$--measurable  we have
 \DEQS
 \EE ^ {\QQ_\ep} \lk[ V^  \ep ({{s}}) \,\lk( \CA_0 f\rk) (X^  \ep ({{s}}))\mid \CY_t^\ep\rk]
 &
 {=}&\EE ^{\QQ_\ep} \lk[ V^  \ep ({{s}}) \,\lk( \CA_0 f\rk) (X^  \ep ({{s}}))\mid \CY_{{s}}^\ep\rk].
\EEQS
 Note that since $Y^ {c}(t)$ is $\CY_t^\ep$-measurable, we have similarly as in Theorem 2.2,
 \DEQS
 \EE^ {\QQ_\ep}\lk[ \int_0^ t V^  \ep ({{s^-}})g(X^  \ep ({{s^-}}))\, f(X^  \ep ({{s^-}}))\, dY_s^ {c}\mid \CY_t^\ep\rk]=
 \int_0^ t  \EE^ {\QQ_\ep} \lk[ V^  \ep ({{s^-}})g(X^  \ep ({{s^-}}))\, f(X^  \ep ({{s^-}}))\,\mid \CY_{{s^-}}^\ep\rk] dY_s^ {c}.
\EEQS
\del{
 Due to the fact that $f'(X(s^-))$ is a  $\mathcal{F}_{{s^-}}$--measurable random variable and $\Delta L_0=L_0(s)-L_0(s^-)$ is independent from $\mathcal{F}_{{s^-}}$,
again it follows  by Corollary 2.1, we get
\begin{equation}
\begin{split}
\EE^ {\QQ}\big[ \int_0^ t  V^  \ep ({{s^-}})f'(X^  \ep(s^-))\, dL_0(s)| \mathcal{Y}_t\big]=\int_0^ t \EE^ {\QQ}\big[ V^  \ep ({{s^-}})f'(X^  \ep(s^-))| \mathcal{Y}_t\big]\, dL_0(s)
\\=\int_0^ t \EE^ {\QQ}\big[ V^  \ep ({{s^-}})f'(X^  \ep(s^-))| \mathcal{Y}_s\big]\, dL_0(s)
\end{split}
\end{equation}
}
 Since $ V^\ep ({{s^-}})\int_\RR\lk[ f( X^  \ep ({s_i ^-} )+ y) -f( X^  \ep ({s_i ^-} ) )\rk]\, \nu^ \ep_{1,z_2}(dy)$ is an
 $\CF_{{s^-}}$--measurable random variable, $L_2$ is a \levy process with respect to $(\Omega,\CY,(\CY_t^\ep)_{t\ge 0},\QQ^\ep)$ , we obtain
\DEQS
\lqq{ \EE^ {\QQ_\ep}\lk[ \int_0^t   \int_\RR \int_\RR   V^  \ep ({{s^-}})\,
 \lk[ f( X^  \ep ({s ^-}) + z_1) -f( X^  \ep ({s ^-} ))\rk]  \, \nu^ \ep_{1,z_2} (dz_1)\eta_2^\ep(dz_2,ds) \mid \CY_t^\ep\rk]}
  \\ &=&
\int_0^t   \int_\RR \EE^ {\QQ_\ep}\lk[ \int_\RR  V^  \ep ({{s^-}})\,
\lk[ f( X^  \ep ({s ^-}) + z_1) -f( X^  \ep ({s ^-} ))\rk]  \, \nu^ \ep_{1,z_2} (dz_1)\mid \CY_t^\ep\rk] \eta_2^\ep(dz_2,ds)
 \\ &=&
\int_0^t   \int_\RR  \int_\RR   \EE^ {\QQ_\ep}\lk[ V^  \ep ({{s^-}})\,
 \lk[ f( X^  \ep ({s ^-}) + z_1) -f( X^  \ep ({s ^-} ))\rk] \mid \CY_{{s^-}}^\ep \rk] \, \nu^ \ep_{1,z_2} (dz_1) \,\eta_2^\ep(dz_2,ds)
.  \EEQS
Now collecting all the terms, we get
\DEQSZ \label{epzakai}\lqq{ \EE^ {\QQ_\ep} \lk[ f(X^  \ep (t)) V^  \ep (t)\mid \CY_t^\ep\rk]
  {=}
 \EE^ {\QQ_\ep} \lk[f(X^\ep_0)\mid \CY_0\rk] + \int_0^ t\EE^ {\QQ_\ep} \lk[ \,V^  \ep ({s})\,\lk( \CA_0 f\rk) (X^  \ep ({{s}}))\mid \CY_{{s}}^\ep\rk]\, ds
}&&  \\
\nonumber &&{}+ \int_0^ t  \EE^ {\QQ_\ep} \lk[ g(X^  \ep ({{s}}))\, f(X^  \ep ({{s}}))\,\mid \CY_{{s}}^\ep\rk] dY^ {c}(s)
 \\
\nonumber&&{}+\int_0^t   \int_\RR  \int_\RR   \EE^ {\QQ_\ep}  \lk[ V^  \ep ({{s^-}})\lk[ f( X^  \ep ({s ^-}) + z_1) -f( X^  \ep ({s ^-} ))\rk] \mid \CY_{{s^-}}^\ep \rk] \, \nu^ \ep_{1,z_2} (dz_1)\, \eta_2^\ep(dz_2,ds)
.
 \EEQSZ

Now we would like to pass to the limit 
 and to get the desired Zakai equation. By \cite[p.\ 235
Corollary  4.3.10 and p.\ 392, Theorem 6.5.2]{applebaum} it follows  $ X^
\ep\to X$ and $Y^ \ep\to Y$ uniformly on compact interval almost surely. Hence, the term $ \EE^ {\QQ_\ep} \lk[f(X^\ep_0)\mid \CY_0\rk]$ converges to $\EE^ {\QQ} \lk[f(X_0)\mid \CY_0\rk]$ as $\ep\to0$.
Fix $t\ge 0$. Because of the above fact, we  apply Theorem \ref{mainC} to show that for any $s\in[0,t]$, $\QQ$--a.s.\
$$\EE^ {\QQ_\ep} \lk[  V^  \ep (s)(\CA_0 f)(X^  \ep (s))\mid \CY_s^\ep\rk] \to \EE^ {\QQ} \lk[ V (s) (\CA_0 f)(X (s)) \mid \CY_s\rk], \quad \ep\to 0.
$$
The Lebesgue dominated convergence Theorem gives that $\int_0^ t\EE^ {\QQ_\ep} \lk[ \,V^  \ep ({s})\,\lk( \CA_0 f\rk) (X^  \ep ({{s}}))\mid \CY_{{s}}^\ep\rk]\, ds$
converges to $\int_0^ t\EE^ {\QQ} \lk[ \,V ({s})\,\lk( \CA_0 f\rk) (X ({{s}}))\mid \CY_{{s}}\rk]\, ds$.
Next, again applying  Theorem \ref{mainC} gives for any $s\in[0,t]$ that
$ \EE^ {\QQ_\ep} \lk[ g(X^  \ep ({{s}}))\, f(X^  \ep ({{s}}))\,\mid \CY_{{s}}^\ep\rk]$ converges to $ \EE^ {\QQ} \lk[ g(X ({{s}}))\, f(X({{s}}))\,\mid \CY_{{s}}\rk]$.
Again the Burkholder-Gundy-Davis inequality and the Lebesgue dominated convergence Theorem gives
that   $$
   \int_0^ t  \EE^ {\QQ_\ep} \lk[ g(X^  \ep ({{s}}))\, f(X^  \ep ({{s}}))\,\mid \CY_{{s}}^\ep\rk] dY^ {c}(s)
   $$
converges  to
$$\int_0^ t  \EE^ {\QQ} \lk[ g(X ({{s}}))\, f(X({{s}}))\,\mid \CY_{{s}}\rk] dY^ {c}(s)
$$
as $\ep\rightarrow 0$.
Our final goal is to prove that
$$
\int_0^t   \int_\RR  \int_\RR   \EE^ {\QQ_\ep}  \lk[ V^  \ep ({{s^-}})\lk[ f( X^  \ep ({s ^-}) + z_1) -f( X^  \ep ({s ^-} ))\rk] \mid \CY_{{s^-}}^\ep \rk] \, \nu^ \ep_{1,z_2} (dz_1)\, \eta_2^\ep(dz_2,ds)
$$
converges
 to
 $$\int_0^t   \int_\RR  \int_\RR   \EE^ {\QQ}  \lk[ V ({{s^-}})\lk[ f( X ({s ^-}) + z_1) -f( X ({s ^-} ))\rk] \mid \CY_{{s^-}} \rk] \, \nu_{1,z_2} (dz_1)\, \eta_2(dz_2,ds)$$ as $\ep\rightarrow 0$. For the notational convenient, we use
 $$\mathscr{U}^\ep_{t,z_1,z_2}=
 \EE^ {\QQ_\ep}  \lk[ V^  \ep ({{t^-}})\lk[ f( X^  \ep ({t ^-}) + z_1) -f( X^  \ep ({t ^-} ))\rk] \mid \CY_{{t^-}}^\ep \rk] 
 $$
 and
 $$\mathscr{U}_{t,z_1,z_2}=
 \EE^ {\QQ}  \lk[ V ({{t^-}})\lk[ f( X ({t ^-}) + z_1) -f( X ({t ^-} ))\rk] \mid \CY_{{t^-}} \rk]. 
$$
 Now consider
\DEQSZ\label{lasttermofzakai}
&&  \EE^ {\QQ}  \lk| \int_0^t   \int_\RR  \int_\RR \lk[\mathscr{U}^\ep_{s,z_1,z_2}  \, \nu^ \ep_{1,z_2} (dz_1)\, \eta_2^\ep(dz_2,ds)- \mathscr{U}_{s,z_1,z_2}  \, \nu_{1,z_2} (dz_1)\, \eta_2(dz_2,ds) \rk]\rk|
\\
&\le&  \EE^ {\QQ}  \lk| \int_0^t   \int_\RR  \int_\RR \lk[\mathscr{U}^\ep_{s,z_1,z_2}\mathbf{1}_{(-\ep,\ep)^c}(z_1)-  \mathscr{U}_{s,z_1,z_2}\rk]h(z_1,z_2)\nu_1 (dz_1)\, \eta_2^\ep(dz_2,ds) \rk|
\nonumber
\\
&+&  \EE^ {\QQ}  \lk| \int_0^t   \int_\RR  \int_\RR \mathscr{U}_{s,z_1,z_2}h(z_1,z_2)\nu_1 (dz_1)\, \lk[\eta_2^\ep(dz_2,ds)-  \eta_2(dz_2,ds)\rk] \rk|
\nonumber
\EEQSZ
The first term in right hand side gives
\DEQSZ\label{lasttermofzakai1}
&&  \EE^ {\QQ}  \lk| \int_0^t   \int_\RR  \int_\RR \lk[\mathscr{U}^\ep_{s,z_1,z_2}\mathbf{1}_{(-\ep,\ep)^c}(z_1)-  \mathscr{U}_{s,z_1,z_2}\rk]h(z_1,z_2)\nu_1 (dz_1)\, \eta_2^\ep(dz_2,ds) \rk|
\\
&\le&  \EE^ {\QQ}  \lk| \int_0^t   \int_\RR  \int_\RR \lk[\mathscr{U}^\ep_{s,z_1,z_2}\mathbf{1}_{(-\ep,\ep)^c}(z_1)-  \mathscr{U}_{s,z_1,z_2}\rk]h(z_1,z_2)\nu_1 (dz_1)\, \tilde{\eta}_2^\ep(dz_2,ds) \rk|
\nonumber
\\
&+&  \EE^ {\QQ}  \lk| \int_0^t   \int_\RR  \int_\RR \lk[\mathscr{U}^\ep_{s,z_1,z_2}\mathbf{1}_{(-\ep,\ep)^c}(z_1)-  \mathscr{U}_{s,z_1,z_2}\rk]h(z_1,z_2)\nu_1 (dz_1)\mathbf{1}_{(-\ep,\ep)^c}(z_2)\nu_2 (dz_2)ds \rk|,
\nonumber
\EEQSZ
where $\tilde{\eta}_2^\ep(dz_2,ds)=\eta_2^\ep(dz_2,ds)-\nu_2^\ep (dz_2)ds$ and $(-\ep,\ep)^c=\RR\setminus(-\ep,\ep)$. Applying the Burkholder-Gundy-Davis inequality, H\"older inequality and Jensen's inequality
\DEQS
\ldots  &&
 \le C(t) \EE^ {\QQ}  \lk| \int_0^t   \int_\RR  \lk[\int_\RR \lk(\mathscr{U}^\ep_{s,z_1,z_2}\mathbf{1}_{(-\ep,\ep)^c}(z_1)-  \mathscr{U}_{s,z_1,z_2}\rk)h(z_1,z_2)\nu_1 (dz_1)\rk]^2\, \mathbf{1}_{(-\ep,\ep)^c}(z_2)\nu_2 (dz_2)ds \rk|^\frac12
\\
&&+\EE^ {\QQ}  \lk| \int_0^t   \int_\RR  \int_\RR \lk[\mathscr{U}^\ep_{s,z_1,z_2}\mathbf{1}_{(-\ep,\ep)^c}(z_1)-  \mathscr{U}_{s,z_1,z_2}\rk]h(z_1,z_2)\nu_1 (dz_1)\mathbf{1}_{(-\ep,\ep)^c}(z_2)\nu_2 (dz_2)ds \rk|
\\
&& \le  C(t)\lk(  \int_0^t   \int_\RR  \EE^ {\QQ} \lk| \int_\RR \lk(\mathscr{U}^\ep_{s,z_1,z_2}\mathbf{1}_{(-\ep,\ep)^c}(z_1)-  \mathscr{U}_{s,z_1,z_2}\rk)h(z_1,z_2)\nu_1 (dz_1)\rk|^2 \, \mathbf{1}_{(-\ep,\ep)^c}(z_2)\nu_2 (dz_2)ds \rk)^\frac12
\\
&&+   \int_0^t   \int_\RR \EE^ {\QQ}\lk| \int_\RR \lk[\mathscr{U}^\ep_{s,z_1,z_2}\mathbf{1}_{(-\ep,\ep)^c}(z_1)-  \mathscr{U}_{s,z_1,z_2}\rk]h(z_1,z_2)\nu_1 (dz_1)\rk|\mathbf{1}_{(-\ep,\ep)^c}(z_2)\nu_2 (dz_2)ds  .
\EEQS
\del{Due to Assumption \ref{asslevym}, $\int_\RR \lk(\mathscr{U}^\ep_{s,z_1,z_2}\mathbf{1}_{(-\ep,\ep)^c}(z_1)-  \mathscr{U}_{s,z_1,z_2}\rk)h(z_1,z_2)\nu_1 (dz_1)$ converges to $0$.
By simple arguments together with Theorem \ref{mainC}, we can show that the two terms in above inequality, i.e.}
Due to Assumption \ref{asslevym} and using simple arguments together with Theorem \ref{mainC} and Lebesgue Dominated Convergence theorem, we can show that the two terms in above inequality, i.e.\
$$\EE^ {\QQ}\lk|\int_\RR [\mathscr{U}^\ep_{s,z_1,z_2}\mathbf{1}_{(-\ep,\ep)^c}(z_1)-  \mathscr{U}_{s,z_1,z_2}]h(z_1,z_2)\nu_1 (dz_1)\rk|^2
$$
and
$$\EE^ {\QQ}\lk|\int_\RR [\mathscr{U}^\ep_{s,z_1,z_2}\mathbf{1}_{(-\ep,\ep)^c}(z_1)-  \mathscr{U}_{s,z_1,z_2}] h(z_1,z_2)\nu_1 (dz_1)\rk|
$$
 converge  to zero as $\ep\rightarrow 0$. Then by the Lebesgue Dominated Convergence theorem, the two terms in right hand side of above inequality converge to zero as  $\ep\rightarrow 0$. Let us consider the second term in the right hand side of \eqref{lasttermofzakai},
\DEQSZ\label{lasttermofzakai2}
&&  \EE^ {\QQ}  \lk| \int_0^t   \int_\RR  \int_\RR \mathscr{U}_{s,z_1,z_2}h(z_1,z_2)\nu_1 (dz_1)\, \lk[\eta_2^\ep(dz_2,ds)-  \eta_2(dz_2,ds)\rk] \rk|
\\
&\le&  \EE^ {\QQ}  \lk| \int_0^t   \int_\RR  \int_\RR \mathscr{U}_{s,z_1,z_2}h(z_1,z_2)\nu_1 (dz_1)\, \lk[\tilde{\eta}_2^\ep(dz_2,ds)-  \tilde{\eta}_2(dz_2,ds)\rk] \rk|
\nonumber
\\
&+&  \EE^ {\QQ}  \lk| \int_0^t   \int_\RR  \int_\RR \mathscr{U}_{s,z_1,z_2}h(z_1,z_2)\nu_1 (dz_1)\,\mathbf{1}_{(-\ep,\ep)}(z_2)\nu_2 (dz_2)ds \rk|.
\nonumber
\EEQSZ
The Burkholder-Gundy-Davis inequality and Jensen's inequality imply
\DEQS
\ldots  &&  \le  \EE^ {\QQ}  \lk| \int_0^t   \int_\RR  \lk|\int_\RR \mathscr{U}_{s,z_1,z_2}h(z_1,z_2)\nu_1 (dz_1)\rk|^2\, \mathbf{1}_{(-\ep,\ep)}(z_2)\nu_2 (dz_2)ds  \rk|^\frac12
\\
&+&  \EE^ {\QQ}  \lk| \int_0^t   \int_\RR  \int_\RR \mathscr{U}_{s,z_1,z_2}h(z_1,z_2)\nu_1 (dz_1)\,\mathbf{1}_{(-\ep,\ep)}(z_2)\nu_2 (dz_2)ds \rk|
\\
&&  \le    \lk[ \int_0^t   \EE^ {\QQ}\lk|  \int_\RR \int_\RR \mathscr{U}_{s,z_1,z_2}\nu_{1,z_2} (dz_1)\rk|^2\, \mathbf{1}_{(-\ep,\ep)}(z_2)\nu_2 (dz_2)ds  \rk]^\frac12
\\
&+&     \int_0^t   \int_\RR \EE^ {\QQ}\lk| \int_\RR \mathscr{U}_{s,z_1,z_2}\nu_{1,z_2} (dz_1)\rk|\,\mathbf{1}_{(-\ep,\ep)}(z_2)\nu_2 (dz_2)ds .
\EEQS
Again, arguing as before and using assumption \ref{asslevym}, we see that the two terms in right hand side of the above inequality go to zero as $\ep\rightarrow 0$.

\medskip
Summarizing, we have shown that for any $t\ge 0$, $ \EE^ {\QQ_\ep} \lk[ f(X^  \ep (t)) V^  \ep (t)\mid \CY_t^\ep\rk]$ converges to
$ \EE^ {\QQ} \lk[ f(X (t)) V (t)\mid \CY_t\rk]$ $\QQ$--a.s.. It is straightforward to see that the family of processes $[0,T]\ni t \mapsto  \EE^ {\QQ_\ep} \lk[ f(X^  \ep (t)) V^  \ep (t)\mid \CY_t^\ep\rk]$ is tight. Hence, we know by Theorem 7.8 by \cite{ethier}, that the process $ \EE^ {\QQ_\ep} \lk[ f(X^  \ep (\cdot)) V^  \ep (\cdot )\mid \CY_\cdot^\ep\rk]$
converges to the process $ \EE^ {\QQ} \lk[ f(X (\cdot)) V (\cdot)\mid \CY_\cdot\rk]$ in $\DD([0,T];\RR)$.

\del{ Then we can see that first term in the last inequality goes to zero as $\ep\rightarrow 0$ since $ X^
\ep\to X$ and $Y^ \ep\to Y$ uniformly on compact interval almost surely \cite[p.\ 235
Corollary  4.3.10 and p.\ 392, Theorem 6.5.2]{applebaum} and by using Lebesgue dominated convergence theorem. The second term in the last inequality approaches to zero as $\ep\rightarrow 0$
Using the definition of $\rho^ \ep _t$ 
we get
\DEQSZ\nonumber\label{oben}
\la \rho^ \ep _t,f\ra  &=& \la \rho^ \ep_0,f\ra +  \int_0^ t  \la \rho^ \ep _{{s^-}}, f.g\ra \,dY^ {c,\ep}(s)
\\ && {}+ \int_0^t \la \rho ^ \ep_{{s^-}}, \CA_0   f\ra\, ds
 + \int_0^ t\int_\RR  \la \rho ^ \ep_{{s^-}}, \OPER^ \ep_{z_2 } f\, \ra\, \eta^ \ep_{2}(dz_2,ds)
,
\EEQSZ
where
\DEQS
 \OPER  ^ \ep_{z_2} f(x)   =  \int_\RR
\lk[f(x+z_1)-f(x)\rk] \nu^ \ep  _{1,z_2}(dz_1),\quad f\in C ^{(2)}_b(\RR),\quad z_2\in\RR\setminus\{0\}.
\EEQS
\del{and $\nu^ \ep_2$ is a time homogenous Poisson random measure with \levy measure $\nu_2^\ep$ defined by
$\nu_2(\cdot\cap (\RR\setminus (-\ep,\ep)))$.
Hence,
\DEQSZ\nonumber\label{oben1}
\la \rho^ \ep_t,f\ra  &=& \la \rho^ \ep_0,f\ra +  \int_0^ t  \la \rho^ \ep_{{s^-}}, f\ra \,dY^ {\ep,c}(s)
\\ && {}+ \int_0^t \la \rho^ \ep_{{s^-}}, \CA_0   f\ra\, ds
 +\int_\RR  \la \rho^ \ep_{{s^-}}, \CBB ^ \ep_{z_2 } f\, \ra\, \eta_{2}^\ep(dz_2,dt)
.
\EEQSZ}
\del{where
$$\CBB ^ \ep_{z_2} f(x)=   \int_\RR
\lk[f(x+z_1)-f(x)\rk] \nu^ \ep _{1,z_2}(dz_1),\quad f\in C ^{(2)}_b(\RR).
$$}
Next, we have to take the limit $\ep\to0$. By \cite[p.\ 235
Corollary  4.3.10 and p.\ 392, Theorem 6.5.2]{applebaum} it follows  $ X^
\ep\to X$ uniformly on compact interval almost surely. Similarly
$Y^ \ep\to Y$ uniformly a.s. on compact intervals. It remains to
show that
$$
\OPER _z^ \ep f \to \OPER _z f
$$
uniformly a.s. on compact intervals. By \cite[p.\ 41, Theorem,
8.7]{sato} it remains to show that
$$
\lim_{\ep\to 0}  \int_{[0,\ep]} |\la z_2,x\ra |\nu_{1,z_2}(dx) \rightarrow 0.
$$
{By definition,
\begin{equation*}
\begin{split}
  \int_{[0,\ep]} \la z_2,x\ra\nu_{1,z_2}(dx)=\int_{[0,\ep]} |\la z_2,x\ra| h(x,z_2) \nu_{1} (dx).
   \end{split}
\end{equation*}
The above definition makes sense because $L_1$ is a finite variation \levy process, the second  derivative of $H$ is bounded for $x\neq \infty$
 and for each
$z_2$ the function $|\la z_2,x\ra |h(x,z_2)$ is in $L^1(\mathbb{R}, \nu_1)$. Therefore, the RHS of the above identity converges to $0$ when $\eps \to 0$ for any $z_2$.}
}
\del{Thanks to teh assumption on $f$ and $g$ it is easy to show that
\eqref{variational1} has a unique solution.}
\end{proof}

\section{Sufficient conditions for solvability of the Zakai equation}

In practice one is often interested in entities like
$$\PP( X(t)\ge a),\quad a\in\RR,
$$
 where $a$ is a given threshold. This correspond to the
case where $f=1_{[a,\infty)}$. Unfortunately, in this case
$f\notin C ^{(2)}(\RR)$ and we cannot expect that equation
\eqref{variational1} is well-posed. One method to handle this
problem is to treat equation \eqref{variational1} by the semigroup
approach. Let us denote the infinitesimal generator of the process
$L_0$ with the drift ( that is $ \int_0^.b(X(s))\, ds$) by $\CA_0$. If $\CA_0$ generates an analytic semigroup with  good smoothing property, then
one can show the existence of a measure valued solution to
\eqref{variational1} even for the case where $f=1_{[a,\infty)}$.
If the driving process $L_0$ of the state process $X$ is a Brownian motion,
then the operator $\CA_0$ in the Zakai equation \eqref{variational1} is the Laplace operator with first order operator. However, if $L_0$ is a \levy process of pure
 jump  type\footnote{We say that a \levy process is of pure jump type if it has no Gaussian part.}, then $\CA_0$ will be a pseudo differential operator.

There exists several approaches to deal with pseudo--operators arising from \levy processes. One way is
to define the operator $\CA_0$ associated with the symbol $\phi_{\CA_0}\footnote{If $\CA_0$ is the Laplacian, then $\phi_{\CA_0}(\xi)=\xi^ 2 $.}$ is given by
$$\phi_{\CA_0}(\xi) :=ib(x)\xi+ \int_\RR\lk( e^{i\xi z}-1\rk) \nu_0(dz),\quad \xi\in\RR.
$$
Here,  $L_0$ is a \levy process of pure jump type with intensity $\nu_0$.
For a short account on the associated symbol to a \levy process we refer to \cite{hsym}. More details can be found in the article of Hoh \cite{hoh1}, and in the books of Jacobs
\cite{Jacob-I,Jacob-II,Jacob-III}.

It can be shown that $\CA_0$ with domain $D(\CA_0)$ generates a
strongly continuous semigroup $T_{\CA_0}=(T_{\CA_0}(t) )_{t\ge 0}$
on $L^2(\RR^d)$. This semigroup can be extended (or restricted) to
a semigroup acting on $H^s_2(\RR^d )$, $s\in\RR$. By analyzing the
symbol $\phi_{\CA_0}$, one gets information about the smoothing
properties of the semigroup $T_{\CA_0}=(T_{\CA_0}(t) )_{t\ge 0}$.

\begin{defn}[compare \cite{hsym}]
Let $L$ be a \levy process with symbol $\psi$ and  $\psi\in C^ k(\RR^d\setminus\{0\})$ for some  $k\in\NN_0$. Then
the Blumenthal--Getoor index of order $k$ is defined by
$$
\beta := \inf_{\lambda>0\atop |\alpha|\le k} \lk\{\lambda: \lim_{|\xi|\to\infty} {|\partial^ \alpha_\xi \psi(\xi)|\over |\xi|^{\lambda-|\alpha|} }=0\rk\}.
$$
Let
$$
\beta^+ := \inf_{\lambda>0\atop |\alpha|\le k} \lk\{\lambda: \limsup_{|\xi|\to\infty} {|\partial^ \alpha_\xi \psi(\xi)|\over |\xi|^{\lambda-|\alpha|} }=0\rk\},
$$
be the upper and
$$
\beta^- := \inf_{\lambda>0\atop |\alpha|\le k} \lk\{\lambda: \liminf_{|\xi|\to\infty} {|\partial^ \alpha_\xi  \psi(\xi)|\over |\xi|^{\lambda-|\alpha|} }=0\rk\},
$$
be the lower Blumenthal--Getoor index $\beta^-$ of order $k$.
Here $\alpha$ denotes a  multi-index. If $k=\infty$ then Blumenthal--Getoor index of infinity order is defined by
$$
\beta := \inf_{\lambda>0\atop \alpha \,\,\mbox{\tiny \rm is a muliindex} } \lk\{\lambda: \lim_{|\xi|\to\infty} {|\partial^ \alpha_\xi \psi(\xi)|\over |\xi|^{\lambda-|\alpha|} }=0\rk\}.
$$

\end{defn}

In many cases the  index  can be calculated directly from the symbol and is known. A sequence of examples of the generalized Blumenthal--Getoor index, like the symmetric $\alpha$--stable process,
 tempered $\alpha$--stable process, Meixner process and normal inverse Gaussian process are given in \cite{hsym}.

Depending on the lower index of $L_0$ and the marginal \levy measures $\nu_1$ and $\nu_2$ of the \levy process $L$,
one can prove that there exists a unique measure valued process $\pi=\{\pi_t:t\ge 0\}$ such that $$
\pi_t (f) = \EE \lk[ f(X(t))\mid \CY_t\rk],\quad f\in B_b(\RR).
$$

\begin{thm}\label{measurevalued}
Let us assume that
\begin{itemize}
\item $X_0$ has distribution function $F$, which has a $L^2 $--integrable density with respect to the Lebesgue measure;
  \item the symbol $\psi_0$ associated to $L_0$ has lower Blumenthal--Getoor   index $\alpha_0^->1$ of order two,
\item $g\in H^\delta_2(\RR)\cap C^{(2)}_b(\RR)$ with $\delta>1-\frac {\alpha_0^-}2$; 
  \item the symbol  $\phi_{\OPER _z}$ associated to the operator $\OPER _z$, has upper   Blumenthal--Getoor  index $\beta ^ +\le 1$ of order two,
  \item there exists some function $k:\RR^+_0\to\RR_0^+$, $k(0)=0$, continuous at $0$, such that
\DEQSZ\label{app1}
 \limsup_{|\xi|\to\infty} {|\phi_{\OPER _{z_2}}(\xi)|\over |\xi|^{\beta^ +} }\le  k(z_2) , \quad z_2\in\RR.
\EEQSZ
\item For simplicity, we take $L_1$ and $L_2$ with positive jumps such that $$\int_{|z_1|\le 1}|z_1|\nu_1(dz_1)+\int_{|z_2|\le 1}|z_2|\nu_2(dz_2)<\infty.$$
\end{itemize}
 In addition, if there exists a number $p\in (1,2]$ such that
\DEQSZ\label{app2} {\beta^ +\over \alpha^ -_0}<\frac 1p
 \quad \mbox{ and } \quad
\int_{|z_2|\le 1} \lk|k(z_2)\rk|^ { p}\nu_2(dz_2)<\infty,
\EEQSZ
then there exists a unique normalized conditional density
$\pi=\{ \pi_t:t\ge 0\}$ such that
$$
\pi_t (f) = \EE \lk[ f(X(t))\mid \CY_t\rk],\quad f\in B_b(\RR).
$$
Moreover for $f\in B_b(\RR)$,  $\pi_t(f)$ is given by
$$
\pi_t(f)  = \sigma(t)\cdot \rho_t(f)
$$
where $\sigma=\{\sigma(t):t\ge 0\}$ solves
\DEQS
\sigma(t)=1+\int_0^t \rho_{{s^-}}(g)\, dY^c_s,\quad t\ge 0,
\EEQS
and $\rho=\{\rho_t:t\ge 0\}$
is the unique solution of the following equation
\DEQSZ\label{eqn_mild1}
\lk\{ \barray d\rho_t &=&  \CA^\ast_0 \rho_t \, dt + \rho_t g  dY^ c_t+\int_\RR  \OPER^\ast _{z_2} \rho_{t^-}\eta_2(dz_2,dt),\\
\rho_0&=& \pi_0,
\earray \rk.
\EEQSZ
where
$\CA^\ast_0$ and $\OPER^\ast _{z_2}$ are adjoint operators of $\CA_0$ and $\OPER _{z_2}$.
\end{thm}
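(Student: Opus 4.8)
The plan is to solve equation \eqref{eqn_mild1} in the mild sense, in the scale of Bessel potential spaces, by a Banach fixed point argument, and then to transfer the result back to the filter via the Kallianpur--Striebel formula and the computation of Remark \ref{xiinverse}. The first step is to read off from the hypotheses the operator-theoretic ingredients. Because $\psi_0$ has lower Blumenthal--Getoor index $\alpha_0^->1$ of order two, $\CA_0$ and its adjoint $\CA_0^\ast$ generate an analytic semigroup $(T(t))_{t\ge0}$ which acts on every $H^s_2(\RR)$ and satisfies the smoothing bound
\[
\|T(t)\|_{\mathcal{L}(H^{s}_2,\,H^{s+\theta}_2)}\le C\,t^{-\theta/\alpha_0^-},\qquad \theta\ge0,\ \ 0<t\le1,
\]
see \cite{hsym}. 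Because $\phi_{\OPER_{z_2}}$ has upper index $\beta^+\le1$ of order two and $\eqref{app1}$ holds, $\OPER^\ast_{z_2}$ has order at most $\beta^+$, with $\|\OPER^\ast_{z_2}\|_{\mathcal{L}(H^{s}_2,\,H^{s-\beta^+}_2)}\le C\,(1+k(z_2))$. Finally, multiplication by $g$ is bounded on the relevant Bessel spaces because $g\in H^\delta_2(\RR)\cap C^{(2)}_b(\RR)$, and the requirement $\delta>1-\alpha_0^-/2$ is exactly what makes the smoothing exponent produced by the $g$-term integrable in time after the Brownian square function.

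Next I would fix $T>0$ and the exponent $p\in(1,2]$ of $\eqref{app2}$, note that the $L^2$-density hypothesis gives $\pi_0\in L^1(\RR)\cap L^2(\RR)$, and look for $\rho$ in the Banach space $\mathcal{Z}_T$ of $(\CY_t)$-predictable processes with values in a Bessel potential space $H^\sigma_2(\RR)$ ($\sigma$ chosen so that the equation is well posed, the $L^2$ initial datum being injected instantly into $H^\sigma_2$ by the analytic smoothing), normed by $\big(\sup_{t\le T}\EE^\QQ|\rho_t|_{H^\sigma_2}^p\big)^{1/p}$. The map to be iterated is
\[
(\Gamma\rho)_t=T(t)\pi_0+\int_0^tT(t-r)\big(g\,\rho_r\big)\,dY^c_r+\int_0^t\!\!\int_\RR T(t-r)\,\OPER^\ast_{z_2}\rho_{r^-}\,\eta_2(dz_2,dr).
\]
The Brownian integral is controlled by the $p$-th moment Burkholder--Davis--Gundy inequality together with the two estimates above; for the Poisson integral one writes $\eta_2=\tilde\eta_2+\nu_2(dz_2)\,dr$, applies Kunita's $p$-th moment inequality to the compensated part and Minkowski/H\"older to the compensator, and reaches an estimate of the form
\[
\EE^\QQ\Big|\int_0^t\!\!\int_\RR T(t-r)\,\OPER^\ast_{z_2}\rho_{r^-}\,\eta_2(dz_2,dr)\Big|_{H^\sigma_2}^p
\]
\[
\le\; C\,\Big(\int_0^t(t-r)^{-p\beta^+/\alpha_0^-}\,dr\Big)\Big(\int_\RR(1+k(z_2))^p\,\nu_2(dz_2)\Big)\,\sup_{r\le t}\EE^\QQ|\rho_r|_{H^\sigma_2}^p .
\]
The time integral is finite because $\beta^+/\alpha_0^-<1/p$ by $\eqref{app2}$; the $z_2$-integral is finite by $\eqref{app2}$ and the finiteness of $\nu_2$ away from the origin, the behaviour near $0$ being controlled by $k(0)=0$, the continuity of $k$, and $\int_{|z_2|\le1}|z_2|\,\nu_2(dz_2)<\infty$. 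Hence $\Gamma$ maps $\mathcal{Z}_T$ into itself, and on a short interval $[0,T_0]$ the same bounds (which pick up a small factor $T_0^{1-p\beta^+/\alpha_0^-}$, and a factor tending to $0$ with $T_0$ from the $g$-term) make $\Gamma$ a strict contraction; patching intervals yields a unique global mild solution $\rho$, and a factorization argument gives it a \cadlag $H^\sigma_2(\RR)$-valued modification.

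It remains to identify $\rho$ with the filter. Since $T(\cdot)$ and each $\OPER^\ast_{z_2}$ preserve positivity of densities, $\rho_t\ge0$; testing \eqref{eqn_mild1} against $f\equiv1$ as in Remark \ref{xiinverse} gives $\rho_t(1)=\xi(t)=\EE^\QQ[V(t)\mid\CY_t]$, so $\|\rho_t\|_{L^1(\RR)}=\xi(t)<\infty$ and $\rho_t(f)=\int_\RR\rho_t(x)\,f(x)\,dx$ is well defined for every $f\in B_b(\RR)$; moreover $\rho_t(g)$ is bounded by $|g|_{C_b}$ (again Remark \ref{xiinverse}), so by the Novikov condition $\sigma=\xi^{-1}$ is a well-defined strictly positive process solving $\sigma(t)=1+\int_0^t\rho_{s^-}(g)\,dY^c_s$. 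For $f\in C^{(2)}_b(\RR)$ the mild solution coincides, by the uniqueness just proved and the standard equivalence of weak and mild solutions, with the weak solution produced in Theorem \ref{copula_inf}, i.e.\ with the density of $f\mapsto\EE^\QQ[V(t)f(X(t))\mid\CY_t]$; approximating an arbitrary $f\in B_b(\RR)$, in particular $f=1_{[a,\infty)}$, by a uniformly bounded sequence in $C^{(2)}_b(\RR)$ converging pointwise and applying dominated convergence on both sides (in $\QQ$-probability on the right, using $\rho_t\in L^1(\RR)$ on the left) extends $\rho_t(f)=\EE^\QQ[V(t)f(X(t))\mid\CY_t]$ to all $f\in B_b(\RR)$. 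The Kallianpur--Striebel formula then gives $\pi_t(f):=\sigma(t)\,\rho_t(f)=\EE[f(X(t))\mid\CY_t]$, and uniqueness of $\pi$ follows from uniqueness of $\rho$ in $\mathcal{Z}_T$.

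The hard part is the contraction step: arranging the stochastic-convolution estimates in exactly the right $H^\sigma_2(\RR)$-valued, $L^p(\Omega)$ framework, so that the parabolic smoothing exponent $p\beta^+/\alpha_0^-$ borrowed to absorb the order of $\OPER^\ast_{z_2}$ remains strictly below $1$ while, at the same time, the symbol bound $k$ is $p$-integrable against $\nu_2$ near the origin. This is precisely where all the quantitative hypotheses — on $\alpha_0^-$, on $\beta^+$, on the regularity of $g$, on $k$, and on the small-jump variation of $L$ — are used, and where one must be careful that the compensator term of the Poisson integral and the Kunita-inequality constants do not destroy the contraction.
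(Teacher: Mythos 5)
Your first half --- the construction of $\rho$ as the unique mild solution of \eqref{eqn_mild1} by a Banach fixed point argument in a Bessel potential space, using the analytic smoothing $\|T(t)\|_{\mathcal{L}(H^s_2,H^{s+\theta}_2)}\lesssim t^{-\theta/\alpha_0^-}$ coming from the lower index of $\psi_0$, the order bound $\|\OPER^\ast_{z_2}\|_{\mathcal{L}(H^s_2,H^{s-\beta^+}_2)}\lesssim 1+k(z_2)$ coming from \eqref{app1}, and the Runst--Sickel product estimate for the $g$-term --- is exactly the route the paper takes: the paper packages this as Theorem \ref{main} in Appendix \ref{aA} (with $\vr=\tfrac12$, $A=\CA_0^\ast$, $G=\OPER^\ast$, $\Sigma(u)=ug$) and verifies its hypotheses from \eqref{app1} and \eqref{app2}, working in the weighted norm $\EE\int_0^Te^{-\lambda t}|u(t)|^p_{H^\vr_2}\,dt$ with $\lambda$ large instead of your short-interval-plus-patching contraction; these two devices are interchangeable.

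The gap is in the final step, the extension of $\pi_t(f)=\sigma(t)\rho_t(f)$ from $f\in C^{(2)}_b(\RR)$ to all $f\in B_b(\RR)$. You propose to approximate an arbitrary bounded Borel $f$ by a uniformly bounded sequence in $C^{(2)}_b(\RR)$ converging pointwise; such a sequence does not exist for a general Borel $f$ (pointwise limits of continuous functions exhaust only the first Baire class), so dominated convergence cannot be applied as stated and one needs a monotone class or kernel-construction argument. This is precisely what the paper supplies: it checks that $Tf=\EE[f(X(t))\mid\CY_t]$ is a.s.\ linear and positive and that $Tf_n\uparrow Tf$ for $0\le f_n\uparrow f$ (using that convergence in $L^\infty$ implies convergence in $H^{-1/2}_2(\RR)$, paired against $\rho_t\in H^{1/2}_2(\RR)$ and $\rho_0\in L^2(\RR)$), and then invokes Getoor's lemma to produce the kernel $\mu_t(\omega,du)$. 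Relatedly, your claim that $\rho_t\ge0$ because ``$T(\cdot)$ and $\OPER^\ast_{z_2}$ preserve positivity'' does not follow from the mild formulation alone --- the multiplicative Brownian term $\rho\,g\,dY^c$ has no sign --- and in the paper positivity is never extracted from the equation; it enters only through the representation $\rho_t(f)=\EE^\QQ[V(t)f(X(t))\mid\CY_t]$, which is exactly the positivity of $T$ that Getoor's lemma requires. With Getoor's lemma (or an explicit functional monotone class argument) substituted for your pointwise-approximation step, the proof closes and agrees with the paper's.
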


\begin{rm}
The adjoint operators $\CA^\ast_0$ and $\OPER^\ast _{z_2}$
are  defined as follows. Let $\sigma_{\CA_0}(x,\xi)$ be the symbol of the operator $\CA_0$. Therefore by using \cite[p.\ 26,
the adjoint operator representation (3.37)]{shubin}, the symbol of the operator $\CA^\ast_0$ can be read as
$$\sigma_{\CA^\ast_0}(x,\xi)\sim\sum_{|\alpha|\leq 1}\frac{\partial^{\alpha}_\xi D^{\alpha}_x  \overline{\sigma_{\CA_0}(x,\xi)}}{\alpha!}.$$ Similarly, the symbol $\phi^\ast_{z}(\xi)$ of the adjoint operator $\OPER^\ast _{z_2}$ is given by $$\phi^\ast_{z}(\xi)\sim\sum_{|\alpha|\leq s}\frac{\partial^{\alpha}_\xi   \overline{\phi_{z}(\xi)}}{\alpha!}.$$
\end{rm}
\begin{proof}
We apply Theorem \ref{main} to get an $H^\frac 12_2 (\RR)$--valued solution, and then we show the existence of normalized conditional density by using the Getoor's lemma \cite[Proposition 4.1]{getoor} or \cite[Lemma 3.9]{Fer13}.

In fact if we take $\vr=\frac 12$, then one can easily see that
\eqref{app1} and \eqref{app2} imply that $A:=\CA^\ast$ and $G:=\OPER^\ast$
satisfy the assumptions of Theorem \ref{main}.
By \cite[Theorem
1 , p.\ 190]{runst} we have
$$
\lk|   u\, g \, \rk| _{H_2^ {\vr +\delta-\frac 12 } }\le \lk| u\rk| _{H_2^ {\vr } }\, \lk| g\rk| _{H_2^ {\delta} },\quad  u\in H_2^\vr(\RR) \quad \mbox{and} \quad g\in H_2^\delta(\RR).
$$
Therefore, by setting $\Sigma (u)=u\cdot g$ we also see that
$\Sigma$ satisfies the assumptions of Theorem \ref{main} as well.
Hence from these observations we see that if the assumptions of
Theorem \ref{measurevalued} hold, then it follows from Theorem
\ref{main} that there exists a $H^\frac 12_2 (\RR)$--valued
process $\rho$, such that for any $t\ge0$
$$
\rho_t(f) =\EE ^\QQ\lk[ V(t)\, f(X(t))\mid \CY_t\rk],\quad f\in\CBB (\RR).
$$
Secondly, let us fix $t>0$ and set $\CG=\CY_t$ as the
$\sigma$--field on $\Omega$ and define the operator $T$ by
$$
T f(\omega) = \EE \lk[ f(X(t))\mid \CY_t\rk](\omega).
$$
It is easy to check that $T$ is a.s.\ linear and positive.
Let $\{f_n:n\in\NN\}\subset B_b(\RR)$ be a sequence with $0\le f_n\uparrow f$. But if  $f_n\rightarrow f$ in $L^\infty(\RR)$, then one knows by Sobolev embedding theorem that $f_n\rightarrow f$ in $H^{-\frac 12}_2(\RR)$.
Since for $t>0$ $\rho_t$ is $H^\frac 12_2(\RR)$--valued un-normalized density measure,  $\rho_t(f_n)\rightarrow \rho_t(f)$. Here, one has  to take into account that
the density of $X_0$  belongs $\rho_0\in L^2 (\RR)$.
In addition, since $\rho_t(1)$ is well defined and invertible (see Remark \ref{xiinverse}), we have
$$
\pi_t(f_n) = {\rho_t(f_n)\over \rho_t(1)} \rightarrow  {\rho_t(f)\over \rho_t(1)}=\pi_t(f).
$$
That is $\pi_t(f_n)\to \pi_t(f)$. Since for $f_n\uparrow
f$, $f-f_n$ is a.s.\  positive, it follows that $T(f-f_n)$ is also
a.s.\ positive and, therefore, $Tf_n\uparrow Tf$.

Now, thanks to
 these two points we can infer from  \cite[Proposition 4.1]{getoor} or \cite[Lemma 3.9]{Fer13} that  there exists a kernel
$$\mu_t:(\Omega,\CY_t)\to  (\RR,\CBB (\RR)),$$
such that
$$\EE\lk[ f(X(t)) \mid\CY_t\rk](\omega) = \int_\RR f(u)\mu_t(\omega,du),\quad f\in B_b(\RR).
$$
\end{proof}

In the following corollary we present an example to illustrate the applicability of  Theorem \ref{measurevalued}.

\begin{cor}\label{spec_ex}
Let $L_0$ be a tempered $\alpha$--stable \levy process with $\alpha>1$ (see example 3.3)
with \levy measure
$$
\nu(U) =\int_U |z|^{-\alpha-1} e^ {-|z|} \, dz,\quad U\in\CBB (\RR),
$$
and $\nu_1$, $\nu_2$ are tempered   $\beta$--stable subordinators, $\beta\le 1$, 
with \levy measure
$$
\nu(U) =\int_{U } |z|^{-\beta-1}  e^ {-|z|} dz,\quad U\in\CBB (\RR).
$$
Let  $g\in H^\delta_2(\RR)\cap C^{(2)}_b(\RR)\subseteq C^{(2)}_b(\RR)$ with $\delta>1-\frac {\alpha_0^-}2$.
Let $H$ be the Clayton copula with index $\theta>0$. 
If  the distribution of $X_0$  has a $L^2 $ integrable density with respect to the Lebesgue measure,
then there exists a
unique family of probabilities kernels
$\pi=\{ \pi_t:t\ge 0\}$ such
$$
\pi_t (f) = \EE \lk[ f(X(t))\mid \CY_t\rk],\quad f\in B_b(\RR).
$$
Moreover for $f\in B(\RR)$ the kernel $\pi_t(f)$ is given by
$$
\pi_t(f)  = \sigma(t)\cdot \rho_t(f),
$$
where $\sigma=\{\sigma(t):t\ge 0\}$ solves
\DEQS
\sigma(t)=1+\int_0^t \rho_{{s^-}}(g)\, dY^c_s,\quad t\ge 0,
\EEQS
and $\rho=\{\rho_t:t\ge 0\}$
solves
\DEQSZ\label{eqn_mild11}
\lk\{ \barray d\rho_t &=&  \CA^\ast_0 \rho_t \, dt + \rho_t h  dY^ c_t +\int_\RR  \OPER^\ast _{z_2} \rho_{t^-}\eta_2(dz_2,dt),\\
\rho_0&=& \pi_0,
\earray \rk.
\EEQSZ
where $$\OPER _{z} f(x)=\int_\RR
\lk[f(x+z_1)-f(x) \rk] \nu_{1,z}(dz_1),\quad z\in\RR_+, x\in
\mathbb{R}, f\in C ^{(2)}_b(\RR).
$$
\end{cor}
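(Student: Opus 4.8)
The plan is to derive Corollary~\ref{spec_ex} directly from Theorem~\ref{measurevalued}, by checking one by one that the concrete data of the corollary satisfy every hypothesis of that theorem. Several hypotheses are immediate: the assumption that $X_0$ has an $L^2$ density and the assumption $g\in H^\delta_2(\RR)\cap C^{(2)}_b(\RR)$ with $\delta>1-\tfrac{\alpha_0^-}{2}$ are part of the data, and since $\nu_1,\nu_2$ are finite--variation subordinators with density $|z|^{-\beta-1}e^{-|z|}$ they are supported on $\RR_+$ and satisfy $\int_{|z|\le1}|z|\,\nu_i(dz)<\infty$, which is both the last bullet of Theorem~\ref{measurevalued} and the standing assumption \eqref{asslevym}.

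First I would identify the lower Blumenthal--Getoor index of $L_0$. The tempering factor $e^{-|z|}$ alters the L\'evy measure only away from the origin and hence affects only the low--frequency part of the symbol; at high frequency $\psi_0(\xi)\sim c|\xi|^{\alpha}$, with the same power scaling inherited by the derivatives up to order two. Consequently (as in the list of examples in \cite{hsym}) the lower index of order two is $\alpha_0^-=\alpha>1$, which verifies the second bullet and, incidentally, fixes the threshold $\delta>1-\tfrac{\alpha}{2}$ already imposed on $g$.

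The substantive step is the analysis of the operators $\OPER_z$, i.e.\ of the conditional measures $\nu_{1,z}(dz_1)=h(z_1,z)\,\nu_1(dz_1)$ with $h(z_1,z_2)=\partial_{u_1}\partial_{u_2}H(u_1,u_2)\big|_{u_1=U_1(z_1),\,u_2=U_2(z_2)}$. For the Clayton copula $H(u_1,u_2)=(u_1^{-\theta}+u_2^{-\theta})^{-1/\theta}$ one computes $h=(1+\theta)(u_1^{-\theta}+u_2^{-\theta})^{-1/\theta-2}(u_1u_2)^{-\theta-1}$, whence the elementary bound $h\le(1+\theta)\,(\min(u_1,u_2))^{\theta}\,(\max(u_1,u_2))^{-\theta-1}$. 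Using $U_i(z_i)\sim\beta^{-1}z_i^{-\beta}$ as $z_i\downarrow0$ together with the exponential decay of $\nu_1$ at infinity, one shows that for every fixed $z$ the map $z_1\mapsto h(z_1,z)$ is bounded on $\RR_+$ and that $h(z_1,z)\nu_1(dz_1)$ is in fact a finite measure (near $0$ the integrand behaves like $z_1^{\beta\theta-1}$, integrable because $\theta>0$). Hence $\OPER_z$ is a bounded operator on each $H^s_2(\RR)$, its symbol $\phi_{\OPER_z}(\xi)=\int_{\RR_+}\!\big(e^{iz_1\xi}-1\big)\,\nu_{1,z}(dz_1)$ obeys $|\phi_{\OPER_z}(\xi)|\le2$, and so one may take the upper index of order two to be $\beta^+=1$ and choose $k\equiv0$ in \eqref{app1}: the left--hand side of \eqref{app1} is then $\limsup_{|\xi|\to\infty}|\phi_{\OPER_{z_2}}(\xi)|/|\xi|=0=k(z_2)$, and $k$ is trivially continuous with $k(0)=0$. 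This settles the fourth and fifth bullets.

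Finally, with $\beta^+=1$, $\alpha_0^-=\alpha$ and $k\equiv0$, condition \eqref{app2} reduces to $\tfrac1\alpha<\tfrac1p$ together with $\int_{|z_2|\le1}|k(z_2)|^p\,\nu_2(dz_2)=0<\infty$; any exponent $p\in\big(1,\min\{2,\alpha\}\big)$ does the job, the interval being nonempty exactly because $\alpha>1$. All hypotheses of Theorem~\ref{measurevalued} now hold, so its conclusion applies verbatim and yields precisely the assertion of the corollary: a unique family of probability kernels $\pi=\{\pi_t:t\ge0\}$ with $\pi_t(f)=\EE[f(X(t))\mid\CY_t]$ for $f\in B_b(\RR)$, given by $\pi_t(f)=\sigma(t)\,\rho_t(f)$ with $\sigma(t)=1+\int_0^t\rho_{s^-}(g)\,dY^c_s$ and $\rho$ the unique solution of \eqref{eqn_mild11}. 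The one genuinely delicate point is the boundary analysis of the Clayton density $h$ as $z_1\downarrow0$ that underlies the boundedness of $\OPER_z$; everything else is bookkeeping with the exponents $\alpha,\beta,\theta$ and with the tempering, and if one preferred a sharper $k$ than $k\equiv0$ one would additionally have to track $h$ as $z_2\downarrow0$.
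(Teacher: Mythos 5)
Your verification of the easy hypotheses (the $L^2$ density of $X_0$, the condition on $g$, the finite--variation property of the subordinators, and $\alpha_0^-=\alpha>1$ for the tempered stable process) agrees with the paper. The gap is in your treatment of the operators $\OPER_z$: you establish only the $z$--uniform bound $|\phi_{\OPER_z}(\xi)|\le 2\,\nu_{1,z}(\RR_+)$ (in fact $\nu_{1,z}$ is a probability measure, so $|\phi_{\OPER_z}(\xi)|\le 2$) and then take $k\equiv 0$ in \eqref{app1}. This exploits the $\limsup_{|\xi|\to\infty}$ formulation of \eqref{app1} but empties condition \eqref{app2} of all content: the role of $k$ is to dominate the operator norm $\|\OPER_z\|_{H^\vr_2\to H^{\vr-\beta^+}_2}$ uniformly in $\xi$, so that \eqref{app2} delivers the hypothesis \eqref{lip_G} of Theorem \ref{main}, namely $\int_{|z_2|\le 1}\|\OPER_{z_2}\|^p\,\nu_2(dz_2)<\infty$. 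Since $\nu_2$ has infinite mass near the origin ($\int_0^1 z^{-\beta-1}e^{-z}\,dz=\infty$), a bound on $\|\OPER_z\|$ that does not decay as $z\to 0$ gives nothing: with only $\|\OPER_z\|\le 2$ the jump integral $\int_0^t\int_\RR\OPER^\ast_{z_2}\rho_{s^-}\,\eta_2(dz_2,ds)$ in \eqref{eqn_mild11} is not known to converge, and the fixed--point argument behind Theorem \ref{main} cannot be run. Your closing remark that tracking $h$ as $z_2\downarrow 0$ is optional is therefore wrong; it is the essential step.

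This is precisely where the paper's proof spends all of its effort: it decomposes $\phi_{\OPER_z}(\xi)=I_1(z)+I_2(z)$, integrates by parts, applies a van der Corput--type estimate and the substitutions $m=U_1(u)$, $l=U_1(v)$, $u=|v|/|U_2(z)|$ to obtain the \emph{uniform--in--$\xi$} bound $|\phi_{\OPER_z}(\xi)|\le 2\hat C\,|\xi|\,|z|$. This yields $\beta^+=1$ with $k(z)=2\hat C|z|$, and then $\int_{|z|\le 1}k(z)^p\,\nu_2(dz)=C\int_0^1 z^{p-\beta-1}\,dz<\infty$ for every $p>1\ge\beta$, after which $\alpha>1$ supplies a $p\in(1,\min\{2,\alpha\})$ with $\beta^+/\alpha_0^-<1/p$. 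To repair your argument you would need to reproduce (or replace) this decay estimate $\|\OPER_z\|\lesssim |z|$; the rest of your bookkeeping can then stand as written.
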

\begin{proof}
By Theorem 1 of  \cite[p.\ 190]{runst} we have
$$
\lk|   u(r)\, g \, \rk|^ 2 _{H_2^ {\vr -\gamma/\alpha_0^-} }\le \lk| u(r)\rk| _{H_2^ {\vr } }\, \lk| g\rk| _{H_2^ {\delta} }.
$$

Now fix $z\in\RR\setminus\{0\}$. In the first step we will investigate the symbol $\phi_{\OPER _z}$ of $\OPER _z$. The operator $\OPER _z$ is reduced to following the form with
the Clayton copula for $f\in L^2 (\RR)$,
\DEQS
\lk(  \OPER_z \, f\rk)(x)
&=& \lk(1- \theta\rk) \int_0^ \infty\lk[ f(x+y)-f(x) \rk] \\ && \times \lk( |U_1(y)|^ {-\theta}+|U_2(z)|^ {-\theta} \rk) ^ {-\frac 1\theta-2} \, |U_1(y)|^ {-\theta-1}
|U_2(z)|^ {-\theta-1 }\, \nu_1(dy).
\EEQS
For us it is important to know the upper  index of
the symbol $\phi_{\OPER _z}$ associated to $\OPER_z$. The symbol $\phi_{\OPER _z}$ is given by
\DEQS
 \phi_{\OPER _z} \, (\xi)
 &=& \lk(1+\theta\rk) \int_0^ \infty  \lk[ e^{i\xi y} - 1   \rk]\,    \\ && \times \lk( |U_1(y)|^ {-\theta}+|U_2(z)|^ {-\theta} \rk) ^ {-\frac 1\theta-2} \, |U_1(y)|^ {-\theta-1}
|U_2(z)|^ {-\theta-1 }\, \nu_1(dy).
\EEQS
By the Clayton copula, we get
\DEQS
\phi_{\OPER _z}(\xi) &=&  \, (1+\theta)\,|U_2(z)|^ {-\theta-1 }\,
\\
&&{}\times \int_0^ \infty  \lk[ e^{i\xi y} - 1-i\xi y    \rk]\,
 \lk( |U_1(y)|^ {-\theta}+|U_2(z)|^ {-\theta} \rk) ^ {-\frac 1\theta-2} \, |U_1(y)|^ {-\theta-1}
 \nu_1(dy)
 \\&+&  \, (1+\theta)\,|U_2(z)|^ {-\theta-1 }\,
\\
&&{}\times \int_0^ \infty i\xi y \,
 \lk( |U_1(y)|^ {-\theta}+|U_2(z)|^ {-\theta} \rk) ^ {-\frac 1\theta-2} \, |U_1(y)|^ {-\theta-1}
 \nu_1(dy)
 \\&=& \, (1-\theta)\,|U_2(z)|^ {-\theta-1 }\,\times  2 (i\xi)^2  \int_0^\infty  \int_0^y \int_0^v
\\
&&{}  e^{i\xi u} \, du  \, dv  \,
 \lk( |U_1(y)|^ {-\theta}+|U_2(z)|^ {-\theta} \rk) ^ {-\frac 1\theta-2} \, |U_1(y)|^ {-\theta-1}
f_1(y)\, dy
\\&+&  \, (1+\theta)\,|U_2(z)|^ {-\theta-1 }\,
\\
&&{}\times \int_0^ \infty i\xi y \,
 \lk( |U_1(y)|^ {-\theta}+|U_2(z)|^ {-\theta} \rk) ^ {-\frac 1\theta-2} \, |U_1(y)|^ {-\theta-1}
 \nu_1(dy),
\EEQS
where $\nu_1(dy)=f_1(y)\, dy $.
One gets by
the Fubini's Theorem 
\DEQS
\lqq{ I_1(z) =  2 (1+\theta)\,|U_2(z)|^ {-\theta-1 }\,  (i\xi)^2 \lim_{R\rightarrow \infty}\int_{0}^R e^{i\xi y}
  }
 \\ &&{}  \lk[ \int_y^\infty \int_v^\infty \lk( |U_1(u)|^ {-\theta}+|U_2(z)|^ {-\theta} \rk) ^ {-\frac 1\theta-2} \, |U_1(u)|^ {-\theta-1}
 f_1(u)  \, du\, dv \rk] \, dy
  .
 \EEQS
Applying a version of Corput's Lemma (see \cite[p.\ 334 -
(6)]{stein}) we infer that
\DEQS
\lk| I_1  (z)\rk|  &\le &  2
(1+\theta)\,|U_2(z)|^ {-\theta-1 }\, |\xi|\,\lim_{R\rightarrow \infty}\lk|\int_{0}^R e^{i\xi y}\,dy\rk|
\\
&&
 \lk| \int_{0}^\infty
\int_v^\infty  \lk( |U_1(u)|^ {-\theta}+|U_2(z)|^ {-\theta} \rk) ^
{-\frac 1\theta-2} \, |U_1(u)|^ {-\theta-1}
 f_1(u)  \, \, du\, dv \rk|
 \\&\le&
  2
(1+\theta)\,|U_2(z)|^ {-\theta-1 }\, |\xi|\,
\\
&&
 \lk| \int_{0}^\infty
\int_v^\infty  \lk( |U_1(u)|^ {-\theta}+|U_2(z)|^ {-\theta} \rk) ^
{-\frac 1\theta-2} \, |U_1(u)|^ {-\theta-1}
 f_1(u)  \, \, du\, dv \rk|  .
\EEQS
Substitution with $m=U_1(u)$ gives the estimate
\DEQS
\lqq{ \lk| I_1
(z)\rk|\le 2 (1+\theta)\,|U_2(z)|^ {-\theta-1 }\, } &&\\
 & & |\xi|\, \lk|
\int_{0}^\infty \int_{U_1(v)}^0  \lk( |m|^ {-\theta}+|U_2(z)|^
{-\theta} \rk) ^ {-\frac 1\theta-2} \, |m|^ {-\theta-1} dm \, dv
\rk| , \EEQS
from which we deduce that
\DEQS \lqq{ \lk| I_1  (z)\rk|}
\\
 &\le & 2
(1+\theta)\,|U_2(z)|^ {-\theta-1 }\, |\xi|\, \lk| \int_{0}^\infty
\lk( |{U_1(v)}|^ {-\theta}+|U_2(z)|^ {-\theta} \rk) ^ {-\frac
1\theta-1}  \, dv \rk| . \EEQS
Again, substitution with $l=U_1(v)$
gives
\DEQS
\lqq{ \lk| I_1  (z)\rk|  \le  2 (1+\theta)\,|U_2(z)|^
{-\theta-1 }\, |\xi|\,}
\\ &&  \lk| \int_{0}^\infty \lk( |l|^
{-\theta}+|U_2(z)|^ {-\theta} \rk) ^ {-\frac 1\theta-1} {1\over
f_1\lk( U^{-1}_1(l)\rk)} \, dl \rk| . \EEQS Observe, we have
$$
U_1^{-1}(l)= \beta ^\frac 1\beta \, l^{-1/\beta}.
$$
Since $f_1(y)=y^{-1-\beta}$ we get for any $\gamma>0$
$$
g(l):= {1\over  f_1\lk( U^{-1}_1(l)\rk)} = C_\beta l ^{-\frac 1\beta -1}.
$$
Thus, we can write
\DEQS
\lk| I_1 (z)\rk|  &\le & 2
(1+\theta)C_\beta  |\xi|\, 
  \int_{0}^\infty   \lk( \lk({ |l|\over |U_2(z)|}\rk) ^ {-\theta}+1 \rk) ^ {-\frac 1\theta-1}l ^{-\frac 1\beta -1}  \, dl .
\EEQS
Substitution gives
\DEQS
\lk| I_1  (z)\rk|  &\le & 2
(1+\theta)C_\beta  |\xi|\, 
 \int_{0}^\infty   \lk( {u } ^ {-\theta}+1 \rk) ^ {-\frac 1\theta-1} (  u \, U_2(z) ) ^{-\frac 1\beta -1} \, U_2(z) \, du
\\&\le &
2
(1+\theta)C_\beta |\xi|\,  |U_2(z)|^ {-\frac 1\beta }
 \int_{0}^\infty   \lk( {u } ^ {-\theta}+1 \rk) ^ {-\frac 1\theta-1}   u  ^{-\frac 1\beta -1} \, du  .
\EEQS
Now consider,
\DEQS
\lk| I_2 (z)\rk|  &\le &  2
(1+\theta)\,|U_2(z)|^ {-\theta-1 }
\\
&&
 \lk| \int_{0}^\infty
i\xi y \lk( |U_1(y)|^ {-\theta}+|U_2(z)|^ {-\theta} \rk) ^
{-\frac 1\theta-2} \, |U_1(y)|^ {-\theta-1}
 f_1(y)  \, \, dy \rk|.
\EEQS
Substitution with $v=U_1(y)$ gives the estimate
\DEQS
\lk| I_2 (z)\rk|  &\le &  2
(1+\theta)\beta^{\frac{1}{\beta}}\,|U_2(z)|^ {-\theta-1 }\,|\xi|
\\
&&
 \lk| \int_{0}^\infty
  \lk( |v|^ {-\theta}+|U_2(z)|^ {-\theta} \rk) ^
{-\frac 1\theta-2} \, |v|^ {-\theta-\frac{1}{\beta}-1}
 \, dv \rk|
 \\&= &  2
(1+\theta)\beta^{\frac{1}{\beta}}\,|U_2(z)|^ {\theta}\,|\xi|
\\
&&
 \lk| \int_{0}^\infty
  \lk( \lk(\frac{|v|}{|U_2(z)|}\rk)^ {-\theta}+1 \rk) ^
{-\frac 1\theta-2} \, |v|^ {-\theta-\frac{1}{\beta}-1}
 \, dv \rk|.
\EEQS
Now take $u=\frac{|v|}{|U_2(z)|}$ as a substitution to obtain,
\DEQS
\lk| I_2 (z)\rk|  &\le &  2
(1+\theta)\beta^{\frac{1}{\beta}}\,|U_2(z)|^ {-\frac{1}{\beta}}\,|\xi|
\\
&&
  \int_{0}^\infty
  \lk( u^{-\theta}+1 \rk) ^
{-\frac 1\theta-2} \, u^ {-\theta-\frac{1}{\beta}-1}
 \, du.
\EEQS
Since $U_2(z)=\beta |z|^ {-\beta}$ we have
\DEQS
\lk| I_1(z)+I_2(z)\rk|  &\le & 2 \hat{C} |\xi|\,  |z|,
\EEQS
where $$\hat{C}=2(1+\theta)\lk(C_\beta\int_{0}^\infty   \lk( {u } ^ {-\theta}+1 \rk) ^ {-\frac 1\theta-1}   u  ^{-\frac 1\beta -1} \, du +\beta^{\frac{1}{\beta}}\int_{0}^\infty
  \lk( u^{-\theta}+1 \rk) ^
{-\frac 1\theta-2} \, u^ {-\theta-\frac{1}{\beta}-1}
 \, du\rk).$$
Since $\beta\le 1$,  we have
$$\int_{|z|\le 1} k_2(z)^p\nu_2(dz)=\int_{-1}^ 1  |z|^p \, |z|^{-\beta-1}\,
dz<\infty,
$$
for any $p>1$. This shows that the upper index of $\OPER _z$ is
$1$. Since $\alpha>1$, there exists a number $p>1$ such that
$$
{\beta^{+}\over \alpha^{-}}<\frac 1p.
$$
By the assumptions, the law of $X_0$ has a density function $F$ which is integrable and  $\rho_0(f)=\int_\RR \rho_0(x)\, f(x)\, dx$. Therefore we have $\rho_0\in L^2 (\RR)$.
Hence, by Theorem \ref{measurevalued} one can conclude the proof of Corollary 3.1.
\end{proof}
\appendix

\section{The Zakai Equation as a stochastic evolution equation}
\label{aA}

In this appendix we treat the Zakai equation as a stochastic
evolution equation on a Hilbert space and establish the
existence and uniqueness of its mild solution.
 For doing so, let $\mathfrak{X}$ be a Hilbert space, $A$ be a possibly unbounded operator generating an analytic
 $C_0$ semigroup $(T_A (t))_{t\ge 0}$ on $\mathfrak{X}$.
Let $\eta$ be a time homogenous Poisson random measure with \levy measure $\nu$ on a measurable space
$(Z,\mathcal{Z})$  over a probability space
$(\Omega,\CG,(\CG_t)_{t\ge 0},\QQ)$ and $B=\{B(t):t\ge 0\}$ be a
$1$--dimensional Brownian motion defined over the same filtered
probability space. Let $f:\mathfrak{X}\to \mathfrak{X}$,
$\Sigma:\mathfrak{X}\to \mathfrak{X}$ be two mappings and
{$G:[0,T]\times \mathfrak{X}\times\RR\to\mathfrak{X}$} be a
progressively measurable mapping.
Consider the following equation with random initial data $u_0$:
\DEQSZ\label{eqn_mild}
\lk\{ \barray du (t) &=& \lk( Au (t) + f(u (t))\rk) \, dt + \Sigma(u(t))\, dB(t)
\\ &&{}+\int_\RR  G( t,u (t^-),z) \tilde{\eta}(dz,dt),\\
u (0)&=& u_0\in \mathfrak{X},\;\PP\;a.s.,
\earray \rk.
\EEQSZ
{where $\tilde{\eta}(dz,dt)=\eta(dz,dt)-\nu(dz)dt$ is the compensated Poisson random measure.} Now we define the concept of solution we have in mind.
\begin{defn}
We call a stochastic process $u=\{ u(t):t\ge 0\}$  a mild solution to
\eqref{eqn_mild}, if $u $ is \cadlag in $\mathfrak{X}$ and
satisfies $\PP$-a.s.
\DEQS \lqq{u(t) = u_0 + \int_0^t T_A({t-}r)
f(u(r)) \, dr}
\\ &&{} + \int_0^ t T_A(t-r) \Sigma(u(r))\, dB(r)  + \int_0^t \int_{\RR}T_A(t-r) G(r,u(r^-),z) \, \tilde \eta(dz,dr).
\EEQS
\end{defn}
We state and prove the following result.
\newcommand{\pp}{q}

\begin{thm}\label{main}
Fix $\vr\in\RR$.
Let us assume that
\begin{itemize}
\item there exists some $\vr _0> -1$ such that $u_0\in H^ {\vr _0}_2(\RR)$, $\PP$ a.s.;
  \item the operator $A$ has  symbol $\psi$ with lower   Blumenthal--Getoor index $\alpha^-_0$;
\item there exists a $\delta_f<  {\alpha^ -_0}$ and a constant $C_f>0$ with 
$$
  \lk| f(x)-f(y)\rk|_{H^ {\vr-\delta_f}_2} \le C_f|x-y|_{H^\vr_2} ,\quad  x,y\in H^ \vr_2(\RR)
$$
\item there exists a  $\delta_\Sigma< \frac {\alpha^ -_0}2$ and  a
constant $C_\Sigma>0$ such that
$$
 \lk| \Sigma(x)-\Sigma(y)\rk|_{H^ {\vr-\delta_\Sigma}_2}\le C_\Sigma |x-y|_{H^\vr_2},\quad x,y\in H^ \vr_2(\RR);
$$
  \item there exists $\beta^+\ge 0$ and some $\pp\in[1,2]$ such that  the operator $G$ satisfies the following inequality
\DEQSZ\label{lip_G}
  \int_{|z|\le 1} \lk| G(r,x,z)-G(r,y,z)\rk|_{H^ {\vr-\beta^+}_2(\RR)}^ \pp\nu(dz) \le C_G|x-y|^ \pp_{H^\vr_2},
\EEQSZ
 $y,x\in  H^\vr_2(\RR)$ and for $|z|\ge 1$, $r\in[0,T]$,
\DEQSZ\label{lip_Gbig}
\\
\nonumber
 \lk| G(r,x,z)-G(r,x,z)\rk|_{H^ {\vr-\beta^+}_2(\RR)} \le C_G |x-y|_{H^\rho_2},\quad y,x\in  H^ \vr_2(\RR),\;r\in[0,T].
\EEQSZ
\end{itemize}
In addition, if 
 $$\vr -\vr _0<\frac 1 \pp,  \quad \mbox{ and } \quad {\beta^+\over \alpha^ -_0}<\frac 1\pp,
%
$$
then, there exists a mild solution $u$ belonging $\PP$-a.s.\ to  $\DD((0,T],H^\vr _2(\RR))\cap \DD([0,T];H_2^{\vr_0}(\RR))$ of
the stochastic evolution equation
\DEQSZ\label{eqn_mild111}
\\
\nonumber
\lk\{ \barray du (t) &=& \lk( A u (t)+ f(u (t))\rk) \, dt + \Sigma( u (t))\, dB(t)
\\ &&{}+\int_\RR G(t,u (t^-),z) \, \tilde{\eta}(dz,dt),\\
u (0)&=& u_0,\;\PP\;a.s. 
\earray \rk.
\EEQSZ
with random initial data $u_0\in H^ {\rho_0}_2(\RR)$.
\del{ with $u$.
and  we have for $\rho <\rho _0+\frac 1p$
$$
\EE^ \QQ\lk[ \int_0^ T |u(t)|_{H_2^ {\rho }}^ \rho\, dt\rk] <\infty.
$$
}

\end{thm}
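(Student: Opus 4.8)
The plan is to obtain $u$ as the unique fixed point of the mild-solution map
\begin{multline*}
(\Phi u)(t)= T_A(t)u_0 + \int_0^t T_A(t-r)f(u(r))\,dr + \int_0^t T_A(t-r)\Sigma(u(r))\,dB(r)\\
{}+ \int_0^t\!\int_\RR T_A(t-r)G(r,u(r^-),z)\,\tilde\eta(dz,dr)
\end{multline*}
on a short interval $[0,T_0]$, and then to patch together the resulting local solutions to cover all of $[0,T]$; the patching is legitimate because $T_0$ will depend only on $C_f,C_\Sigma,C_G$ and not on the initial datum. The single analytic ingredient is the smoothing estimate for the analytic semigroup $(T_A(t))_{t\ge 0}$ generated by $A$: since the symbol of $A$ has lower Blumenthal--Getoor index $\alpha^-_0$, one has, for $\gamma$ in the relevant range, a constant $C=C(\gamma,T)$ with $|T_A(t)x|_{H^{s+\gamma}_2}\le C\,t^{-\gamma/\alpha^-_0}\,|x|_{H^s_2}$ for $t\in(0,T]$, $s\in\RR$ (see \cite{hsym}). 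Because $u_0$ lies only in $H^{\vr_0}_2$ with $\vr-\vr_0<\tinv\pp$, I would run the contraction in a time-weighted Banach space of predictable processes, controlling $\EE^\QQ|u(t)|^\pp_{H^\vr_2}$ against a weight that is integrable in $t$ near $0$ and, simultaneously, $\EE^\QQ\sup_{[0,T_0]}|u(t)|^\pp_{H^{\vr_0}_2}$; the two smallness hypotheses $\vr-\vr_0<\tinv\pp$ and $\beta^+/\alpha^-_0<\tinv\pp$ are exactly what ensure that a compatible weight exists.

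Estimating the four terms of $\Phi u$ is then routine. For the first term, $|T_A(t)u_0|_{H^\vr_2}\le C\,t^{-(\vr-\vr_0)/\alpha^-_0}\,|u_0|_{H^{\vr_0}_2}$, and the hypothesis on $\vr-\vr_0$ makes this compatible with the weight. For the Bochner term one combines $|f(u(r))|_{H^{\vr-\delta_f}_2}\lesssim 1+|u(r)|_{H^\vr_2}$ with the smoothing bound $|T_A(t-r)\,\cdot\,|_{\mathcal L(H^{\vr-\delta_f}_2,H^\vr_2)}\le C(t-r)^{-\delta_f/\alpha^-_0}$, the exponent $\delta_f/\alpha^-_0<1$ giving an integrable singularity. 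For the Wiener convolution I would use the factorization method: for a suitable $\alpha\in(0,\tfrac12)$,
\begin{multline*}
\int_0^t T_A(t-r)\Sigma(u(r))\,dB(r)=c_\alpha\int_0^t (t-s)^{\alpha-1}T_A(t-s)\,Y_\alpha(s)\,ds,\\
\text{where}\quad Y_\alpha(s)=\int_0^s(s-r)^{-\alpha}T_A(s-r)\Sigma(u(r))\,dB(r);
\end{multline*}
one estimates $Y_\alpha$ in $L^\pp(\Omega;H^\vr_2)$ by the Burkholder--Davis--Gundy inequality, the condition $\delta_\Sigma<\tfrac12\alpha^-_0$ leaving room to choose $\alpha$ with $\alpha+\delta_\Sigma/\alpha^-_0<\tfrac12$, and then treats the remaining deterministic convolution against the integrable kernel $(t-s)^{\alpha-1}$ as before, which also legitimises passing to $\sup_t$. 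Since $\vr_0\le\vr$, the map $x\mapsto T_A(t)x$ is bounded $H^\vr_2\to H^{\vr_0}_2$ uniformly in $t\in[0,T_0]$, so the same estimates also deliver the $\DD([0,T_0];H^{\vr_0}_2)$-control.

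The delicate term is the stochastic convolution against the compensated Poisson random measure. I would factorize it in the same way,
\begin{multline*}
\int_0^t\!\int_\RR T_A(t-r)G(r,u(r^-),z)\,\tilde\eta(dz,dr)=c_\alpha\int_0^t (t-s)^{\alpha-1}T_A(t-s)\,Z_\alpha(s)\,ds,\\
\text{where}\quad Z_\alpha(s)=\int_0^s\!\int_\RR(s-r)^{-\alpha}T_A(s-r)G(r,u(r^-),z)\,\tilde\eta(dz,dr),
\end{multline*}
and bound $Z_\alpha$ in $L^\pp(\Omega;H^\vr_2)$ by the $L^\pp$-maximal inequality for stochastic integrals with respect to compensated Poisson random measures, valid for $\pp\in[1,2]$, which dominates the $\pp$-th moment by $\EE^\QQ\int_0^s\int_{|z|\le 1}(s-r)^{-\alpha\pp}\,|T_A(s-r)G(r,u(r^-),z)|^\pp_{H^\vr_2}\,\nu(dz)\,dr$ plus the large-jump contribution; invoking \eqref{lip_G}, \eqref{lip_Gbig} and $|T_A(s-r)\,\cdot\,|_{\mathcal L(H^{\vr-\beta^+}_2,H^\vr_2)}\le C(s-r)^{-\beta^+/\alpha^-_0}$ one is led to require $\pp(\alpha+\beta^+/\alpha^-_0)<1$, solvable in $\alpha>0$ precisely because $\beta^+/\alpha^-_0<\tinv\pp$. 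With all four estimates in hand, $\Phi$ maps the weighted space into itself; repeating the computations with differences $u-v$, using the Lipschitz hypotheses instead of the linear growth bounds, shows $\Phi$ is a strict contraction once $T_0$ is small, hence there is a unique local mild solution, and concatenation yields the solution on $[0,T]$. The path statement follows from the factorization representations: the two deterministic convolutions are continuous into $H^\vr_2$ on $(0,T]$, the smoothed stochastic convolutions have \cadlag (continuous, for the Wiener part) paths, and at $t=0$ only the $H^{\vr_0}_2$-valued \cadlag property survives because of the $T_A(t)u_0$ term, so that $u\in\DD((0,T],H^\vr_2(\RR))\cap\DD([0,T];H^{\vr_0}_2(\RR))$.

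The main difficulty is the exponent bookkeeping for the Poisson term: one must choose a single factorization parameter $\alpha$ and a single weight exponent that simultaneously absorb the semigroup singularity $(t-s)^{-\beta^+/\alpha^-_0}$, accommodate that the jump noise forces $L^\pp$- rather than $L^2$-integrability for some $\pp<2$, and tolerate the low regularity $\vr_0>-1$ of the initial datum; the hypotheses $\vr-\vr_0<\tinv\pp$ and $\beta^+/\alpha^-_0<\tinv\pp$ are precisely the constraints that make such a choice possible, and checking that the $L^\pp$-maximal inequality interacts correctly with the factorization kernel $(t-s)^{\alpha-1}$ (in $L^\pp(\Omega)$ rather than $L^2(\Omega)$) is the only genuinely non-routine point.
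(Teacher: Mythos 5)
Your overall architecture coincides with the paper's: a Banach fixed point argument for the mild-solution map in a weighted space of $H^\vr_2$-valued processes, driven by the semigroup smoothing estimate $|T_A(t)x|_{H^{s+\gamma}_2}\le Ct^{-\gamma/\alpha^-_0}|x|_{H^s_2}$ from \cite{hsym}, with the hypotheses $\vr-\vr_0<1/\pp$ and $\beta^+/\alpha^-_0<1/\pp$ playing exactly the roles you assign them. The execution differs in three places. First, the paper obtains the contraction globally on $[0,T]$ by working in $\CM^\pp_{\lambda,\vr}$ with the exponential weight $e^{-\lambda t}$ and letting $\lambda\to\infty$ (the contraction constant is $\hat C\lambda^{-\varepsilon}$), rather than by shrinking $T_0$ and patching; both are standard and equivalent here. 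Second, for the stochastic convolutions the paper does not factorize: it applies the Burkholder/Bichteler--Jacod-type $L^\pp$ maximal inequalities directly to $\int_0^t T_A(t-r)\Sigma(u(r))\,dB(r)$ and to the Poisson convolution, and then uses Young's inequality for the resulting deterministic convolution with the singular kernel $t^{-2\delta_\Sigma/\alpha^-_0}$, resp.\ $t^{-2\beta^+/\alpha^-_0}$. Third, the paper first proves everything under the strengthened hypothesis that \eqref{lip_G} holds with the integral over all of $\RR$, and then removes it by interlacing: it solves the equation with only the small jumps (and the large-jump compensator moved into the drift) between the finitely many stopping times at which a jump of size $\ge 1$ occurs, and glues the pieces together as in \cite[Theorem 2.5.1]{applebaum}. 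Your plan to fold the large jumps directly into the maximal inequality via \eqref{lip_Gbig} and the finiteness of $\nu$ on $\{|z|\ge1\}$ is defensible, but you should say explicitly that this is what you are using.

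The one step I do not accept as written is the use of the factorization method on the Poisson convolution, in particular to conclude the \cadlag property. The factorization identity for
$\int_0^t\int_\RR T_A(t-r)G(r,u(r^-),z)\,\tilde\eta(dz,dr)$
holds for each fixed $t$ almost surely, so it can be used for the moment bounds (though it is an unnecessary detour and still requires a stochastic Fubini argument you do not supply); but pathwise the representation
$c_\alpha\int_0^t(t-s)^{\alpha-1}T_A(t-s)Z_\alpha(s)\,ds$
is either a \emph{continuous} function of $t$ (when $Z_\alpha\in L^q([0,T];H^\vr_2)$ pathwise with $q>1/\alpha$) or not obviously convergent at all, since your maximal inequality only gives $Z_\alpha\in L^\pp$ with $\pp\le 2$, while $\alpha$ must be taken small. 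A Poisson stochastic convolution genuinely jumps by $G(\tau,u(\tau^-),z)$ at the jump times of $\eta$ and admits no continuous indistinguishable version, so the factorization representation cannot deliver the \cadlag modification you need. The paper sidesteps this entirely by invoking the maximal inequality of \cite[Proposition 1.3]{jan} for the regularity of the Poisson convolution (and \cite[Theorem 5.9]{DZ} for the Wiener part); you should replace your factorization argument for the jump term by such a result, or give an independent proof of the \cadlag property.
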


\begin{proof}
First we tackle the case where the $q$--moments are bounded, i.e.\
we suppose
\DEQSZ\label{lip_g}
  \int \lk| G(r,x,z)-G(r,y,z)\rk|_{H^ {\vr-\beta^+}_2(\RR)}^ \pp\nu(dz) \le C_G|x-y|^ \pp_{H^\vr_2},
\EEQSZ
for $y,x\in  H^ \vr_2(\RR)$, $r\in[0,T]$. Let $\vr <\vr _0+\frac 1\pp  $ and
\DEQS
\lqq{\CM^ \pp _{\lambda,\vr } ([0,T]\times \Omega; \RR):= \biggl\{ u: [0,T]\times \Omega\to\RR,\,\biggr.
}&&\\ &&\lk.\mbox{ $u$ is progressively measurable and }\,  \EE \int_0^ T e^{-\lambda t} |u(t)|_{H^ \vr_2  (\RR)}^ \pp  \, dt <\infty\rk\}
\EEQS
equipped with the norm
$$
|u|_{\CM^ \pp  _{\lambda,\vr }} := \lk( \EE \int_0^ T e^{-\lambda t} |u(t)|_{H^ \vr _2 (\RR)}^ \pp  \, dt\rk) ^ \frac 1\pp ,\quad u\in \CM^ \pp _{\lambda,\vr } ([0,T]\times \Omega; \RR).
$$
Now, the existence of the mild solution will be established by making use of Banach fixed point Theorem (see e.g. \cite{ejp}).
For any $\lambda>0$ let us define the operator
$$\CI:\CM^ \pp_{\lambda,\vr } ([0,T]\times \Omega; \RR)\to \CM^ \pp_{\lambda,\vr } ([0,T]\times \Omega; \RR)
$$
by
\DEQS
\lqq{
\CI(u)(t) =T_{A} (t) u _0+ \int_0^t T_A({t-}r) f(u(r)) \, dr+\int_0^t T_{A} (t-r)    \del{+ \int_0^t T_{A} (t-r) \, u(r) h \, dr }} &&
\\
&&{}\times\,\Sigma(u(r))  dB(r) + \int_0^t  \int_{|z|\le 1}  T_{A} (t-r) G(r, u(r^-),z) \, \tilde{\eta}(dz,dr)
\del{\\
&&{} + \int_0^t  \int_{|z|\ge 1} T_{A} (t-r) \CBB _{z_2} u(r) \, \eta(dz,dr)} ,\quad t\ge 0,
\EEQS
and $u\in \CM^ q_{\lambda,\vr } ([0,T]\times \Omega; \RR)$. First, we have to show that $\CI$ maps $\CM^ \pp_{\lambda,\vr }
([0,T]\times \Omega; \RR)$ into itself.\del{In this calculation, we use notation $C$ as a generic constant associate with the all estimates.} Since the symbol of
$\psi_0$ has a Blumenthal--Getoor lower index $\alpha_0^ -$,  Theorem 2.1
in \cite{hsym} implies that for $\gamma\ge 0$ and
$\delta\in\RR$
\DEQSZ\label{oben11}
\lk|T_{A}(t) u _0\rk|_{H^ {\delta-\gamma}_2(\RR) }\le {C t^{- \frac \gamma{\alpha^-_0}} } |u _0|_{ H_2^ {\delta }(\RR)}, \quad u _0\in { H_2^ {\delta}(\RR)}.
\EEQSZ
Hence,
\DEQS
\lqq{
\EE\int_0^T e^{-\lambda r} \lk|T_{A}(r) u _0\rk|^\pp_{H^ {\vr }_2 }\, dr}
\\
&\le&  C\EE\int_0^T e^{-\lambda r} r^{-\frac {\pp(\rho -\rho _0)}{\alpha^-_0}}  \lk|u _0\rk|^\pp_{H_2^ {\rho _0} }\, dr\le {C\over \lambda^{1-\frac {\pp(\rho -\rho _0)}{\alpha^-_0}}}
\,  \EE\lk|u _0\rk|^\vr_{H_2^ {\vr _0} }.
\EEQS
The Minkowski's integral inequality and the assumption regarding on $f$ give for the second term
\DEQS
\lqq{
\EE\int_0^ T e^{-\lambda t}\lk|\int_0^t T_A({t-}r) f(u(r)) \, dr\rk|_{H^ \vr_2}^\pp\, dt }
&&
\\&\le &\int_0^ T e^ {-\lambda t} \EE\lk(\int_0^t \lk|T_A({t-}r) f(u(r))\rk|_{H^ \vr_2} \, dr\rk)^\pp\, dt
\\&\le &C^q\int_0^ T \EE\lk(\int_0^t e^ {-\frac{\lambda (t-r)}{q}} (t-r)^ {-\frac{\delta_f}{\alpha_0^ -}} e^ {-\frac{\lambda r}{q}}\lk| f(u(r))\rk|_{H^ {\vr-\delta_f}_2} \, dr\rk)^\pp\, dt
\\&\le &(CC_f)^q\int_0^ T \EE\lk(\int_0^t e^ {-\frac{\lambda (t-r)}{q}} (t-r)^ {-\frac{\delta_f}{\alpha_0^ -}}e^ {-\frac{\lambda r}{q}}(1+\lk| u(r)\rk|_{H^ {\vr}_2} )\, dr\rk)^\pp\, dt
\EEQS
Applying Young's inequality for the convolution term gives
\DEQS
\lqq{
\EE\int_0^ Te^{-\lambda t} \lk|\int_0^t T_A({t-}r) f(u(r)) \, dr\rk|_{H^ \vr_2}^\pp\, dt }
&&
\\&\le &
(CC_f)^q\int_0^ T e^ {-\frac{\lambda r}{q}} r^ {-\frac{\delta_f}{\alpha_0^ -}}\, dr \cdot \int_0^ T e^ {-\lambda r}  \EE(1+\lk| u(r)\rk|_{H^ {\vr}_2} )^q \,\, dr
\\&\le &
{C_1\over \lambda^ {1-\frac{\delta_f}{\alpha_0^ -}}}
 \cdot \int_0^ T e^ {-\lambda r}  \EE(1+\lk| u(r)\rk|_{H^ {\vr}_2} )^q \,\, dr<\infty,
\EEQS
where $C_1=(CC_f)^qq^{1-\frac{\delta_f}{\alpha_0^ -}}\int_0^{\frac{\lambda T}{q}}e^{-\theta}\theta^{-\frac{\delta_f}{\alpha_0^ -}}\,d\theta$. For the third term, we get
\DEQS
 \EE\int_0^ T \lk[ \int_0^t   \lk|e^ {-\frac{\lambda(t-r)}{q}}  T_{A} (t-r) e^ {-\frac{\lambda r}{q}} \Sigma( u(r))  \, \rk|^ 2 _{H^ \vr_2 } dr \rk]^\frac \pp 2 \, dt
  \\
  \le   C^q\EE\int_0^ T \lk[ \int_0^t (t-r)^{-\frac{2\delta_\Sigma}{\alpha_0^-}} \,e^ {-\frac{2\lambda(t-r)}{q}}\,  e^ {-\frac{2\lambda r}{q}} \lk|    \Sigma (u(r))\, \rk|^ 2 _{H_2^ {\vr-\frac{\delta_\Sigma}{\alpha_0^-}} } dr \rk]^\frac \pp 2 \, dt.
\EEQS By the assumption on $\Sigma$ we can infer that
 \DEQS \ldots
\le (CC_\Sigma)^q\EE\,\int_0^ T \lk[ \int_0^t (t-r)^{-\frac{2\delta_\Sigma}{\alpha_0^-}}
\,e^ {-\frac{2\lambda(t-r)}{q}}\,  e^ {-\frac{2\lambda r}{q}}(1+ \lk|   u(r) \rk| _{H_2^
{\vr } })^ 2 dr \rk]^\frac \pp 2 \, dt. \EEQS Then applying Young's inequality
for the convolution \DEQS \ldots \le (CC_\Sigma)^q\int_0^ T
r^{-\frac{2\delta_\Sigma}{\alpha_0^-}} \,e^ {-\frac{2\lambda r}{q}}\,\, dr\cdot\int_0^T
e^ {-\lambda r}\EE(1+\lk|   u(r)\rk| _{H_2^ {\vr } })^q dr. \EEQS
Hence, we have \DEQS
 &&\EE\int_0^T e ^{-\lambda t}\lk| \int_0^t T_{A} (t-r) \Sigma(u(r))  dW(r) \rk|^q _{H_2^\vr} \, dt\\
&\le &{C_2\over \lambda ^{1-\frac{2\delta_\Sigma}{\alpha_0^-}} } \,  \int_0^T e ^{-\lambda t}\EE(1+\lk|u(t)\rk|)^q _{H_2^\vr} \,  dt<\infty,
\EEQS
where $C_2=(CC_\Sigma)^q(q/2)^{1-\frac{2\delta_\Sigma}{\alpha_0^ -}}\int_0^{\frac{2\lambda T}{q}}e^{-\theta}\theta^{-\frac{\delta_\Sigma}{\alpha_0^ -}}\,d\theta$.
It remains to calculate the fourth term.
By the assumptions on $G$ and $A$ we get
\DEQSZ\label{KEY}
\\
\nonumber
\int_\RR \lk| T_{A}(t)  G(r,x,z) \rk|^ \pp _{H^ \rho _2 (\RR^d )} \nu(dz) \le {C}^q\,  t^ {-\pp \, { \beta^+\over \alpha ^ -_0}}\,  (1+\lk|x\rk| _{H^ \vr _2})^ \pp ,\quad x\in H^ \vr_2 (\RR).
\EEQSZ
In particular,  if
$$2 \, {\beta^+ \over\alpha _0^ - }<1 
$$
\del{then one can apply the Young inequality for convolution and get
\DEQS
\lqq{ \int_0^T e^{-\lambda t} \lk|\int_0^t  \int_\RR  T_{A} (t-r) G(r,u(r),z) \, \eta(dz,dr)\rk|_{H^\rho_2}^q\, dt }&&
\\
&\le &  \int_0^T e^{-\lambda t}\int_0^t  \int_\RR
 (t-r)^ {-\pp
\, { \beta^+\over \alpha ^ -_0}}\,  \lk| G(r,u(r),z
)\rk|_{H^\rho_2}^q\, \, dr \, dt
\\
&\le& {C\over \lambda ^{1-\pp \, {
\beta^+\over \alpha ^ -_0} }} \int_0^T e^{-\lambda t} \lk|
u(t)\rk|^q_{H^\rho_2}\, dt.
\EEQS}
 one can deal with the fourth term as follows. Consider firstly,
\DEQS
 \EE\int_0^ T \lk[ \int_0^t\int_\RR   \lk|e^ {-\frac{\lambda(t-r)}{q}}  T_{A} (t-r) e^ {-\frac{\lambda r}{q}} G(r,u(r),z)  \, \rk|^ 2 _{H^ \vr_2 }\nu(\,dz)\,dr \rk]^\frac \pp 2 \, dt
  \\
  \le (CC_G)^q\EE\int_0^ T \lk[ \int_0^t (t-r)^{-\frac{2\beta^+}{\alpha_0^-}} \,e^ {-\frac{2\lambda(t-r)}{q}}\,  e^ {-\frac{2\lambda r}{q}}(1+\lk|u(r)\rk| _{H^ \vr _2})^2  dr \rk]^\frac \pp 2 \, dt.
  \EEQS

By following similar argument as in Brownian term, we have \DEQS
 &&\EE\int_0^T e ^{-\lambda t}\lk| \int_0^t\int_\RR T_{A} (t-r)G(r,u(r^-),z) \, \tilde{\eta}(dz,dr) \rk|^q _{H_2^\vr} \, dt\\
&\le &{C_3\over \lambda ^{1-\frac{2\beta^+}{\alpha_0^-}} } \,  \int_0^T e ^{-\lambda t}\EE(1+\lk|u(t)\rk|)^q _{H_2^\vr} \,  dt<\infty,
\EEQS
where $C_3=(CC_G)^q(q/2)^{1-\frac{2\beta^+}{\alpha_0^ -}}\int_0^{\frac{2\lambda T}{q}}e^{-\theta}\theta^{-\frac{\beta^+}{\alpha_0^ -}}\,d\theta$.
Then collecting all estimates yields
that $\CI$ maps $\CM^ \pp_{\lambda,\vr } ([0,T]\times \Omega;
\RR)$ into itself.

Next, we will show that there exists a $\lambda>0$ such that the
operator $\CI:\CM^ \pp_{\lambda,\vr } ([0,T]\times \Omega;
\RR)\to \CM^ \pp_{\lambda,\vr } ([0,T]\times \Omega; \RR)$
is a strict contraction. To show the claim, let $u,v\in \CM^ \pp _{\lambda,\vr } ([0,T]\times \Omega; \RR)$. Then
\DEQS
\lqq{
\lk| \CI(u)-\CI(v)\rk|^ \pp _{\CM^ \pp _{\lambda,\vr }}} &&
\\
&\le &   \int_0^ T e^ {-t\lambda} \EE \lk|\int_0^t T_{A} (t-r)  \lk(\Sigma(u(r))-\Sigma(v(r))\rk)  dB(r) \rk|^ \pp _{H^ \vr_2 } \, dt
\\
&&{} + \int_0^ T e^ {-t\lambda} \EE \lk| \int_0^t  \int_\RR  T_{A} (t-r) \lk(  G(r,u(r^-),z)- G(r, v(r^-),z)\rk)  \eta(dz,dr)\rk|_{H^ \vr_2 } ^ \pp  \, dt
\\
&& \int_0^ T e^ {-t\lambda} \EE \lk|\int_0^t T_{A} (t-r)  \lk(f(u(r))-f(v(r))\rk) \, dr \rk|^ \pp _{H^ \vr_2 } \, dt
.
\EEQS

\del{Observe, over $(\Omega,\CF,\QQ)$ the process $B$ is a Brownian motion. Hence,
the Burkholder and Minkowski inequality gives
\DEQS
\lqq{
\lk| \CI(u)-\CI(v)\rk|^ \pp _{\CM^ \pp _{\lambda,\rho }}} &&
\\
&\le & C     \int_0^ T e^ {-t\lambda} \lk[ \int_0^t  \EE \lk|T_{A} (t-r) \lk[\Sigma(u(r))-\Sigma(v(r))\rk]\, \rk|^ 2 _{H^ \rho_2 } dr \rk]^\frac \pp 2 \, dt
\\
&& + C\int_0^ T e^ {-t\lambda}  \int_0^t \int_\RR \EE \lk|  T_{A} (t-r)  \lk(  G(r,u(r),z)- G(r, v(r),z)\rk) \rk| ^ \pp  _{H^ \rho _2}\nu(dz)\, dr\, dt
\\
&&C\lk(  \int_0^ T e^ {-t\lambda} \EE \lk(\int_0^t \lk|T_{A} (t-r)  \lk(f(u(r))-f(v(r))\rk)\rk| \, dr \rk)^ \pp _{H^ \rho_2 } \, dt \rk.
.
\EEQS
For the second term, we get
\DEQS
  \int_0^ T \lk[ \int_0^t  \EE \lk|e^ {-(t-r)\lambda}  T_{A} (t-r) e^ {-r\lambda} \lk[ u(r)-v(r)\rk]\, g \, \rk|^ 2 _{H_2^ \rho } dr \rk]^\frac \pp 2 \, dt
  \\
  \le   \int_0^ T \lk[ \int_0^t  \EE (t-r)^{-2\delta_\Sigma/\alpha_0^-} \,e^ {-(t-r)\lambda}\,  e^ {-r\lambda} \lk|    \lk[ u(r)-v(r)\rk]\, g \, \rk|^ 2 _{H_2^ {\rho -\delta_\Sigma} } dr \rk]^\frac \pp 2 \, dt.
\EEQS
Therefore
\DEQS
\ldots \le
C\,   \int_0^ T \lk[ \int_0^t  \EE (t-r)^{-2\delta_\Sigma/\alpha_0^-} \,e^ {-(t-r)\lambda}\,  e^ {-r\lambda} \lk|   u(r)-v(r)\, \rk|^ 2 _{H^ {\rho }_2 } dr \rk]^\frac \pp 2 \, dt.
\EEQS
The Young inequality for convolution
gives
\DEQS
\ldots \le
C\,   \lk( \int_0^ T r^{-2\delta_\Sigma/\alpha_0^-} \,e^ {-r\lambda}\,\, dr\rk) \lk(\int_0^T    e^ {-r\lambda} \lk|   u(r)-v(r)\, \rk|^ 2 _{H^ {\rho } _2} dr\rk).
\EEQS
Assumption \ref{lip_g} 
implies
\DEQS
\lqq{
\lk| \CI(u)-\CI(v)\rk|^ \pp _{\CM^ \pp _{\lambda,\rho }}} &&
\\
&\le & C     \int_0^ T e^ {-t\lambda} \lk[ \int_0^t  \EE \lk|u(r)-v(r)\rk|^ 2 _{H^ \rho_2 } dr \rk] ^\frac \pp 2\, dt
\\
&& + C\int_0^ T  \int_0^t (t-r) ^ {-\pp \frac {\beta^+}{\alpha_0^-}} e^ {-(t-r)\lambda} e^ {-r\lambda}  \EE \lk|   u(r)-v(r) \rk| ^ \pp  _{H_2^ \rho } \,dr\, dt
\\
&& + C\int_0^ T \lk( \int_0^t (t-r) ^ {- \frac {\beta^+}{\alpha_0^-}} e^ {-(t-r)\lambda} e^ {-r\lambda}  \EE \lk|   u(r)-v(r) \rk| ^ \pp  _{H_2^ \rho } \,dr\rk)^ \pp \, dt
\\
&\le & C     \int_0^T    e^ {-r\lambda} \lk|   u(r)-v(r)\, \rk|^ 2 _{H_2^ {\rho } } dr
\\
&& + C  \int_0^ T  \int_0^t(t-r) ^ {-\pp \frac {\beta^+}{\alpha_0^-}} e^ {-(t-r)\lambda} e^ {-r\lambda}  \EE \lk|   u(r)-v(r) \rk| ^ \pp  _{H_2^ \rho } \,dr\, dt
\\
&& + C  \int_0^ T  \lk( \int_0^t(t-r) ^ {- \frac {\beta^+}{\alpha_0^-}} e^ {-(t-r)\lambda} e^ {-r\lambda}  \EE \lk|   u(r)-v(r) \rk| ^ \pp  _{H_2^ \rho } \,dr\rk) ^\pp \, dt
 .
\EEQS
The Young inequality for convolution gives}
Then by following similar arguments as in previous calculation, we can easily show that,
$$\lk| \CI(u)-\CI(v)\rk|^ \pp _{\CM^ \pp _{\lambda,\vr }}\le {\hat{C}\over \lambda ^ \varepsilon} \lk| u-v\rk|^ \pp _{\CM^ \pp_{\lambda,\vr }} ,
$$
where $\hat{C}=\max\{C_1,C_2,C_3\}$ and $\varepsilon=\min\{1-\frac{\delta_f}{\alpha_0^ -},1-\frac{2\delta_\Sigma}{\alpha_0^ -},1-\frac{2\beta^+}{\alpha_0^-}\}$.
Hence $\CI$ is a strict contraction for $\lambda $ sufficiently
large.

To conclude the proof of the theorem we show that
$u\in\DD((0,T],H_2^\vr (\RR))\cap \DD([0,T];H_2^{\vr _0}(\RR))$.
For this purpose, we consider the stochastic convolution term with
respect to the Brownian term, i.e.\
$$
\int_0^t T_{A} (t-r) \Sigma( u(r)) \, dB(r).
$$
The continuity of this term follows by \cite[Theorem 5.9, p.\ 127]{DZ}.
It remains to investigate the \cadlag property of
$$
 \int_0^t  \int_\RR  T_{A} (t-r) G(r,u(r^-),z) \, \tilde{\eta}(dz,dr).
 $$
But
Proposition 1.3 in \cite{jan} leads to
\DEQS
&&
\EE\lk| \int_0^t  \int_\RR  T_{A} (t-r) G(r,u(r^-),z)\, \tilde{\eta}(dz,dr)\rk|^\pp_{H_2^{\vr -\beta^+}} \\&\le&
\EE \int_0^t  \int_\RR \lk| G(r,u(r),z)\rk|^\pp_{H_2^{\vr -\beta^+}} \nu(dz)\, dr.
\EEQS
Since for any $z\in\RR_+^0$, $G(.,.,z):H_2^ {\vr }(\RR)\to H_2^ {\vr -\beta^+}(\RR)$ is bounded,
the \cadlag property follows.

\del{In order to tackle the large jumps, we would like to recall that
the waiting times between the large jumps are exponentially
distributed. Thus, if $\{T_i:i\in\NN\}$ are the jump times with
the size of the jumps are larger than one, we need to apply the
above result on each time interval and glue together the
solutions. That is, for $i\in\NN$, let $u_i$ be a solution to
\DEQS 
\lk\{ \barray du_i (t) &=&  A u_i (t) \, dt + \Sigma( u_i (t))\, dB(t)  +\int_\RR G(t,u _i(t),z) \, \tilde{\eta}(dz,dt),\\
u (0)&=& u_{i-1}(T_{i-1})+ \int_{|z|>1} G(T_{i-1},u_{i-1}(T_{i-1}),z)\tilde{\eta}(dz,\{T_{i-1} \}). 
\earray \rk.
\EEQS
If $\{ G(r,x,z); |z|>1,x\in H^ \vr_2(\RR)\}\subset H^ {\vr_0}_2(\RR)$, for each $i\in\NN$ the solution exists and is unique.
Now, put
$$
u(t) = u_i(t),\quad T_{i-1}\le t< T_i,\quad i\in\NN.
$$
Now, $u$ is the unique solution to the problem  \eqref{eqn_mild111} and $u\in
\DD([0,T];H_2^{\vr_0}(\RR))$. Note that $u\in\DD((0,T],H^\vr
_2(\RR)) $ if $\{ G(r,x,z); |z|>1,x\in H^
\vr_2(\RR)\}\subset H^ {\vr}_2(\RR)$.}
In previous analysis, we assumed that $q$--moments are bounded of the jump term (see \eqref{lip_g}) to construct the solution to \eqref{eqn_mild111} using fixed point method. In general, we should only consider small jumps with the assumption \eqref{lip_g} and prove the existence of the solution by using fixed point method, since if we allow large jumps to occur, then the corresponding jump integral may blow up and the fixed point method will collapse.
Notice that the random jump times with jump size larger than one are independent of the $\sigma$-algebra generated by small jumps (size less than one) and Brownian motion.
In particular, the Poisson random measure is independently scattered, or in other words,  for any $U\in\mathcal{B}(\RR)$ the processes
$\eta( U\cap (-1,1)\times [0,t])$ and $\eta( U\cap \RR\setminus (-1,1)\times [0,t])$ are independent.
Therefore, now we assume that \eqref{lip_g} holds with only small jumps (size less than one). Let $\{T_i:i=1,\ldots,n\}$ be the random jump times (stopping times) with the size of the jumps are larger than one.
Previous analysis guarantees that there exists a $\hat{u}\in\DD((0,T_1),H_2^\vr (\RR))\cap \DD([0,T_1);H_2^{\vr _0}(\RR))$, which solves
 \DEQSZ\label{eqn_mi1}
\lk\{ \barray d\hat{u} (t) &=& \lk( A\hat{u} (t) + f(\hat{u} (t))\rk) \, dt + \Sigma(\hat{u}(t))\, dB(t)
\\ &&{}+\int_{|z|<1}  G( t,\hat{u} (t^-),z) \tilde{\eta}(dz,dt)-\int_{|z|\geq1}  G( t,\hat{u} (t),z) \nu(dz)dt,\\
\hat{u} (0)&=& \hat{u}_0\in \mathfrak{X},\;\PP\;a.s.
\earray \rk.
\EEQSZ
 We follow interlacing criteria (see Theorem 2.5.1 in \cite{applebaum}) to construct the solution over whole interval $[0,T]$.

  Now we recursively construct the solution $u=u_n$ of \eqref{eqn_mild111} over whole interval $[0,T]$ as follows.
Define on $[0,T_1]$
\begin{equation}
u_1(t) =
\begin{cases}
\hat{u}(t) & \text{for $t<T_1$}\\
\hat{u}(T_1^-)+G(T_1^-,\hat{u}(T_1^-),\Delta P(T_1))  & \text{for $t=T_1$},
\end{cases}
\label{interlacing1}
\end{equation}
where $P(t)=\int_{|z|\geq 1}z\eta(dz,dt)$ is the compound Poisson process.
Now suppose that \\$\PP\left\{\omega\in\Omega: T_1<\infty\right\}=1$. Define $\bar{u}(0)=u_1(T_1)$, $\bar{B}(t)=B(T_1+t)$, $\bar{\eta}(.,t)=\eta(.,T_1+t)$ and $\bar{\mathscr{F}}_t=\mathscr{F}_{T_1+t}$. Let $\bar{P}(t)=\int_{|z|\geq 1}z\bar{\eta}(dz,dt)$ be the compound Poisson process which starts from time $T_1$.

Since we don't have jumps with size larger than one during the time interval $(T_1,T_2)$, from previous analysis there exists a solution  $\bar{u}(t-T_1)\in\DD((T_1,T_2),H_2^\vr (\RR))\cap \DD([T_1,T_2);H_2^{\vr _0}(\RR))$. Then,
\begin{equation}
u_2(t) =
\begin{cases}
u_1(t) & \text{for $t\leq T_1$}\\
\bar{u}(t-T_1) & \text{for $T_1 \leq t\leq T_2$}\\
\bar{u}((T_2-T_1)^-)+G((T_2-T_1)^-,\bar{u}((T_2-T_1)^-),\Delta \bar{P}(T_2))  & \text{for $t=T_2$}
\end{cases}
\label{interlacing2}
\end{equation}
\noindent Since we have a finite number of large jumps with size bigger than one over $[0.T]$ almost surely, by repeating the above process $n$ times, we can obtain $u=u_n\in\DD((0,T],H_2^\vr (\RR))\cap \DD([0,T];H_2^{\vr _0}(\RR))$ which solves \eqref{eqn_mild111}.
\end{proof}

\section{\levy  Copulas}\label{levy_copula}
\label{aB}

L\'evy copulas is a general concept to capture jump dependency in
multivariate L\'evy processes and is widely used in finance. In
this section, we only recall short facts about copulas, pair
copulas, L\'evy processes, and the L\'evy copula concept. Detailed
treatment of copulas and L\'evy copulas can be found in
\cite{cher,nelsen,mal} and \cite{tankov,tankov1,tankov2}.

Let $L_1$ and $L_2$ be two 
L\'evy processes with \levy measures $\nu_1$ and $\nu_2$. Before
introducing the L\'evy copulas, let us introduce the extended tail
integrals $U_1$ and $U_2$.

First, we need following function associated with any $z\in\RR\setminus\{0\}$:
\DEQS
\mathcal{I} (z) =\bcase (z,\infty)\,& z>0,
\\
(-\infty,z), & z<0.\ecase
\EEQS
In the same way as the distribution of a random vector can be represented by its distribution
function, the \levy  measure of a \levy  process will be represented by its tail integral.

Now, the tail integral of a $2$--dimensional process can be defined for $i=1,2$ by
\DEQSZ\label{tail11}
U_i(z)=\bcase  \sgn(z) \, \nu_i\lk( \mathcal{I}(z)\rk),&\mbox{ if }\quad z\in\RR\setminus\{0\},
\\ 0&\mbox{ if } z=\infty,
\ecase
\EEQSZ
and their generalized inverse, given by
$$ U^ {\leftarrow}_i (z) := \sup\{ x\ge 0\mid U_i(x)=z\}, \quad z\ge 0,\quad i=1,2.
$$

Dependence of jumps of a multivariate \levy process can be described
by a \levy copula which couples the marginal tail integrals. In particular,
let $L$ be a two dimensional  \levy process, $\nu$ is its intensity measure and $U$ is the tail integral defined by
\DEQSZ\label{tail111}
U(z)=\prod_{i=1}^2  \sgn(z_i) \, \nu\lk(\prod_{i=1}^2 \mathcal{I}(z_i)\rk),\quad z=(z_1,z_2)\in (\RR\setminus\{0\}\cup\{\infty\})^2.
\EEQSZ
Now, $L$ can be seen as  two  L\'evy processes linked together by the  mapping
$
H:\RR^2\to\RR,
$
defined as
$$
U(z_1, z_2) = H(U_1(z_1), U_2(z_2)),\quad z_1,z_2\in\RR\setminus\{0\} \cup\{\infty\}.
$$
For example, if $L_1$ and $L_2$ are independent positive \levy processes, the copula $H$ is given by (see \cite[Theorem 4.6]{tankov2})
$$
H_\perp(z_1,z_2) = z_1 1_{z_2=\infty}+ z_2 1_{z_1=\infty}, \quad z_1,z_2\in\RR^ +\cup\{\infty\}.
$$
If $L_1$ and $L_2$ are completely dependent, the copula $H$ is given by
$$
H_\parallel(z_1,z_2) = \min(|z_1|,|z_2|)1_K(z_1,z_2) \sgn(z_1)\sgn(z_2), \quad z_1,z_2\in\RR.
$$
where $K=\{ (z_1,z_2)\in \RR^ 2 : \sgn(z_1)=\sgn(z_2)\}$.

A Sklar  type Theorem (see \cite{tankov}) ensures the existence
and uniqueness of a \levy copula given a \levy process, and vice
versa. To be more precise, it says that for each $2$--dimensional
\levy process with intensity $\nu$ and  marginal \levy measures $\nu_i$, $i=1,2$,  
one can associate a \levy copula $H$ such that
\DEQSZ\label{eqn11}
U(z_1, z_2) = \sgn(z_1)\sgn(z_2)\, H(U_1(z_1), U_2(z_2)),\quad z_1,z_2\in \RR\setminus \{0\}\cup\{\infty\}. 
\EEQSZ
Here $U$ and $U_i$, $i=1,2$, denotes the tail integrals defined by \eqref{def_tail_n} and \eqref{tail1} respectively.

Conversely, if $H$ is a \levy copula and $U_1,U_2$ are marginal tail integrals
of two \levy processes, Equation \ref{eqn11} defines the tail integral
of a $2$-dimensional  \levy process, where $U_1, U_2$ are the
tail integrals of its components.

As an example, let us consider Clayton \levy  copula.

\begin{ex}\label{clayton}
For a 
$2$-dimensional \levy  processes
the Clayton copula is given on $\RR^2 $ by (see e.g.\ \cite{tankov1,tankov2})
\DEQSZ\label{clayton}
 H(z_1,z_2) =
\lk( \frac 12|z_1|^{-\theta}+\frac 12 |z_2|^{-\theta}\rk) ^{-\frac 1\theta}\lk( \beta 1_{z_1\cdot z_2>0} + (1-\beta) 1_{z_1\cdot z_2<0}\rk) ,\quad z_1,z_2\in\RR.
\EEQSZ
The parameter $\theta > 0$ determines the dependence of the jump sizes, where larger values
of $\theta$ indicate a stronger dependence, smaller values of $\theta$ indicate independence.
The parameter $\beta$ determines the dependence of the
sign of jumps: when $\beta=1$, the two components always jump in the same direction, and
when $\beta= 0$, positive jumps in one component are accompanied by negative jumps in the
other and vice versa. For intermediate values of $\beta$, positive jumps in one component can
correspond to both positive and negative jumps in the other component. The parameter
$\theta$ is responsible for the dependence of absolute values of jumps in different components.
\end{ex}

To give the connection between copulas and \levy copulas  let us
define the survival copula. Let $F:\RR^2\to [0,1]$ be a distribution
function and $\bar F (x,y) = 1-F(x,y)$. Let $F_1$ and $F_2$ be the
marginal distributions, $\bar F_1=1-F_1$ and $\bar F_2=1-F_2$ be
the marginal tail functions respectively. Now, one can define the survival copula associated to $F$ by
$$
\bar C(u,v) := \bar F(\bar F^ {-1}_1(u), \bar F^ {-1}_1(v)), \quad (u,v)\in [0,1]^ 2.
$$
Since $C( u,1)=u$ and $C(1,v)=v$, we get $\bar C(0,u)=u$ and $\bar C(v,0)=v$.

\subsection{Finite \levy  measure and copula}\label{Append 1}

For simplicity, let $L=(L_1,L_2)$ be a two dimensional  \levy process with only positive jumps and with marginal \levy measures  $\nu_1$, $\nu_2$ and copula $H$. Here, we assume that
$\nu_1$ and $\nu_2$ are two  \levy  measures with $\nu_1((0,\infty))=\lambda_1$, $\nu_2((0,\infty))=\lambda_2$. We also assume that $H$ is twice differentiable and $\nu_1$, $\nu_2$ have densities with respect to Lebesgue measure on $\RR\setminus\{0\}$.
We will consider only copula, such that $L_1$ and $L_2$ have only common jumps.

Let $(\CF^1_t)_{t\ge 0}$ be the filtration generated by $L_1$ and $(\CF^ 2_t)_{t\ge 0}$ the filtration generated by $L_2$.
We are interested in the jumps  of $L_1$ given the jumps of $L_2$.
Since
$$
\nu((z_1,\infty),(z_2,\infty))= H( U_1(z_1),U_2(z_2))
$$
it follows that
\DEQS
\nu(dz_1,dz_2) &=& {\partial^ 2 \over \partial u_1 \partial u_2} H(u_1,u_2)\Bigg|_{u_1=U_1 (z_1)\atop u_2=U_2 (z_2)}
\nu_1(dz_1)\, \nu_2(dz_2).
\EEQS
Substitution gives
\DEQS
\nu(\RR\times \RR)&=& \int_0^\infty \int_0^\infty \nu(dz_1,dz_2) = \int_0^{\lambda_1}  \int_0^{\lambda_2}  {\partial ^2 H(u_1,u_2)\over \partial u_1\,\partial u_2} \, du_1\, du_2
\\
&=& H(0,0) - H(\lambda_1,0)-H(0,\lambda_2) + H(\lambda_1,\lambda_2) 
\\
&=&  H(\lambda_1,\lambda_2) :=\lambda_H.
\EEQS
 Since $\nu_1$ and $\nu_2$ are finite,
it follows that $L(t)$ can be represented by
the following sum
$$
L(t) = \sum_{n=1} ^{N(t)} Y_n, 
$$
where $N=\{N(t):t\ge 0\}$ is a Poisson process with intensity
$\lambda_H$ and $\{ Y_n=(Y_{n,1},Y_{n,2})
:n\in\NN\}$ is a family of $\RR^2 $--valued independent random
variables with distribution function $ {\nu/\lambda_H} $.
\del{In addition, for $i=1,2$,
$N_i=\{N_i(t):t\ge 0\}$ are  Poisson processes
with intensity $\lambda_i$ and $\{Y_{n,i}^{\ind}:n\in\NN\}$ are families of independent
$\nu_i/\lambda_i$ distributed random variables.}
Calculating the Fourier transform one can easily see
\DEQS
\EE e^{i x L(t)} &=& \sum_{k=1}^\infty \EE\lk[ e ^{\sum_{n=1}^k i x Y_n}\mid N(t)=k\rk]\PP\lk( N(t)=k\rk)
\\
&=& \exp(-\lambda_H t) \sum_{k=1}^\infty \frac {(\lambda_H t)^k}{k!} \EE\lk[ e ^{i x Y_1}\rk] ^{k} =   \exp(-\lambda_H t)\,  \exp\lk( t\int_{\RR^2 } e^{ixy}{\nu(dx\times dy)}\rk)
\\
&=& \exp\lk( t\int_{\RR^2 } \lk( e^{ixy}-1\rk) {\nu(dx\times dy)}\rk)
.
\EEQS

We are
interested in the conditional distribution of the jumps in the first variable, given the jumps in the second variable,
 i.e.\ $Y_{n,1}$, given the projection onto
the second axis, i.e.\ $Y_{n,2}$.

If $\bar C$ is the survival copula of $Y_n$, i.e.\
$$
\bar C(u_1,u_2) =\bar F( \bar F_1^ {-1}(u_1),\bar F^ {-1}_2(u_2)),\quad u_1,u_2\in[0,1],
$$
with $\bar F_i(x_i)= U_i(x_i)/\lambda_i$, then
$$
\bar C(u_1,u_2) =\bar F( U_1^ {-1}(\lambda_1u_1),U_2^ {-1}(\lambda_2 u_2))=\frac 1{\lambda_H} U( U_1^ {-1}(\lambda_1u_1),U_2^ {-1}(\lambda_2 u_2))
$$
and, by the definition of the \levy copula $H$,
$$
\bar C(u_1,u_2) =\frac 1{\lambda_H} H(\lambda_1u_1,\lambda_2 u_2), \quad u_1,u_2\in[0,1].
$$

Fix $\ep>0$ and let us assume that we have a L\'evy measure with infinite activity and that we cut of all jumps whose projection onto one of the two axis
is smaller than $\ep$. Then we have
$$
\nu( (\ep,\infty)\times (\ep,\infty))= H(U^{-1}_1(\ep),U^{-1}_2(\ep))\big|_{u_1=U_1^{-1}(\ep)\atop u_2=U_2 ^{-1}(\ep)} = \bar C( \ep,\ep). 
$$
This gives us the scaling property
$$
\frac 1{\lambda} H(\lambda u_1,\lambda u_2)= H(u_1,u_2), \quad u_1,u_2\in\RR \setminus\{0\},
$$
for $\lambda= H(\ep,\ep)$.
\begin{prop}
Let us assume that $\lambda_1=\lambda_2=\lambda=H(\ep,\ep)$ and let us assume that the Copula $H$ satisfies the following
scaling property:
$$
\frac 1{\lambda} H(\lambda u_1,\lambda u_2)= H(u_1,u_2), \quad u_1,u_2\in\RR \setminus\{0\}.
$$
Let us define %
$$
h(u_1,u_2) := { {\partial ^ 2 \over \partial u_1\partial u_2} H(u_1,u_2) 
}\, . 
$$
Then, the conditional probability of $\Delta_t L_1$ given $\Delta_t L_2$ is represented by
$$
\PP\lk( \Delta_t L_1=z_1\mid \Delta_t L_2=z_2\rk) = \,
 h(u_1,u_2)\Big|_{u_1=U_1 (z_1)\atop u_2=U_2 (z_2)} \,{ \nu(dz_1)}.
$$
\end{prop}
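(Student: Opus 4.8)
The plan is to recognise $\Delta_t L$, on a jump time, as one draw from the normalised jump law $\nu/\lambda_H$ of the underlying compound Poisson process, and then to disintegrate that law against its second marginal. First I would recall the representation established just above the statement: since $\nu_1,\nu_2$ are finite, $L(t)=\sum_{n=1}^{N(t)}Y_n$, with $N$ a Poisson process of intensity $\lambda_H=H(\lambda_1,\lambda_2)$ and $\{Y_n=(Y_{n,1},Y_{n,2}):n\in\NN\}$ i.i.d.\ with law $\nu/\lambda_H$ on $\RR^2$. Conditionally on $t$ being a jump time, $\Delta_t L$ is then distributed as $Y_1$, so it suffices to produce the regular conditional distribution of $Y_{1,1}$ given $Y_{1,2}$ under $\nu/\lambda_H$.

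Next I would write the joint intensity in product form. Starting from the \levy-copula identity $U(z_1,z_2)=H(U_1(z_1),U_2(z_2))$ for $z_1,z_2>0$ and differentiating in both variables — legitimate because $H$ is twice differentiable and the $\nu_i$ are absolutely continuous, so the $U_i$ are monotone $C^1$ bijections onto the intervals $(0,\lambda_i)$ — one obtains, as recorded in Subsection~\ref{Append 1},
\[
\nu(dz_1,dz_2)=h\lk(U_1(z_1),U_2(z_2)\rk)\,\nu_1(dz_1)\,\nu_2(dz_2),\qquad h=\partial_{u_1}\partial_{u_2}H .
\]
Hence the law of $Y_1$ is $\lambda_H^{-1}\,h(U_1(z_1),U_2(z_2))\,\nu_1(dz_1)\,\nu_2(dz_2)$.

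Then I would compute the second marginal and take the quotient. Since $u_1=U_1(z_1)$ is a decreasing bijection of $(0,\infty)$ onto $(0,\lambda_1)$ transporting $\nu_1$ to Lebesgue measure, for each fixed $z_2$,
\[
\int_0^\infty h\lk(U_1(z_1),U_2(z_2)\rk)\,\nu_1(dz_1)=\int_0^{\lambda_1}\partial_{u_1}\partial_{u_2}H(u_1,u_2)\,du_1=\partial_{u_2}H(\lambda_1,u_2)-\partial_{u_2}H(0,u_2)
\]
evaluated at $u_2=U_2(z_2)$. Because $L_1$ and $L_2$ have only common jumps, the second marginal of $\nu$ is exactly $\nu_2$, which forces this integral to equal $1$ (equivalently $H(0,\cdot)\equiv 0$ and $H(\lambda_1,\cdot)=\mathrm{id}$), and in particular $\lambda_H=\nu(\RR^2)=\lambda_1=\lambda_2=\lambda$; here the hypothesis $\lambda_1=\lambda_2=\lambda=H(\ep,\ep)$ together with the scaling property guarantee that this balance is consistent. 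So $Y_{1,2}$ has law $\lambda_2^{-1}\nu_2(dz_2)$, and disintegrating $\nu/\lambda_H$ against it yields
\[
\PP\lk(\Delta_t L_1=z_1\mid\Delta_t L_2=z_2\rk)=\frac{\lambda_H^{-1}\,h(U_1(z_1),U_2(z_2))\,\nu_1(dz_1)\,\nu_2(dz_2)}{\lambda_2^{-1}\,\nu_2(dz_2)}=h\lk(U_1(z_1),U_2(z_2)\rk)\,\nu_1(dz_1),
\]
which is the asserted formula.

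There is no deep obstruction here; the points that need care are (i) the rigorous justification of the product factorisation of $\nu$ — i.e.\ the change of variables $u_i=U_i(z_i)$ together with the chain rule applied to $U=H(U_1,U_2)$, using that each $\nu_i$ is $\sigma$-finite with a Lebesgue density — and (ii) the passage from the formal density ratio to a genuine regular conditional distribution, which I would make precise by invoking the disintegration theorem for the measure $\nu/\lambda_H$ over its second marginal. Keeping the normalising constants $\lambda_H,\lambda_1,\lambda_2$ straight so that the factors $\lambda_H^{-1}$ and $\lambda_2^{-1}$ cancel is the one computational place where the standing hypotheses (finiteness, the calibration $\lambda=H(\ep,\ep)$, common jumps) must all be used together.
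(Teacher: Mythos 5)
Your argument is correct and reaches the same formula, but it is organized differently from the paper's proof. The paper computes the conditional law as the quotient $\frac{\partial^2_{z_1 z_2}\bar F(z_1,z_2)}{\partial_{z_2}\bar F_2(z_2)}$ for the normalized jump distribution $\bar F=\nu/\lambda_H$, rewrites $\bar F$ through its survival copula $\bar C$, and then uses the identification $\bar C(u_1,u_2)=\tfrac1\lambda H(\lambda_1 u_1,\lambda_2 u_2)$ — this is precisely where the scaling hypothesis and the calibration $\lambda_1=\lambda_2=\lambda$ enter, producing the factor $\lambda_2/\lambda=1$ at the end. You instead bypass the survival copula entirely: you differentiate $U=H(U_1,U_2)$ to get the product form $\nu(dz_1,dz_2)=h(U_1(z_1),U_2(z_2))\,\nu_1(dz_1)\,\nu_2(dz_2)$ and disintegrate $\nu/\lambda_H$ against its second marginal, checking the normalization by the change of variables $u_1=U_1(z_1)$ and the identity $\int_0^{\lambda_1}\partial_{u_1}\partial_{u_2}H(u_1,u_2)\,du_1=1$. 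That identity is not automatic for a \levy copula at the finite level $\lambda_1$ (only $H(\infty,\cdot)=\mathrm{id}$ holds in general); it is exactly the marginal-consistency condition coming from the standing assumption of this subsection that $L_1$ and $L_2$ have only common jumps, which you correctly invoke — so your route trades the explicit use of the scaling property for an explicit use of the common-jumps/marginal condition. What your version buys is a cleaner separation between the analytic step (the product factorization of $\nu$) and the probabilistic step (disintegration of the jump law), and it makes transparent why the conditional kernel is already normalized; the paper's version buys a computation that generalizes verbatim to the $\sigma$-finite case treated in the next subsection, where the scaling property is genuinely needed to relate the survival copula of the truncated jump law to $H$.
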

\begin{proof}
 The formula can be shown by direct calculations. In particular,
\DEQS
\PP\lk( \Delta_t L_1=z_1\mid \Delta_t L_2=z_2\rk)  &=& { \PP\lk( \{\Delta_t L_1=z_1\}\cap \{ \Delta_t L_2=z_2\}\rk)\over \PP(\{\Delta_t L_2=z_2\})}
\\
 = { {\partial ^ 2 \over \partial z_1\partial z_2} \bar F \lk(z_1,z_2\rk) \over {\partial  \over \partial z_2} \bar F (0,z_2) }
&=&{ {\partial ^ 2 \over \partial z_1\partial z_2} \bar F \lk(z_1,z_2\rk) \over {\partial  \over \partial z_2} \bar F_2 (z_2) }
\\
 = { {\partial ^ 2 \over \partial z_1\partial z_2} \bar C \lk(F_1(z_1),F_2(z_2)\rk) \over {\partial  \over \partial z_2} \bar F (0,z_2) }
&=&\frac 1 {\lambda}{ {\partial ^ 2 \over \partial z_1\partial z_2} H \lk(\lambda_1F_1(z_1),\lambda_2F_2(z_2)\rk)  \over {\partial  \over \partial z_2} \bar F_2 (z_2) }
.
\EEQS
Substituting $\bar F_i(x_i)= U_i(x_i)/\lambda_i$  we get
\DEQS
\ldots &=& { \lambda_2\over \lambda}\, { h(u_1,u_2)  \Big|_{u_1=U_1 (z_1)\atop u_2=U_2 (z_2)}{\partial  \over \partial z_1} U_1 (z_1) {\partial  \over \partial z_2} U_2 (z_2)
\over {\partial  \over \partial z_2} U_2 (u_2) }
\\ &=& { \lambda_2\over \lambda} \, { h(u_1,u_2)  \Big|_{u_1=U_1 (z_1)\atop u_2=U_2 (z_2)}} \, \nu_1(dz_1)
 .\hspace{3cm}
 \EEQS
\end{proof}

\subsection{Copula and $\sigma$--finite L\' evy measures}\label{Append 2}

Let us assume that the $\nu_1$ and $\nu_2$ are two \levy  measures
with infinite measure.

Let $\nu$ be a $\sigma$-finite \levy measure and $L$ the corresponding \levy process. Here we consider $L$ with only positive jumps.
Cutting off the jumps smaller than $\ep$, the corresponding \levy process $L^ \ep$
can be written as follows.
$$
L^ \ep := \sum_{i=1} ^{N_\ep (t)} Y_{i,\ep},
$$
where $N_\ep$ is a Poisson point process with parameter $\nu(\RR^2_+ \setminus (0,\ep)\times (0,\ep))$ and
$\{ Y_{i,\ep}:i\in\NN\}$ are independent identical distributed random variables with survival function
\DEQSZ\label{barF}
\bar F_{\ep} (x,y) = { U \lk(x,y\rk)\over U(\ep,\ep)},\quad x,y\ge \ep.
\EEQSZ

Now, the aim is to express the survival copula of the two dimensional random variable $Y_{i,\ep}$
by the \levy copula $H$ and vice versa.
The survival copula $\bar C_\ep$ of $Y_{i,\ep}$   is given by
$$
\bar C_\ep (u,v) = \bar F_\ep ( \bar F_{1,\ep}^ {-1} (u), \bar F_{2,\ep}^ {-1} (u)),\quad u,v\in [0,1].
$$
Since
$$
 \bar F_{i,\ep}(x) = { U_{i,\ep}(x)\over U_i(\ep)},\quad i=1,2, 
 $$
 where $U_{i,\ep}(x)=\nu_i([x,\infty))$ for $x\geq\ep$.
It follows that
$$
 \bar F_{i,\ep}^ {-1}(u)= U_{i,\ep}^{-1}(   U_i(\ep) u),\quad u\in [0,1],\, i=1,2.
 $$
Therefore
$$
\bar C_\ep (u,v) = \bar F_\ep \lk(  U_{1,\ep}^{-1}(   U_1(\ep) u),U_{2,\ep}^{-1}(   U_2(\ep) v)\rk).
$$
Next, \eqref{barF} implies that
$$
\bar C_\ep (u,v) =  { U\lk(  U_{1,\ep}^{-1}(   U_1(\ep)
u),U_{2,\ep}^{-1}(   U_2(\ep) v)\rk)\over U\lk(  \ep,\ep \rk)}.
$$
Finally, by the definition of $H$ we get
$$
\bar C_\ep (u,v) =  { H\lk(    U_1(\ep) u,U_2(\ep) v\rk)\over H\lk(  U_1(\ep),U_2(\ep )\rk)}
$$
In case $\nu_1=\nu_2$, we get by the scaling property of the Clayton copula (see Definition \ref{clayton})
\DEQS \bar C_\ep (u,v) =  {     U_1(\ep)
\over U_1 (\ep) \, H\lk(  1,1\rk)} H(u,v)= H(u,v).
\EEQS
This means that the survival copula $\bar C_\ep$ is given by $H$.

\begin{prop}
Let us assume the copula satisfies the following scaling property
\DEQSZ\label{scaling1}
 H(\alpha u_1,\alpha u_2)= \alpha H(u_1,u_2), \quad u_1,u_2\in\RR.
\EEQSZ
Let us define %
$$
h(u_1,u_2) := { {\partial ^ 2 \over \partial u_1\partial u_2} H(u_1,u_2)
 }\, . 
$$
Then the conditional probability of
$ Y_{1,\ep}$ given $Y_{2,\ep}$ is
 \DEQS
\PP\lk( Y_{1,\ep}=z_1\mid Y_{2,\ep}=z_2\rk)  & = &h(u_1,u_2)\Big|_{u_1=U_1 (z_1)\atop u_2=U_2 (z_2)} \,{ \nu(dz_1)},
 \EEQS
 for the case where $\nu_1=\nu_2$.
\end{prop}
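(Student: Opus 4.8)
The plan is to repeat, almost verbatim, the computation carried out for finite \levy measures in Appendix~\ref{Append 1}, with the survival copula $\bar C$ there replaced by the \levy copula $H$ here. The key preliminary observation --- already established in the lines immediately preceding the statement --- is that, under the scaling property \eqref{scaling1} together with $\nu_1=\nu_2$, the survival copula $\bar C_\ep$ of the two-dimensional jump $(Y_{1,\ep},Y_{2,\ep})$ of the truncated process $L^\ep$ coincides with $H$ itself, and in particular does not depend on $\ep$.

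First I would write the conditional law of the jump size as the ratio of the densities of the joint and the marginal survival functions,
\DEQS
\PP\lk( Y_{1,\ep}=z_1\mid Y_{2,\ep}=z_2\rk)
&=& \frac{\PP\lk( \{Y_{1,\ep}=z_1\}\cap\{Y_{2,\ep}=z_2\}\rk)}{\PP\lk(\{Y_{2,\ep}=z_2\}\rk)}
= \frac{\frac{\partial^2}{\partial z_1\partial z_2}\bar F_{\ep}(z_1,z_2)}{\frac{\partial}{\partial z_2}\bar F_{2,\ep}(z_2)},
\EEQS
which is legitimate since $\nu_1,\nu_2$ are assumed to admit densities with respect to the Lebesgue measure on $\RR\setminus\{0\}$, so that $\bar F_{\ep}$ and $\bar F_{i,\ep}$ are absolutely continuous in each argument, and $H$ is twice differentiable by hypothesis.

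Next I would substitute $\bar F_{\ep}(z_1,z_2)=H\lk(\bar F_{1,\ep}(z_1),\bar F_{2,\ep}(z_2)\rk)$ and $\bar F_{i,\ep}(x)=U_{i,\ep}(x)/U_i(\ep)$, apply the chain rule, and use the scaling property to pull the $\ep$-dependent constants out of $h=\partial_{u_1}\partial_{u_2}H$: differentiating $H(\alpha u,\alpha v)=\alpha H(u,v)$ twice yields $h(\alpha u,\alpha v)=\alpha^{-1}h(u,v)$, which together with $\nu_1=\nu_2$ (so that $U_1(\ep)=U_2(\ep)$ and $U_{i,\ep}(z)=U_i(z)$ for $|z|\ge\ep$) causes every $\ep$-dependent normalization to cancel between numerator and denominator. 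Evaluating the marginal of $H$ by means of $H(\infty,v)=v$ --- exactly as in the finite case --- one is left with $h\big(U_1(z_1),U_2(z_2)\big)\,U_1'(z_1)\,dz_1 = h(u_1,u_2)\big|_{u_1=U_1(z_1),\,u_2=U_2(z_2)}\,\nu_1(dz_1)$, which is the asserted identity and is manifestly independent of $\ep$ (this last fact is what allows $\nu_{1,z_2}(dz_1)=h(U_1(z_1),U_2(z_2))\,\nu_1(dz_1)$ to be used as the conditional \levy measure in Theorems~\ref{copula_finite} and~\ref{copula_inf}).

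I expect the main difficulty to be purely a matter of bookkeeping rather than anything conceptual: one must track the sign/orientation conventions in the definition of the tail integrals $U_i$ (so that $U_i'(z)\,dz$ is correctly identified with $\pm\nu_i(dz)$), check that the $u_2$-derivative of $H\lk(\bar F_{1,\ep}(z_1),\bar F_{2,\ep}(z_2)\rk)$ in the limit $z_1\to\infty$ reproduces $\partial_{z_2}\bar F_{2,\ep}(z_2)$, and confirm that each factor of $U_1(\ep)=U_2(\ep)$ genuinely drops out --- this last point being precisely where the hypotheses $\nu_1=\nu_2$ and the scaling property \eqref{scaling1} are indispensable. Once these routine checks are in place the argument is word for word that of Appendix~\ref{Append 1}.
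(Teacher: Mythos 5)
Your proposal is correct and follows essentially the same route as the paper: write the conditional law as the ratio of the joint and marginal survival densities, substitute $\bar F_\ep = H(\bar F_{1,\ep},\bar F_{2,\ep})$ (using that $\bar C_\ep = H$ when $\nu_1=\nu_2$), apply the chain rule, and invoke the scaling property to remove the $\ep$-dependent normalizations. Your explicit observation that \eqref{scaling1} forces $h(\alpha u,\alpha v)=\alpha^{-1}h(u,v)$, so that all factors of $U_i(\ep)$ cancel exactly between numerator and denominator, is in fact a cleaner accounting than the paper's own computation, which carries a Clayton-specific constant through the final lines.
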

\begin{proof}
The formula can be shown by direct calculations.
In particular, we can argue along the following lines
\DEQS
\PP\lk( Y_{1,\ep}=z_1\mid Y_{2,\ep}=z_2\rk)  = { \PP\lk( \{Y_{1,\ep} =z_1\}\cap \{ Y_{2,\ep} =z_2\}\rk)\over \PP(\{Y_{2,\ep}=z_2\})}
 = { {\partial ^ 2 \over \partial z_1\partial z_2} \bar F_\ep \lk(z_1,z_2\rk) \over {\partial  \over \partial z_2} \bar F _\ep(0,z_2) }.
 \EEQS
 Owing to the following equalities
 \DEQS
  {\partial  \over \partial z_2} \bar F _\ep(0,z_2)=  {\partial  \over \partial z_2} \bar F _{2,\ep}(z_2)={{\partial  \over \partial z_2} U_2(z_2)\over U_2(\ep)}={\nu_2(z_2)\over U_2(\ep)},
  \EEQS
and the scaling property \eqref{scaling1} we get
 \DEQS
\lqq{ \PP\lk( Y_{1,\ep}=z_1\mid Y_{\ep,2}=z_2\rk) }&&
\\& = &{ {\partial ^ 2 \over \partial u_1\partial u_2}  H \lk(u_1,u_2\rk)\Big|_{u_1=\bar F_{\ep,1} (z_1)\atop u_2=\bar F_{\ep,2} (z_2)}
 }
{ {\partial \over \partial z_1} \bar F_{1,\ep}(z_1){\partial \over \partial z_2} \bar F_{2,\ep}(z_2)\over {\partial \over \partial z_2} \bar F_{2,\ep}(z_2)}
\\
& =& 2^ {1/\theta}{ {\partial ^ 2 \over \partial u_1\partial u_2}  H \lk(u_1,u_2\rk)\Big|_{u_1=U_1 (z_1)/U_1(\ep)\atop u_2=U_2 (z_2)/U_2(\ep)}
 } {\partial \over \partial z_1} \bar F_{1,\ep}(z_1)
\\
&= & 2^ {1/\theta}h(u_1,u_2)\Big|_{u_1=U_1 (z_1)\atop u_2=U_2 (z_2)} \,{ \nu_1(dz_1)}.
 \EEQS
\end{proof}

\begin{ex}
As mentioned in example \ref{clayton}, the Clayton copula is given by
$$
H(u_1,u_2) = \lk( \frac 12 u^ {-\theta}_1+\frac 12  u^ {-\theta}_2\rk) ^ {-\frac 1\theta} \beta 1_{u_1u_2>0}, \quad u_1,u_2\ge 0.
$$
A short calculation shows  that for $i=1,2$
$$
{\partial H(u_1,u_2)\over \partial u_i} =  \frac 12  \lk( \frac 12 u^ {-\theta}_1+\frac 12 u^ {-\theta}_2\rk) ^ {-\frac 1\theta-1} \, u_i^ {-\theta-1}
$$
and
$$
{\partial^2  H(u_1,u_2)\over \partial u_1 \partial u_2} = \frac 14  \lk(1+ \theta\rk)
\lk( \frac 12 u^ {-\theta}_1+\frac 12  u^ {-\theta}_2\rk) ^ {-\frac 1\theta-2} \, u_1^ {-\theta-1}u_2^ {-\theta-1}.
$$
Therefore
\DEQS
h(u_1,u_2) &=& \frac 14  {  \lk(1+\theta\rk)   \lk( \frac 12 u^ {-\theta}_1+\frac 12 u^ {-\theta}_2\rk) ^ {-\frac 1\theta-2} \, u_1^ {-\theta-1}u_2^ {-\theta-1}
  }\, ,
\EEQS
which implies that
 \DEQS \lqq{ \PP\lk( \Delta L_1(t)=z_1\mid \Delta L_2(t)
=z_2\rk)} &&
\\ & =&\frac 14  {  \lk(1+ \theta\rk)   \lk(\frac 12  u^ {-\theta}_1+\frac 12 u^ {-\theta}_2\rk) ^ {-\frac 1\theta-2} \, u_1^ {-\theta-1}
 u_2^ {-\theta-1}  }\,\Big|_{u_1=U_1(z_1)\atop u_2=U_2(z_2)} \nu_1(dz_1)
.
\EEQS

\end{ex}

\section{Application of \levy-Upward Theorem}\label{levy_upward}
\label{aC}

 Before we start our main theorem of this section, we will illustrate the following remark which is useful to complete the proof of the main theorem of this section.

\begin{rem}
 Let $(\Omega,\CA,\mu)$ be a measure space and $\CL\subset \CA$. We say that
$\CL$ is a lattice, if $\CL$ is closed under countable unions and intersections, and $\emptyset,\,\Omega\in \CL$.
 Let $\CL^ c:=\{ A\in \CA, \Omega\setminus A\in\CL\}$. By the definition of the $\sigma$--algebra, we know
that if $\CL$ is a $\sigma $--algebra, then   $\CL$ is also a lattice and $\CL^c=\CL$. Therefore, in case $\CL$ is a $\sigma$--algebra,
Theorem 3.1 in  \cite {Tim} reads:
$$
\EE_\gamma\lk[\frac 1X\mid \CL\rk]= \lk(\EE_\mu\lk[ X\mid \CL\rk]\rk)^ {-1},
$$
with $\gamma(A)=\int_{A}X(\omega)\mu(d\omega)$ and $X$ is a square integrable random variable.
\end{rem}
Using Theorem 3.1 of \cite{Tim} and the L\'evy's upward Theorem we can show the following Theorem. 
\begin{thm}\label{mainC}
Let $V=\{V(t):t\ge 0\}$ be a solution to equation  \eqref{eqn-vnoep}
and $V^\ep=\{V^\ep(t):t\ge 0\}$, $\ep\in(0,1]$, be  the family of a solutions to    \eqref{eqn-vep}. Let $\{\Gamma_\ep:\ep\in(0,1)\}$  be a family  of uniformly integrable 
 stochastic processes. Fix $p=1$ or $2$.
In particular, for any $t\ge 0$ the family $\{|\Gamma_\ep (t)|^ {4p}: \ep\in(0,1]\}$ is uniformly integrable and
$\lim_{\ep \to 0} \Gamma_\ep(t)=\Gamma(t)$, $\QQ$--a.s.
Then,  we have $\QQ$--a.s. and in $L^ 1(\Omega;\RR)$ 
\DEQSZ\label{levyconvergence}
\lim_{\ep\to 0}  \EE^ {\QQ}  \bigl| \EE^ {\QQ_\ep} \lk[\Gamma_  \ep(t) V^  \ep (t)\mid \CY_t^\ep\rk]\bigr|^ p = \EE^ {\QQ} \lk[ \Gamma(t) V (t)\mid \CY_t\rk] \bigr|^p,
\EEQSZ
where  \DEQS { d \QQ\over
d\QQ_\eps}\Big|_{\CF_t}&=&\frac{ V^\eps (t)}{ V(t)},\quad t\ge 0.\EEQS
\end{thm}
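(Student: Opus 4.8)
The plan is to push everything onto the fixed reference measure $\QQ$, rewrite the target as a ratio of $\QQ$-conditional expectations with respect to the increasing family $\{\CY_t^\ep\}$, and then combine L\'evy's upward theorem with the uniform-integrability hypotheses on $\Gamma_\ep$ and Novikov-type bounds on $V^\ep$. \emph{Step 1 (reduction to $\QQ$).} Since $d\QQ/d\QQ_\ep|_{\CF_t}=V^\ep(t)/V(t)$ and $\CY_t^\ep\subset\CF_t$, the abstract Bayes formula — equivalently Theorem 3.1 of \cite{Tim} as recalled in the Remark above, applied with $\mu=\QQ_\ep$, $X=V^\ep(t)/V(t)$, $\gamma=\QQ$ and $\CL=\CY_t^\ep$ — gives
\[
\EE^{\QQ_\ep}\!\big[\Gamma_\ep(t)\,V^\ep(t)\mid\CY_t^\ep\big]
=\frac{\EE^{\QQ}\!\big[\Gamma_\ep(t)\,V(t)\mid\CY_t^\ep\big]}{\EE^{\QQ}\!\big[V(t)/V^\ep(t)\mid\CY_t^\ep\big]}=:\frac{N_\ep}{D_\ep},\qquad
\frac1{D_\ep}=\EE^{\QQ_\ep}\!\big[V^\ep(t)/V(t)\mid\CY_t^\ep\big].
\]

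\emph{Step 2 (convergence of the ingredients).} By the stability statements used in the proof of Theorem \ref{copula_inf}, $X^\ep\to X$ uniformly on compacts $\QQ$-a.s.; since $g\in C_b^{(2)}$ this forces $V^\ep(t)\to V(t)$ $\QQ$-a.s., hence $V(t)/V^\ep(t)\to1$ and $V^\ep(t)/V(t)\to1$ $\QQ$-a.s. As $g$ is bounded, the stochastic exponentials $V^\ep(t),V(t)^{\pm1},V^\ep(t)^{\pm1}$ are bounded, uniformly in $\ep$, in every $L^q$ (Novikov; this is the estimate behind Remark \ref{xiinverse}), so $\{V(t)/V^\ep(t)\}$ and $\{V^\ep(t)/V(t)\}$ are uniformly integrable and converge to $1$ also in $L^1$. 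By hypothesis $\Gamma_\ep(t)\to\Gamma(t)$ $\QQ$-a.s.\ with $\{|\Gamma_\ep(t)|^{4p}\}$ uniformly integrable; combined with $V(t)\in\bigcap_{q<\infty}L^q(\QQ)$, H\"older shows $\{\Gamma_\ep(t)V(t)\}$ is bounded in $L^r(\QQ)$ for some $r>1$, so $\Gamma_\ep(t)V(t)\to\Gamma(t)V(t)$ $\QQ$-a.s.\ and in $L^1(\QQ)$.

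\emph{Step 3 (L\'evy's upward theorem).} From $\CY_t^\ep=\sigma\{Y_r:0\le r\le t,\ \ep\le|\Delta L_2(r)|<\infty\}$ one checks $\CY_t^\ep\uparrow\CY_t$ as $\ep\downarrow0$ (lowering $\ep$ adjoins more of the path, and in the limit the continuous part of $Y$ together with all jumps of $L_2$ generate $\CY_t$, while $Y^\ep\to Y$ a.s.\ gives the reverse inclusion). For a fixed integrand, L\'evy's upward theorem gives $\EE^{\QQ}[\,\cdot\mid\CY_t^\ep]\to\EE^{\QQ}[\,\cdot\mid\CY_t]$ $\QQ$-a.s.\ and in $L^1$; for the varying, $L^1$-convergent integrands of Step 2 one concludes from the splitting
\[
\EE^{\QQ}[\xi_\ep\mid\CY_t^\ep]-\EE^{\QQ}[\xi\mid\CY_t]=\EE^{\QQ}[\xi_\ep-\xi\mid\CY_t^\ep]+\big(\EE^{\QQ}[\xi\mid\CY_t^\ep]-\EE^{\QQ}[\xi\mid\CY_t]\big),
\]
bounding $|\EE^{\QQ}[\xi_\ep-\xi\mid\CY_t^\ep]|\le\EE^{\QQ}[\sup_{\delta\le\ep}|\xi_\delta-\xi|\mid\CY_t^\ep]$ and letting $\ep\downarrow0$ (a Hunt-type argument, the integrability of the $\sup$ again coming from the uniform $L^q$-bounds). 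Applied to $N_\ep$ and $D_\ep$ this yields, $\QQ$-a.s.\ and in $L^1(\QQ)$, $N_\ep\to\EE^{\QQ}[\Gamma(t)V(t)\mid\CY_t]$ and $D_\ep\to\EE^{\QQ}[1\mid\CY_t]=1$.

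\emph{Step 4 (conclusion and the hard point).} Since $D_\ep>0$ $\QQ$-a.s.\ and $D_\ep\to1$, the ratio $N_\ep/D_\ep=\EE^{\QQ_\ep}[\Gamma_\ep(t)V^\ep(t)\mid\CY_t^\ep]$ converges $\QQ$-a.s.\ to $\EE^{\QQ}[\Gamma(t)V(t)\mid\CY_t]$. For the $L^1$-assertion, Jensen gives $\big|\EE^{\QQ_\ep}[\Gamma_\ep(t)V^\ep(t)\mid\CY_t^\ep]\big|^{p(1+\delta)}\le\EE^{\QQ_\ep}[\,|\Gamma_\ep(t)V^\ep(t)|^{p(1+\delta)}\mid\CY_t^\ep]$; taking $\QQ$-expectation with $d\QQ/d\QQ_\ep=V^\ep/V$ and using Cauchy--Schwarz with the uniform $L^2$-bound on $V^\ep/V$ and the $L^{4p}$-type bound on $\Gamma_\ep V^\ep$, one sees $\big\{\big|\EE^{\QQ_\ep}[\Gamma_\ep(t)V^\ep(t)\mid\CY_t^\ep]\big|^p\big\}$ is bounded in $L^{1+\delta}(\QQ)$ for small $\delta>0$ — which is exactly why the hypothesis is posed with the $4p$-th moment. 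With the $\QQ$-a.s.\ convergence just obtained, Vitali's theorem gives $\big|\EE^{\QQ_\ep}[\Gamma_\ep(t)V^\ep(t)\mid\CY_t^\ep]\big|^p\to\big|\EE^{\QQ}[\Gamma(t)V(t)\mid\CY_t]\big|^p$ in $L^1(\QQ)$, i.e.\ \eqref{levyconvergence}. The main obstacle is Step 3: both the integrand and the conditioning $\sigma$-algebra vary with $\ep$, so one needs the Hunt-type strengthening of L\'evy's upward theorem together with uniform-in-$\ep$ control of the densities $V^\ep/V$ and of $\Gamma_\ep$ — precisely what forces the Novikov/moment estimates of Step 2 and the strong uniform-integrability assumption on $\Gamma_\ep$.
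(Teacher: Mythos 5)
Your overall route is the same as the paper's: reduce to the reference measure via the abstract Bayes formula and Robertson's identity (Theorem 3.1 of \cite{Tim}) to write the $\QQ_\ep$-conditional expectation as the ratio $N_\ep/D_\ep$, control $1/D_\ep$ and the densities $V^\ep/V$ uniformly in $\ep$ through the boundedness of $g$, treat the numerator by L\'evy's upward theorem along the increasing family $\CY_t^\ep\uparrow\CY_t$, and finish with a uniform-integrability argument for the $L^1$ statement. Steps 1, 2 and 4 are in substance identical to the paper's estimates \eqref{specialupward}--\eqref{epbound}.

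There is, however, one genuine gap, precisely at the point you flag as the hard one. In Step 3 you bound
$\bigl|\EE^{\QQ}[\xi_\ep-\xi\mid\CY_t^\ep]\bigr|\le\EE^{\QQ}\bigl[\sup_{\delta\le\ep}|\xi_\delta-\xi|\,\big|\,\CY_t^\ep\bigr]$
and assert that the integrability of the supremum ``again comes from the uniform $L^q$-bounds.'' This does not follow: uniform boundedness (or even uniform integrability) of a family $\{|\xi_\delta|^q\}$ gives no control on $\sup_\delta|\xi_\delta|$ — one can have $\sup_n\EE|\xi_{1/n}|^2=1$ while $\EE\sup_n|\xi_{1/n}|=\infty$ — and the hypotheses of the theorem only provide uniform integrability of $\{|\Gamma_\ep(t)|^{4p}\}$, not integrability of $\sup_\ep|\Gamma_\ep(t)|$ (nor, for an uncountable index set, even its measurability). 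As written, the Hunt-type inequality is therefore unjustified and the dominated-convergence step that follows it collapses. The paper avoids this by a different mechanism: it truncates $\Gamma_t^\ep$ at a level $R$ chosen via the uniform integrability of $\{|\Gamma_t^\ep|^{2p}\}$ so that the tails contribute less than $\kappa/2$ uniformly in $\ep$, bounds the truncated difference crudely by $R^{2p-1}$ times its $L^1$-norm, and only then invokes the upward theorem on the bounded truncated quantities (see \eqref{epzero}). Replacing your supremum bound by this truncation argument repairs the step and brings your proof into line with the paper's.
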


\begin{proof}
Apply the Kallianpur-Striebel formula to get
\DEQSZ\label{specialupward}
&&  \EE^ {\QQ}  \bigl| \EE^ {\QQ_\ep} \lk[\Gamma_  \ep(t) V^  \ep (t)\mid \CY_t^\ep\rk] - \EE^ {\QQ} \lk[ \Gamma(t) V (t)\mid \CY_t\rk] \bigr|^p
\\
&& {} = \EE^ {\QQ}  \Bigl|\, \frac{\EE^ {\QQ} \lk[\Gamma_  \ep(t) V (t)\mid \CY_t^\ep\rk]}{\EE^ {\QQ} \lk[\frac{ V (t)}{ V^\eps(t)}\mid \CY_t^\ep\rk]} - \EE^ {\QQ} \lk[ \Gamma(t) V (t)\mid \CY_t\rk] \,\Bigr|^p
\nonumber
\\
&\le&  2^{p-1}\EE^ {\QQ}  \Bigl|\,  \frac{\EE^ {\QQ} \lk[\Gamma_  \ep(t) V (t)\mid \CY_t^\ep\rk]}{\EE^ {\QQ} \lk[\frac{ V (t)}{ V^\eps(t)}\mid \CY_t^\ep\rk]} -  \frac{\EE^ {\QQ} \lk[\Gamma(t) V (t)\mid \CY_t\rk]}{\EE^ {\QQ} \lk[\frac{ V (t)}{ V^\eps(t)}\mid \CY_t^\ep\rk]}\,\Bigr|^p
\nonumber
\\
&+&  2^{p-1}\EE^ {\QQ}  \Bigl|\frac{\EE^ {\QQ} \lk[\Gamma(t) V (t)\mid \CY_t\rk]}{\EE^ {\QQ} \lk[\frac{ V (t)}{ V^\eps(t)}\mid \CY_t^\ep\rk]} - \EE^ {\QQ} \lk[ \Gamma(t) V (t)\mid \CY_t\rk] \Bigr|^p.
\nonumber
\EEQSZ
The H\"older inequality gives
\DEQS
\ldots  && {} \le  2^{p-1}\Bigl( \EE^ {\QQ}  \Bigl| \frac{1}{\EE^ {\QQ} \lk[\frac{ V (t)}{ V^\eps(t)}\mid \CY_t^\ep\rk]} \, \Bigr|^{2p}\Bigr)^ \frac 12 \lk(  \EE^ {\QQ}  \Bigl| \EE^ {\QQ} \lk[\Gamma_  \ep(t) V (t)\mid \CY_t^\ep\rk] -  \EE^ {\QQ} \lk[\Gamma(t) V (t)\mid \CY_t\rk]\, \Bigr|^{2p}\rk) ^ \frac 12
\\
&&{}+2^{p-1}\Bigl( \EE^ {\QQ}  \Bigl| \frac{  \EE^ {\QQ} \lk[\Gamma(t) V (t)\mid \CY_t\rk]}{\EE^ {\QQ} \lk[\frac{ V (t)}{ V^\eps(t)}\mid \CY_t^\ep\rk]} \,\Bigr|^{2p}\, \Bigr)^ \frac 12 \Bigl(\, \EE^ {\QQ}  \Bigl| \EE^ {\QQ} \lk[\frac{ V (t)}{ V^\eps(t)}\mid \CY_t^\ep\rk] -  1\, \Bigr|^{2p}\, \Bigr)^\frac 12 .
\EEQS
Now we will show that for $\ep\to 0$, the first term in last inequality converges  to zero. First, we will show that there exists a constant $C>0$ such that
$$
\EE^ {\QQ}  \lk| \frac{1}{\EE^ {\QQ} \lk[\frac{ V (t)}{ V^\eps(t)}\mid \CY_t^\ep\rk]} \rk|^{2p}<C,\quad \ep\in(0,1].
$$
 By Theorem 3.1 in  \cite {Tim}, Jensen's inequality and H\"older inequality we get

\DEQSZ\label{epbound}
\EE^ {\QQ}  \lk| \frac{1}{\EE^ {\QQ} \lk[\frac{ V (t)}{ V^\eps(t)}\mid \CY_t^\ep\rk]} \rk|^{2p}& =&  \EE^ {\QQ}  \lk| \EE^ {\QQ_\ep} \lk[\frac{ V^\eps (t)}{ V(t)}\mid \CY_t^\ep\rk] \rk|^{2p}
\nonumber\\&\le&   \EE^ {\QQ} \lk( \EE^ {\QQ_\ep} \lk[\lk| \frac{ V^\eps (t)}{ V(t)} \rk|^{2p}\mid \CY_t^\ep\rk]\rk)
\le   \EE^ {\QQ_\ep}\lk( \frac{ V^\eps (t)}{ V(t)} \EE^ {\QQ_\ep} \lk[\lk| \frac{ V^\eps (t)}{ V(t)} \rk|^{2p}\mid \CY_t^\ep\rk]\rk)
\nonumber\\&\le&   \lk(\EE^ {\QQ_\ep}\lk| \frac{ V^\eps (t)}{ V(t)} \rk|^2\rk)^\frac 12 \lk( \EE^ {\QQ_\ep}\lk| \frac{ V^\eps (t)}{ V(t)} \rk|^{4p}\rk)^\frac 12
 \nonumber\\&=&  \lk(\EE^ {\QQ}\lk| \frac{ V^\eps (t)}{ V(t)} \rk|\rk)^\frac 12  \lk(\EE^ {\QQ}\lk| \frac{ V^\eps (t)}{ V(t)} \rk|^{4p-1}\rk)^\frac 12. 
\EEQSZ
To see that the last terms are bounded, first, note that $V^ {-1}=Z$ where $Z$ solves \eqref{zsolves}. Due to the fact that $g$ is bounded,  $Z$ has bounded moments of order $8p-2$.
In addition, for any $t\ge 0$, $V(t)$ and $V^ \eps(t)$ have  also uniform bounds  of order $8p-2$.
Hence, we conclude the RHS above is uniformly for all $\ep>0$ bounded.

\medskip

Next,  we would like to show that
\DEQSZ\label{epz}
\lim_{\ep\to 0} \EE^ {\QQ}  \lk| \EE^ {\QQ} \lk[\Gamma_  \ep(t) V (t)\mid \CY_t^\ep\rk] -  \EE^ {\QQ} \lk[\Gamma(t) V (t)\mid \CY_t\rk]\rk|^{2p}=0.
\EEQSZ

  For the notational  convenient, take $\tilde \Gamma_t^\ep= \Gamma_  \ep(t) V (t)$ and  $\tilde \Gamma_t^0= \Gamma(t) V (t)$. For fixed positive $R>0$ (the exact value of $R$ we will fix later) we get
\DEQSZ\label{epzero}
\EE^ {\QQ}  \lk| \EE^ {\QQ} \lk[\Gamma_t^\ep\mid \CY_t^\ep\rk] -  \EE^ {\QQ} \lk[ \Gamma_t^0\mid \CY_t\rk]\rk|^{2p}& \le&2^{p-1}\EE^ {\QQ}  \lk| \EE^ {\QQ} \lk[\Gamma_t^\ep\textbf{1}_{|\Gamma_t^\ep|\le R}\mid \CY_t^\ep\rk] -  \EE^ {\QQ} \lk[ \Gamma_t^0\textbf{1}_{|\Gamma_t^0|\le R}\mid \CY_t\rk]\rk|^{2p}
\nonumber\\&+&   2^{p-1}\EE^ {\QQ}  \lk| \EE^ {\QQ} \lk[\Gamma_t^\ep\textbf{1}_{|\Gamma_t^\ep|> R}\mid \CY_t^\ep\rk] -  \EE^ {\QQ} \lk[ \Gamma_t^0\textbf{1}_{|\Gamma_t^0|> R}\mid \CY_t\rk]\rk|^{2p}
\nonumber\\& \le&2^{p-1}\EE^ {\QQ}  \lk| \EE^ {\QQ} \lk[\Gamma_t^\ep\textbf{1}_{|\Gamma_t^\ep|\le R}\mid \CY_t^\ep\rk] -  \EE^ {\QQ} \lk[ \Gamma_t^0\textbf{1}_{|\Gamma_t^0|\le R}\mid \CY_t\rk]\rk|^{2p}
\nonumber\\&+&  2^{2p-2} \EE^ {\QQ}  \lk|\Gamma_t^\ep\textbf{1}_{|\Gamma_t^\ep|> R}\rk|^{2p} +  2^{2p-2}\EE^ {\QQ} \lk| \Gamma_t^0\textbf{1}_{|\Gamma_t^0|> R}\rk|^{2p}.
\EEQSZ
The last inequality holds due to the Jensen's inequality.
Since for any $t\ge 0$, the family $\{|\Gamma_t^\ep|^ {2p}:\ep\in(0,1]\}$ is uniformly integrable,  for any $\kappa>0$ there exist a
number $R>0$ such that for all $\ep\in(0,1]$,
$$ \EE^ {\QQ}  \lk|\Gamma_t^\ep\textbf{1}_{|\Gamma_t^\ep|> R}\rk|^{2p}<\frac{\kappa}{4}
$$
 and
$$  \EE^ {\QQ} \lk| \Gamma_t^0\textbf{1}_{|\Gamma_t^0|> R}\rk|^{2p}<\frac{\kappa}{4}.
$$
Let $R>0$ be fixed.
First,
\DEQS
\lqq{ \EE^ {\QQ}  \lk| \EE^ {\QQ} \lk[\Gamma_t^\ep\textbf{1}_{|\Gamma_t^\ep|\le R}\mid \CY_t^\ep\rk] -  \EE^ {\QQ} \lk[ \Gamma_t^0\textbf{1}_{|\Gamma_t^0|\le R}\mid \CY_t\rk]\rk|^{2p}} &&
\\
&&\le
R^ {2p-1} \EE^ {\QQ}  \lk| \EE^ {\QQ} \lk[\Gamma_t^\ep\textbf{1}_{|\Gamma_t^\ep|\le R}\mid \CY_t^\ep\rk] -  \EE^ {\QQ} \lk[ \Gamma_t^0\textbf{1}_{|\Gamma_t^0|\le R}\mid \CY_t\rk]\rk|.
\EEQS
By the \levy--Upward Theorem (see p.\ 196 in \cite{Dembo}), there exist a number $\ep_1>0$,
such that  for all $\ep\in(0,\ep_1]$,
$$
R^ {2p-1}\EE^ {\QQ}  \lk| \EE^ {\QQ} \lk[\Gamma_t^\ep\textbf{1}_{|\Gamma_t^\ep|\le R}\mid \CY_t^\ep\rk] -  \EE^ {\QQ} \lk[ \Gamma_t^0\textbf{1}_{|\Gamma_t^0|\le R}\mid \CY_t\rk]\rk|<\frac{\kappa}{2}.
$$
This implies that  for all $\ep\in (0,\ep_1]$,
$$
\EE^ {\QQ}  \lk| \EE^ {\QQ} \lk[\Gamma_t^\ep\mid \CY_t^\ep\rk] -  \EE^ {\QQ} \lk[ \Gamma_t^0\mid \CY_t\rk]\rk|^{2p}<\kappa.
$$
This gives Claim \eqref{epz}. Combining results \eqref{epbound} and \eqref{epz}, implies that  the first term in last inequality of \eqref{specialupward} goes to zero as $\ep\to 0$. It remains to show
$$\Bigl|\EE^ {\QQ} \lk[\frac{ V (t)}{ V^\eps(t)}\mid \CY_t^\ep\rk] -  1\, \Bigr|^{2p} \rightarrow 0\,\mbox{ as } \ep\to 0.$$
By similar arguments we can  prove that
the term above also converges
to zero as $\ep\to 0$, which gives the assertion.
\end {proof}

\section*{Acknowledgements}
{The authors are very thankful to the anonymous referee for his/her insightful comments and remarks, which improve the manuscript. We are also very grateful to Professor Dan Crisan for his valuable suggestions and comments for improving the manuscript. This work was supported by the Austrian Science foundation (FWF), Projectnumber
P17273-N12.}

%


 \end{document}